\begin{document}

 \baselineskip 16.6pt
\hfuzz=6pt

\widowpenalty=10000

\newtheorem{cl}{Claim}
\newtheorem{theorem}{Theorem}[section]
\newtheorem{proposition}[theorem]{Proposition}
\newtheorem{coro}[theorem]{Corollary}
\newtheorem{lemma}[theorem]{Lemma}
\newtheorem{definition}[theorem]{Definition}
\newtheorem{assum}{Assumption}[section]
\newtheorem{example}[theorem]{Example}
\newtheorem{remark}[theorem]{Remark}
\renewcommand{\theequation}
{\thesection.\arabic{equation}}

\def\SL{\sqrt H}

\newcommand{\mar}[1]{{\marginpar{\sffamily{\scriptsize
        #1}}}}

\newcommand{\as}[1]{{\mar{AS:#1}}}

\newcommand\R{\mathbb{R}}
\newcommand\RR{\mathbb{R}}
\newcommand\CC{\mathbb{C}}
\newcommand\NN{\mathbb{N}}
\newcommand\ZZ{\mathbb{Z}}
\def\RN {\mathbb{R}^n}
\renewcommand\Re{\operatorname{Re}}
\renewcommand\Im{\operatorname{Im}}

\newcommand{\mc}{\mathcal}
\newcommand\D{\mathcal{D}}
\def\hs{\hspace{0.33cm}}
\newcommand{\la}{\alpha}
\def \l {\alpha}
\newcommand{\eps}{\varepsilon}
\newcommand{\pl}{\partial}
\newcommand{\supp}{{\rm supp}{\hspace{.05cm}}}
\newcommand{\x}{\times}
\newcommand{\lag}{\langle}
\newcommand{\rag}{\rangle}

\newcommand\wrt{\,{\rm d}}
\newcommand{\botimes}{\bar{\otimes}}

\title[]{Sharp endpoint $L_p$ estimates of quantum Schr\"{o}dinger groups}

\author{Zhijie Fan}
\address{Zhijie Fan, Department of Mathematics, Wuhan University, }
\email{ZhijieFan@whu.edu.cn}

\author{Guixiang Hong}
\address{Guixiang Hong, Department of Mathematics, Wuhan University, }
\email{guixiang.hong@whu.edu.cn}

\author{Liang Wang}
\address{Liang Wang, Department of Mathematics, Wuhan University, }
\email{wlmath@whu.edu.cn}


  \date{\today}
 \subjclass[2010]{42B37, 35J10,  47F05}
\keywords{Sharp estimates, Schr\"odinger groups, Noncommutative $L_p$ spaces, P-metrics, High-cancellation BMO spaces}

\begin{abstract}
In this article, we establish sharp endpoint $L_p$ estimates of Schr\"odinger groups on general measure spaces which may not be equipped with good metrics but admit submarkovian semigroups satisfying purely algebraic assumptions. One of the key ingredients of our proof is to introduce and investigate a new noncommutative high-cancellation BMO space by constructing an abstract form of P-metric codifying some sort of underlying metric and position. This provides the first form of Schr\"odinger group theory on arbitrary von Neumann algebras and can be applied to many models, including Schr\"odinger groups associated with non-negative self-adjoint operators satisfying purely Gaussian upper bounds on doubling metric spaces, standard Schr\"odinger groups on quantum Euclidean spaces, matrix algebras and group von Neumann algebras with finite dimensional cocycles.
\end{abstract}

\maketitle


\tableofcontents

\section{Introduction}
\subsection{Background and motivation}
Schr\"{o}dinger equation is a basic equation in quantum mechanic. Denote by $\Delta$ the Laplacian operator on the Euclidean space $\mathbb{R}^n$ given by $\Delta=\sum_{i=1}^n\partial_{x_i}^2$, then the solution of initial value  problem for Schr\"{o}dinger equation
\begin{eqnarray} \label{Sch}
\left\{
\begin{array}{ll}
    i { \partial_t u (x,t)  } -\Delta u(x, t)=0, \quad x\in \mathbb{R}^n,\ \  t>0,  \\[5pt]
 u(x, 0)=f(x),
\end{array}
\right.
\end{eqnarray}
with initial value $f\in L_2(\mathbb{R}^n)$ can be represented as
$$u(x,t)=e^{-it\Delta}f(x).$$
It can be seen from H\"{o}rmander's result \cite{MR121655} that the family of solution operators $(e^{-it\Delta})_{t\in\mathbb R}$, also called Schr\"odinger group, are bounded on $L^p(\mathbb{R}^n)$ only when $p=2$. For general $1<p<\infty$, based on the proceeding inspiring work \cite{MR324249,MR0234133,MR270219}, Miyachi \cite{MR573335,MR633000} showed that the operator $e^{-it\Delta}$  is bounded from the Sobolev space $W_p^{2s}(\mathbb{R}^n)$ to $L_p(\mathbb{R}^n)$ for $s\geq \sigma_p:=n|1/p-1/2|$ with the {\it optimal} bound $C(1+|t|)^{\sigma_p}$ (see also  \cite{MR447953} due to Fefferman and Stein). Equivalently, the  operator $e^{-it\Delta}(I-\Delta)^{-s}$ is bounded on $L_p(\mathbb{R}^n)$. This result is said to be an {\it endpoint} $L_p$-estimate since such a mapping property does not hold for $0<s<\sigma_p$. The proof of the result depends heavily on the theory of Hardy spaces and the Fourier transform estimate of Fourier multiplier $e^{it|\cdot|^2}$---the symbol of the operator $e^{-it\Delta}$.

Motivated by further applications to partial differential equations and geometric analysis, harmonic analysis associated to generalized semigroups, or equivalently generalized Laplacians, such as Laplace-Beltrami operators, divergence forms and Schr\"odinger operators {\it etc.}, has been developing rapidly in the past twenty years (see  \cite{MR1933726,MR2163867,MR2162784,MR2868142} and the references therein). In particular, over the last decade, the $L_p$-regularity problem for the Schr\"odinger groups $e^{itL}$ associated to a large class of self-adjoint operators $L$ has attracted extensive attention. Let $L$ be a non-negative self-adjoint operator on $L_2(X)$, where $X$ is a metric measure space  with a distance function $d$, a measure  $\mu$ and satisfying
 the doubling property:
\begin{align}\label{doubling}
\mu(B(x,\lambda r))\leq C\lambda^{n}\mu(B(x,r))
\end{align}
 for some $C, n>0$ independent of $\lambda\geq 1$, $r>0$ and $x\in X$. Here we denote by $B(x,r)$ the ball centred at $x$ with radius $r$. The smallest $n$ so that the above inequality valid is called the dimension of $X$. Then by spectral theory, the solution of Schr\"{o}dinger equation \eqref{Sch} with $-\Delta$ replaced by $L$ and
with initial value $f\in L_2(X)$ can be represented as
$$u(x,t)=e^{itL}f(x).$$
After several hard attempts \cite{MR2301958,MR4082915, MR1930609, MR3556060, MR1275402,MR1317282,  MR2124040}, Chen--Duong--Li--Yan \cite{MR4150932} established the sharp endpoint  $L_p$-regularity estimate of $e^{itL}$ for a large class of $L$.  Specifically, they used the Littlewood-Paley inequalities and a variant of Fefferman-Stein's sharp maximal function to show that if the heat kernel $p_t(x,y)$ associated with $L$ satisfies
$m$-order Gaussian upper bound
\begin{equation*}
 \label{GE}
 \tag{${\rm GE}_m$}
|p_t(x,y)| \leq {C\over \mu(B(x,t^{1/m}))} \exp\left(-c \, {  \left({d(x,y)^{m}\over    t}\right)^{1\over m-1}}\right)
\end{equation*}
for some constants $m\geq 2$ and $C,c>0$ independent of $t>0$ and $x,y\in X$, then for any $1<p<\infty$,
$$\|e^{itL}(I+L)^{-s}f\|_{L_p(X)}\leq C(1+|t|)^{\sigma_p}\|f\|_{L_p(X)},\ {\rm for}\ {\rm any}\ s\geq \sigma_p.$$
See also \cite{2022On,CDLY,MR4150932,MR4371094,MR4182975} for further progresses.

On the other hand, inspired by operator algebras, noncommutative martingale inequalities and quantum information theory, there have appeared several fundamental work on harmonic analysis associated to quantum semigroups $(e^{-tL})_{t\geq0}$  (see e.g. \cite{MR3920831,MR2885593,MR2265255,MR2276775,MR2327840}). Moreover, this part of noncommutative harmonic analysis plays an instrumental role in Fourier analysis on group algebras (see e.g. \cite{MR3283931,MR3776274,10.1215/00127094-2021-0042}). However, regarding the Schr\"odinger group $(e^{itL})_{t\in\mathbb R}$---an analytic extension of quantum semigroup to imaginary time---on the truly noncommutative measure spaces, there seems no result in the existing literature. The reason behind might be that the key ingredients such as the Fourier transform/kernel estimates like \eqref{GE}, Hardy space theory and the sharp maximal function techniques {\it etc.} are not available or do not admit perfect analogues in the noncommutative setting.

The main task of this article is to investigate the mapping properties of $e^{itL}$ for a fixed $t$ and thus to provide the $L_p$-regularity estimate for solution of the following Schr\"{o}dinger equation  on general noncommutative measure spaces:
 \begin{eqnarray}\label{nSch}
\left\{
\begin{array}{ll}
    i { \partial_t u (t)  } +  Lu(t)=0, \  t>0,  \\[5pt]
 u( 0)=f\in L_2(\mathcal{M}),
\end{array}
\right.
\end{eqnarray}
where $\mathcal{M}$ is semifinite von Neumann algebra with a normal semifinite faithful trace $\tau$ and $L$ is a non-negative self-adjoint operator on $L_2(\mathcal{M})$, the noncommutative $L_2$ space over $\mathcal{M}$. We refer the reader to \cite{PX} for more terminologies from noncommutative integration theory.
It is worthwhile to mention that the $L_p$-regularity estimate is one of the most important estimates for the regularity problem of solution of Schr\"{o}dinger equation \eqref{Sch}, while a generalization to the noncommutative setting can be regarded as a starting point for the research of the subsequent related problems, including noncommutative local smoothing estimate and noncommutative Schr\"{o}dinger maximal inequality, both of which lie in the central topics of modern harmonic analysis even when going back to the commutative setting (see for example \cite{MR3702674,MR3961084,MR4432280,MR2629687}).



%
%
In order to achieve our goal, we first observe that even in the Euclidean setting, the target operator $(I+L)^{-\sigma_p}e^{itL}$  is a strong singular integral operator beyond Calder\'{o}n-Zygmund framework (see for example \cite{MR4439910,MR719660,MR4390220}).
Over the last twenty decades, lots of significant progress toward noncommutative singular integral operator theory has been achieved \cite{MR4320770,MR3178596,MR4220746,MR2666903,MR3283931,MR4178915,MR2496770,MR2476951}. Among these results, we would like to highlight a pioneering work due to Junge--Mei--Parcet--Xia \cite{MR4178915}, who first introduced a noncommutative form of Calder\'{o}n-Zygmund theory under algebraic framework. Specifically, in the lack of good metrics, they established the $L_p$-boundedness of algebraic Calder\'{o}n-Zygmund operators on semi-finite von Neumann algebra admitting a Markov semigroup which only satisfies purely algebraic assumptions. This result covers many concrete examples in both classical setting and noncommutative setting. Besides, it is also particularly interesting for measure spaces with poor geometric information.

Although our target operator is a class of singular integral, it is a strong type beyond Calder\'{o}n-Zygmund framework,  which means that Calder\'{o}n-Zygmund theory cannot be applied directly to this operator. Instead, the extra oscillation information about $e^{itL}$ should be dug out and the singularity of the target operator should be handled  more delicately. Inspired by the establishment of algebraic Calder\'{o}n-Zygmund theory, we will introduce the first form of Schr\"{o}dinger group theory which is valid for general semi-finite von Neumann algebras under purely algebraic framework.


\subsection{Main result}
A semigroup $\mathcal{S}=(S_t)_{t\geq0}$ over $\mathcal{M}$ is a family of operators $S_t:\mathcal{M}\rightarrow\mathcal{M}$ such that $S_0=id_\mathcal{M}$ and $S_tS_s=S_{t+s}$ for any $t,s\geq 0$. It is said to be a bounded semigroup if it is uniformly bounded and satisfies the following properties:

(i) each $S_t$ is a weak-$*$ continuous and completely positive map;

(ii) each $S_t$ is self-adjoint in the sense that
\begin{align}\label{symmetry}
\tau(S_t(f)g)=\tau(f S_t(g)),\ \ {\rm for\ all}\ f,g\in\mathcal{M}\cap L_1(\mathcal{M});
\end{align}

(iii) the map $t\rightarrow S_t$ is a pointwise weak-$*$ continuous map.

A bounded semigroup $\mathcal{S}$ is said to be submarkovian if each $S_t$ is a contractive map. Besides, a submarkovian semigroup $\mathcal{S}$ is said to be Markovian if it satisfies an additional unital property, i.e. $S_t(1)=1$.   Throughout the article, unless we mention the contrary, we will assume that the underlying noncommutative measure space $(\mathcal{M},\tau)$ comes equipped with a bounded semigroup $\mathcal{S}=(S_t)_{t\geq 0}$.  Note that $\mathcal{S}$ admits an infinitesimal negative generator
$$Af=\lim_{t\rightarrow 0}\frac{S_t(f)-f}{t}$$
defined on $dom(A)=\cup_{1\leq p\leq \infty}dom_p(A)$, where $dom_p(A)$ is given by
\begin{align*}
dom_p(A)=\left\{f\in L_p(\mathcal{M}):\lim_{t\rightarrow 0}\frac{S_t(f)-f}{t}\ {\rm converges}\ {\rm in}\ L_p(\mathcal{M})\right\}.
\end{align*}
It can be verified easily that for any $s>0$ and $f\in L_p(\mathcal{M})$, one has $\frac{1}{s}\int_0^s S_t(f)dt\in dom_p(A)$, which means that $dom_p(A)$ is dense in $L_p(\mathcal{M})$ for $p<\infty$, and is weak-$*$ dense in $\mathcal{M}$ for $p=\infty$. We denote $A_p$ by the restriction of $A$ on $dom_p(A)$.
It follows from \eqref{symmetry} that $L:=-A$ is a non-negative self-adjoint operator acting on $L_2(\mathcal{M})$. By the spectral theorem, for any bounded Borel function $F:[0,\infty)\rightarrow \mathbb{C}$, one can define an $L_2(\mathcal{M})$-bounded operator $F(L)$ given by
$$F(L)=\int_0^\infty F(\lambda)dE_L(\lambda),$$
where $E_L(\lambda)$ is the projection-valued measure supported on the spectrum of $L$. By this spectral decomposition, the solution of noncommutative Schr\"{o}dinger equation \eqref{nSch} can be represented as
$$u(t)=e^{itL}f,\ {\rm for}\ {\rm any}\ t>0.$$

In order to establish an algebraic Schr\"{o}dinger group theory as general as possible, many difficulties occur due to the non-commutativity and the lack of geometric concepts. To the best of our knowledge, even in the Euclidean setting, there are only two methods in the literatures to establish the $L_p$ estimate of Schr\"{o}dinger groups. The first one is to show the $L_p$-boundedness directly, which relies heavily on the $L_p$-boundedness of sharp maximal function \cite{MR4150932}. Another one is to  establish the $H^1\rightarrow L_1$ boundedness of targeted operator $e^{itL}(I+L)^{-n/2}$ for appropriate Hardy space first \cite{CDLY}, which relies on atom (or molecule) decomposition of Hardy space, and then use complex interpolation with the trivial $L_2$ boundedness. However, in the general non-commutative setting, these approaches are not valid. To overcome these difficulties, our strategy is to establish the $L_\infty\rightarrow BMO$ boundedness of $e^{itL}(I+L)^{-n(1+iy)/2}$ for appropriate BMO space directly without using its duality with the corresponding Hardy space, which is a complete new proof even in the most classical setting. The implementation of this strategy can be divided into three steps:

\  {\bf Step 1:} Construct a `P-metric' and show that the associated metric-type BMO space can be embedded into semigroup BMO space.

We shall introduce a notion called P-metric, which codifies some sort of underlying metric and position in $(\mathcal{M},\tau)$. This concept is a modification of the Markov metric introduced in \cite{MR4178915}, but it contains certain extra 'position' information of the underlying metric in $(\mathcal{M},\tau)$, so we call this a P-metric. It is determined by a family $$\mathcal{Q}=\{(R_{j,t},\sigma_{j,t}):j\in\mathbb{N},t>0\},$$
where each $R_{j,t}:\mathcal{M}\rightarrow \mathcal{M}$ is a normal completely positive unital map satisfying $R_{j,t}(f)=f$ for $f\in ker A_\infty$, the fixed-point subspace of the semigroup $\mathcal{S}$, and $\sigma_{j,t}$ are elements of the von Neumann algebra $\mathcal{M}$ such that the estimates below hold for some $m\geq 2$:
\begin{align*}
&{\rm (i)}\ semigroup\ majorization:\ S_t(|\xi|^2)\leq \sum_{j\geq 0}\sigma_{j,t}^*R_{j,\sqrt[m]{t}}(|\xi|^2)\sigma_{j,t},\forall\ t>0\ {\rm and}\ \xi\in\mathcal{M};\\
&{\rm (ii)}\ average\ domination\ condition:\ \big\|R_{j,t}(|\xi|^2)\big\|_{\mathcal{M}}\lesssim \big\|R_{t}(|\xi|^2)\big\|_{\mathcal{M}},\forall j\in\mathbb{N},t>0\ {\rm and}\ \xi\in\mathcal{M};\\
&{\rm (iii)}\ metric\ integrability\ condition:\ k_{\mathcal{Q}}:=\sup\limits_{t>0}\Big\|\sum_{j\geq 0}|\sigma_{j,t}|^2\Big\|_\mathcal{M}^{1/2}<\infty.
\end{align*}
Here and in what follows, in order to simplify the notation, we set $R_t(f):=R_{0,t}(f)$ for any $f\in\mathcal{M}$. By constructing concrete P-metric in specific examples, we shall explain more in Section \ref{Hilbert modules} and Section \ref{Appi} what motivates our definition.

Let
\begin{align*}
&\|f\|_{{ BMO}_{\mathcal{S}}^c(\mathcal{M})}:=\sup_{t> 0}\big\|S_t|(I-S_t)f|^2\big\|_{\mathcal{M}}^{1/2}
\end{align*}
and $\|f\|_{{BMO}_{\mathcal{S}}(\mathcal{M})}:=\max\big\{\|f\|_{{ BMO}_{\mathcal{S}}^c(\mathcal{M})},\|f^*\|_{{BMO}_{\mathcal{S}}^c(\mathcal{M})}\big\}$. Inspired by \cite{MR2885593,MR4178915,MR2469026}, the semigroup type BMO space ${ BMO}_{\mathcal{S}}(\mathcal{M})$ is defined as the weak-$*$ closure of $\mathcal{M}$ in certain direct sum of Hilbert modules determined by $\mathcal{S}$ (see Section \ref{semigroupBMO} for more details). $BMO_\mathcal{S}(M)$ interpolates with $L_p(\mathcal{M})$ provided $\mathcal{S}$ is a Markov semigroup admitting a Markov dilation (see Section \ref{lpbod}).  Given a P-metric associated to this semigroup and $M\in\mathbb{Z}_+$, we define metric-type BMO (semi-)norm of $f\in\mathcal{M}$ as follows:
$$\|f\|_{BMO_{\mathcal{Q}}(\mathcal{M})}=\max\big\{\|f\|_{BMO_{\mathcal{Q}}^c(\mathcal{M})},\|f^*\|_{BMO_{\mathcal{Q}}^c(\mathcal{M})}\big\},$$
\begin{align*}
\|f\|_{BMO_{\mathcal{Q}}^c(\mathcal{M})}:=\sup_{t> 0}\big\|R_{\sqrt[m]{t}}|(I-S_t)f|^2\big\|_{\mathcal{M}}^{1/2}.
\end{align*}
Then it is not difficult to see that $BMO_{\mathcal{Q}}^c(\mathcal{M})\subset BMO_{\mathcal{S}}^c(\mathcal{M})$ (see Lemma \ref{embed} for more details).

\  {\bf Step 2:} Provide suitable operator conditions on $L$ such that metric-type BMO space coincides with  high-cancellation metric-type BMO space.

In order to ensure all the definitions in our abstract framework are well-defined, we will impose a priori assumption that for any function $F$ mapping $[ 0,\infty)$ to  $\mathbb{C}$ and having bounded derivatives at any order, the spectral multiplier $F(L):\mathcal{A}_\mathcal{M}\rightarrow \mathcal{A}_\mathcal{M}$ for some weak-$*$ dense subalgebra $\mathcal{A}_\mathcal{M}\subset \mathcal{M}$ such that $\mathcal{A}_\mathcal{M}$ is dense in $L_p(\mathcal{M})$ for all $1\leq p<\infty$. One shall see in the body paragraph that this assumption holds trivially for many non-commutative models and can be relaxed in the semi-commutative setting that we shall consider.

Motivated by some classical ideas initially originated from \cite{MR2868142,MR2481054}, we shall introduce a non-commutative high-cancellation BMO space in this article, which has not been studied in the previous literature in the field of non-commutative analysis. Given a P-metric associated to this semigroup and $M\in\mathbb{Z}_+$, we define $M$-order metric-type BMO (semi-)norm of $f\in\mathcal{M}$ as follows:
$$\|f\|_{BMO_{\mathcal{Q},M}(\mathcal{M})}=\max\big\{\|f\|_{BMO_{\mathcal{Q},M}^c(\mathcal{M})},\|f^*\|_{BMO_{\mathcal{Q},M}^c(\mathcal{M})}\big\},$$
\begin{align*}
\|f\|_{BMO_{\mathcal{Q},M}^c(\mathcal{M})}:=\sup_{t> 0}\big\|R_{\sqrt[m]{t}}|(I-S_t)^Mf|^2\big\|_{\mathcal{M}}^{1/2}.
\end{align*}
We will show that under suitable algebraic, analytic and topological conditions (explained more in Step 3), for any $f\in\mathcal{A}_\mathcal{M}$,
$$\|f\|_{BMO_{\mathcal{Q},1}(\mathcal{M})}\simeq \|f\|_{BMO_{\mathcal{Q},M}(\mathcal{M})}.$$
This self-improved property plays a crucial role in establishing spectral multiplier theorem associated with operators of various types (e.g. H\"{o}rmander-type spectral multipliers \cite{MR2747017}, Schr\"{o}dinger groups multipliers \cite{MR4150932}) since it endows BMO function a higher cancellation property in the low frequency.

\  {\bf Step 3:} Provide suitable operator conditions on $L$ to establish the $L_\infty\rightarrow BMO$ boundedness for high-cancellation metric-type BMO space.

Inspired by the idea of introducing Markov metric in \cite{MR4178915}, the commutative idea behind the notion of P-metric is also to find pointwise majorants of the heat kernels of semigroups $\mathcal{S}$, such that $S_t$ can be dominated by a sum of averaging operators $R_{j,\sqrt[m]{t}}$ over a distinguished family of algebraic balls `with radius $2^{j}\sqrt[m]{t}$'. In the noncommutative setting, the maps $R_{t}$ (resp. $R_{j,t}$) must be averages over certain projections $q_{t}$ (resp. $q_{j,t}$). It is too technical to explain this concept in full generality at this point of our article. However, the authors may first care about a non-trivial model example given by
$$R_tf=\Big((id\otimes \tau)(q_t)\Big)^{-\frac{1}{2}}(id\otimes\tau)\big(q_t(1\otimes f)q_t\big)\Big((id\otimes \tau)(q_t)\Big)^{-\frac{1}{2}}$$
for certain increasing family of projections ${\bf q}:=\{q_t\}_{t\geq 0}\in\mathcal{M}\botimes\mathcal{M}$ determined later on.

As it happens in classical Schr\"{o}dinger group theory and algebraic Calder\'{o}n-Zygmund theory \cite{MR4178915}, one need to impose some additional but appropriate conditions (which will be split into algebraic, analytic  and topological conditions) on the P-metric to connect closely with the underlying (noncommutative) measure. We would like to just mention at this stage that the imposed algebraic conditions are very similar to that imposed in the algebraic Calder\'{o}n-Zygmund theory, which are inherent to noncommutativity and hold trivially in the commutative settings. The analytic one provides a strong doubling condition, which is equivalent to the doubling condition \eqref{doubling} in the setting of metric measure space (see Section \ref{Operator-valued setting}) but is stronger than the doubling condition in the general framework since it can reflect extra covering property.  For the model example mentioned in the previous paragraph, the strong doubling condition states that
 there exist constants $C,n>0$, $D\geq 0$, such that for any $r_1\geq r_2\geq r_3$ and $f\in\mathcal{M}$,
\begin{align*}
\Bigg\|(id\otimes \tau)\bigg(\bigg|(1\otimes f)q_{r_1}(id\otimes \tau)(q_{r_3})^{-\frac{1}{2}}\bigg|^{2}\bigg)\Bigg\|_{\mathcal{M}}\leq C\left(\frac{r_1}{r_2}\right)^{D}\left(\frac{r_1}{r_3}\right)^{n}\Bigg\|(id\otimes \tau)\bigg(\bigg|(1\otimes f)q_{r_2}(id\otimes \tau)(q_{r_2})^{-\frac{1}{2}}\bigg|^{2}\bigg)\Bigg\|_{\mathcal{M}}.
\end{align*}
In particular, with the choices $f=1$ and $r_1=r_2$, this implies the doubling condition:
$$\left\|\Big((id\otimes \tau)(q_{r_3})\Big)^{-\frac{1}{2}}\Big((id\otimes \tau)(q_{r_1})\Big)\Big((id\otimes \tau)(q_{r_3})\Big)^{-\frac{1}{2}}\right\|_{\mathcal{M}}\leq C\left(\frac{r_1}{r_3}\right)^{n}.$$
Instead, the smallest constant $n=n_{\bf q}$ such that the above inequality valid is called the {\it P-metric dimension} of $\mathcal{M}$ which, together with an appropriate choice of $q$, goes back to the homogeneous dimension of $X$ when $\mathcal{M}=L_\infty(X)$. The topological conditions provide certain natural continuity of projections and amplification map.


Next we impose operator conditions on $L$.  In the model example, our operator condition is just an $L_2$-Gaussian estimate, which can be regarded as an `integral-type' replacement of Gaussian upper bound \eqref{GE} and  is stated as follows:
%


There exist constants $C,c>0$ and $m\geq 2$ such that for any $t>0$, $k\geq 1$, $0<r_1\leq r_2$ and $f\in\mathcal{M}\botimes \mathcal{M}$,
\begin{align}\label{preoff}
&\left\|(id \otimes \tau)\Big|(id\otimes S_t)(fb_{k,r_2})q_{r_1} \Big((id\otimes \tau)(q_{r_1})\Big)^{-\frac{1}{2}}\Big|^2\right\|_\mathcal{M}^{\frac{1}{2}}\nonumber\\
&\leq CE_k\left(-c\left(\frac{r_2}{t^{1/m}}\right)^{\frac{m}{m-1}}\right)\Big\|(id \otimes \tau)\Big|fb_{k,r_2} \Big((id\otimes \tau)(q_{\max\{r_1,t^{1/m}\}})\Big)^{-\frac{1}{2}}\Big|^2\Big\|_{\mathcal{M}}^{\frac 12},
\end{align}
where
$$ b_{k,r}=\left\{\begin{array}{ll}q_{2r}, &{\rm if}\ k=1,\\q_{2^{k}r}-q_{2^{k-1}r}, &{\rm if}\ k\geq 2,\end{array}\right.$$
$$ E_{k}(x)=\left\{\begin{array}{ll}1, &{\rm if}\ k=1,\\ \exp(2^{\frac{mk}{m-1}}x), &{\rm if}\ k\geq 2.\end{array}\right.$$

In condition (2), $b_{k,r}$ and $q_r$ play the roles of indicators with respect to the algebraic annuli with radius $2^kr$ and algebraic balls with radius $r$, respectively. In the Euclidean setting, this condition is a simple consequence of \eqref{GE} (see Section \ref{Operator-valued setting} for more details). In the above text, we just formulate our conditions for special amplification algebra model $\mathcal{M}\botimes\mathcal{M}$, but our general conditions will include many more amplification algebras other than this model, which will be specific in Section \ref{aao}. With these more general conditions, our main result can be formulated as follows.
\begin{theorem}\label{main}
Let $(\mathcal{M},\tau)$ be a semifinite von Neumann algebra equipped with a bounded semigroup $\mathcal{S}=(e^{-tL})_{t\geq 0}$ with associated P-metric $\mathcal{Q}$ fulfilling our algebraic, analytic and topological assumptions. Then there exists a constant $C=C(n)>0$ such that for any $f\in \mathcal{A}_\mathcal{M}$,
\begin{align*}
\left\|e^{itL}(I+L)^{-s}f\right\|_{{ BMO}_{\mathcal{S}}(\mathcal{M})}\leq C(1+|t|)^{n/2}\|f\|_{\mathcal{M}},\ {\rm for}\ s\geq \frac{n}{2}.
\end{align*}
If in addition $\mathcal{S}$ is a Markovian semigroup admitting a Markov dilation, then for any $1<p<\infty$,
\begin{align*}
\left\|e^{itL}(I+L)^{-s}f\right\|_{L_{p}(\mathcal{M})}\leq C(1+|t|)^{\sigma_p}\|f\|_{L_p(\mathcal{M})},\ {\rm for}\ s\geq\sigma_p=n\left|\frac12-\frac1p\right|.
\end{align*}
\end{theorem}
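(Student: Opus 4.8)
The plan is to realize the three-step scheme sketched above and then deduce the $L_p$ bounds by complex interpolation. First, some reductions. Since $\lambda\mapsto e^{it\lambda}(1+\lambda)^{-s}$ has bounded derivatives of every order for each real $s\geq 0$, the a priori spectral multiplier hypothesis guarantees $e^{itL}(I+L)^{-s}f\in\mathcal{A}_\mathcal{M}$ whenever $f\in\mathcal{A}_\mathcal{M}$, so every expression below is meaningful. Factoring $(I+L)^{-s}=(I+L)^{-n/2}(I+L)^{-(s-n/2)}$ and noting that $(I+L)^{-(s-n/2)}=\frac1{\Gamma(s-n/2)}\int_0^\infty r^{s-n/2-1}e^{-r}S_r\wrt r$ is an average of the uniformly bounded maps $S_r$, hence bounded on $\mathcal{M}$, one reduces to the endpoint $s=n/2$. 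By self-adjointness of $L$ one has $\big(e^{itL}(I+L)^{-n/2}f\big)^*=e^{-itL}(I+L)^{-n/2}f^*$, so the row part of the $BMO_{\mathcal{S}}$-norm equals the column estimate with $(t,f)$ replaced by $(-t,f^*)$; hence it suffices to bound the column part. By the embedding $BMO_{\mathcal{Q}}^c(\mathcal{M})\subset BMO_{\mathcal{S}}^c(\mathcal{M})$ from Step~1 (Lemma~\ref{embed}) and the self-improvement $\|g\|_{BMO_{\mathcal{Q},1}}\simeq\|g\|_{BMO_{\mathcal{Q},M}}$ from Step~2 (for $g\in\mathcal{A}_\mathcal{M}$), the first assertion of the theorem reduces to proving, for a fixed but sufficiently large $M=M(n,m)\in\mathbb{Z}_+$,
\[
\sup_{u>0}\Big\|R_{\sqrt[m]{u}}\,\big|(I-S_u)^M e^{itL}(I+L)^{-n/2}f\big|^2\Big\|_{\mathcal{M}}^{1/2}\ \lesssim\ (1+|t|)^{n/2}\,\|f\|_{\mathcal{M}}.
\]

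This estimate, which is Step~3, is the core of the argument and the step I expect to be the main obstacle. Fix $u>0$ and put $F_{u,t}(\lambda)=(1-e^{-u\lambda})^M e^{it\lambda}(1+\lambda)^{-n/2}$, so the operator inside the norm is $F_{u,t}(L)f$. The obstruction is visible already in the classical model $L=-\Delta$: by the subordination identity $e^{itL}(I+L)^{-n/2}=\frac1{\Gamma(n/2)}\int_0^\infty r^{n/2-1}e^{-r}e^{-(r-it)L}\wrt r$ the kernel of the complex-time operator $e^{-(r-it)L}$ has $L_1$-norm $\sim(1+|t|/r)^{n/2}$, whose $r$-integral diverges logarithmically at $r=0$, so that $e^{itL}(I+L)^{-n/2}$ is unbounded on $\mathcal{M}$ and one is forced both to target $BMO_{\mathcal{S}}$ and to exploit the high-cancellation factor $(I-S_u)^M$. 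To prove the displayed bound I would split $F_{u,t}$ into a low-frequency part supported in $\{\lambda\lesssim 1/u\}$ --- harmless once $M>n/2$, since there $|1-e^{-u\lambda}|^M\lesssim(u\lambda)^M$ and its contribution to $R_{\sqrt[m]{u}}|\cdot|^2$ is $\lesssim\|f\|_{\mathcal{M}}^2$ by the plain doubling consequence of the strong doubling condition --- together with Littlewood--Paley blocks $G_j(\lambda)=F_{u,t}(\lambda)\psi(2^{-j}\lambda)$, $2^j\gtrsim 1/u$, with $\psi$ a fixed dyadic bump. For a single block one must separate the Schr\"odinger oscillation from a heat-type factor: representing $e^{it\lambda}\psi(2^{-j}\lambda)$ through a finite-propagation-speed (equivalently, complex-Gaussian) expansion against the wave operators built from $\sqrt[m]{L}$, as in the classical Schr\"odinger-group arguments (cf.~\cite{CDLY}), $G_j(L)$ carries the Bessel amplitude $2^{-jn/2}$, spreads algebraic mass out to distance $\lesssim|t|\,2^{j(1-1/m)}$, and retains a residual heat factor $S_{2^{-j}}$.

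It then remains to estimate $R_{\sqrt[m]{u}}|G_j(L)f|^2$ and sum over $j$. Inserting the annular partition $\sum_{k\geq 1}b_{k,\sqrt[m]{u}}=1$ into the amplified computation, the $k$-th annular term is controlled by combining: the semigroup majorization $S_{2^{-j}}(|\xi|^2)\leq\sum_l\sigma_{l,2^{-j}}^*R_{l,2^{-j/m}}(|\xi|^2)\sigma_{l,2^{-j}}$; the average domination condition, with the constant $k_{\mathcal{Q}}$, to pass from $R_{l,2^{-j/m}}$ to $R_{2^{-j/m}}$; the $L_2$-Gaussian estimate~\eqref{preoff}, which supplies the exponential gain $E_k$; and the strong doubling condition, which converts the surviving $R$-averages into volume ratios at the relevant scales. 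After summing over the annuli --- the exponential gains $E_k$ beating the polynomial doubling losses --- the $j$-th block contributes to the square of the $BMO_{\mathcal{S}}$-norm a term of size $\sim|t|^{n}$, weighted by a factor geometrically decaying in $2^{-j}/u\leq 1$; the $|t|^{n}$ arises from weighing the volume ratio $(\rho_j/\sqrt[m]{u})^{n}$ at the propagation scale $\rho_j\sim\max(\sqrt[m]{u},\,|t|\,2^{j(1-1/m)})$ against the Bessel amplitude $2^{-jn}$, and summing the geometric series in $j$ produces $(1+|t|)^{n/2}$. Tracking the interplay of the three scales --- the ball radius $\sqrt[m]{u}$, the heat time $2^{-j}$, and the propagation distance $\rho_j$ --- precisely enough to keep this $t$-dependence sharp is the heart of the matter; the large $M$ is needed exactly to make the low-frequency truncation and the Littlewood--Paley tails summable. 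This yields the $L_\infty\to BMO_{\mathcal{S}}$ bound.

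For the $L_p$ statement, recall that under the additional hypotheses ($\mathcal{S}$ Markovian and admitting a Markov dilation) $BMO_{\mathcal{S}}(\mathcal{M})$ interpolates with the $L_p(\mathcal{M})$-scale (Section~\ref{lpbod}). Running the core estimate with the Bessel exponent $n/2$ replaced by $n(1-iy)/2$, $y\in\mathbb{R}$, costs only factors polynomial in $|y|$ at each dyadic block, so that $\big\|e^{itL}(I+L)^{-n(1-iy)/2}f\big\|_{BMO_{\mathcal{S}}(\mathcal{M})}\lesssim(1+|y|)^{N}(1+|t|)^{n/2}\|f\|_{\mathcal{M}}$ for some $N=N(n)$. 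Consider the analytic family $T_z=e^{itL}(I+L)^{-\frac{n}{2}(1-z)}$ on the strip $0\leq\Re z\leq 1$: on $\Re z=0$ it is the operator just bounded $\mathcal{M}\to BMO_{\mathcal{S}}(\mathcal{M})$, with admissible polynomial growth in $\Im z$; on $\Re z=1$, $T_{1+iy}=e^{itL}(I+L)^{iny/2}$ is a product of unitaries on $L_2(\mathcal{M})$, hence bounded there with norm $1$. Stein interpolation at $\Re z=\theta=2/p$ (for $2\leq p<\infty$) then gives $T_\theta=e^{itL}(I+L)^{-\sigma_p}\colon L_p(\mathcal{M})\to L_p(\mathcal{M})$ with norm $\lesssim\big((1+|t|)^{n/2}\big)^{1-\theta}=(1+|t|)^{\sigma_p}$, while $s>\sigma_p$ is reduced to $s=\sigma_p$ by subordination (now on $L_p$). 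Finally, for $1<p<2$ one invokes the duality identity $\big\|e^{itL}(I+L)^{-\sigma_p}\big\|_{L_p\to L_p}=\big\|e^{-itL}(I+L)^{-\sigma_p}\big\|_{L_{p'}\to L_{p'}}$ (self-adjointness of $L$) together with $\sigma_p=\sigma_{p'}$ to conclude.
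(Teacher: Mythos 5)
Your scaffolding is sound and matches the paper's: the reduction to the column estimate via $(e^{itL}(I+L)^{-s}f)^*=e^{-itL}(I+L)^{-s}f^*$, the subordination to $s=n/2$, the embedding $BMO^c_{\mathcal{Q}}\subset BMO^c_{\mathcal{S}}$, the self-improvement $BMO_{\mathcal{Q},1}\simeq BMO_{\mathcal{Q},M}$, the frequency decomposition into a low-frequency piece killed by $(I-S_u)^M$ plus dyadic blocks, and the interpolation endgame (your Stein-type family $T_z$ is the paper's $T(z)$ up to relabelling the endpoints; you should still route the argument through $L_p^0(\mathcal{M})=L_p(\mathcal{M})/\ker A_p$ and the projection $P$, since the $[L_2^0,BMO_{\mathcal{S}}]_\theta=L_p^0$ identity from Junge--Mei is stated only on the complemented subspace).

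The genuine gap is in the single-block estimate, which you yourself flag as the obstacle but then resolve by appeal to machinery that is not available here. You propose to expand $e^{it\lambda}\psi(2^{-j}\lambda)$ ``through a finite-propagation-speed \dots expansion against the wave operators built from $\sqrt[m]{L}$, as in the classical Schr\"odinger-group arguments.'' Under the paper's hypotheses there are no kernels, no wave operators, and no finite speed of propagation: the only quantitative operator input is the $L_2$-Gaussian estimate (Oii), which is stated for the \emph{real-time} semigroup $e^{-tL}$ only. The off-diagonal decay you need for the oscillatory block $G_j(L)$ therefore does not follow from anything you have assumed; deriving it is precisely the technical heart of the paper's proof. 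The paper first extends (Oii) to complex times $e^{-zL}$, $z\in\mathbb{C}_+$, by a Phragm\'en--Lindel\"of argument applied to scalar functionals $\tilde\phi(\cdot,y)$ (Lemma \ref{PL}), and then converts this, via the Fourier inversion $F(L)=\frac{1}{2\pi}\int_{\mathbb{R}}e^{(i\tau-1)R^{-1}L}\hat G(\tau)\,d\tau$, into annular off-diagonal bounds weighted by Besov norms $\|\delta_RF\|_{B^N}$ (Lemma \ref{PLPL}); the sharp $(1+|t|)^{n/2}$ then comes from an explicit integration-by-parts computation of $\mathcal{F}(\phi(\cdot)e^{i2^\ell t\cdot}(1+2^\ell\cdot)^{-s})$, whose mass concentrates near $\tau=2^\ell t$ --- not from a ``Bessel amplitude,'' which is a Euclidean kernel phenomenon. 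A second, smaller issue: you re-invoke the semigroup majorization at scale $2^{-j}$ inside the core estimate and then try to return to the fixed averaging scale $\sqrt[m]{u}$ of the $BMO$ norm; the paper never does this (majorization is used only in the embedding lemma), and matching the three scales $\sqrt[m]{u}$, $2^{-j/m}$ and the propagation radius through average domination plus strong doubling is not justified as written. As it stands, Step 3 of your argument does not close.
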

\begin{remark}{\rm
Theorem \ref{main} is sharp in the sense that under the imposed conditions, the dependency on $t$ of the upper bound and the minimal regularity of $s$ cannot be improved, which can be seen by the classical example $L=-\Delta$ mentioned at the beginning of the introduction. Note that one cannot expect to show the sharpness of this estimate for all von Neumann algebra $\mathcal{M}$ and all generator $L$ under our framework. A typical counterexample appears when $\mathcal{M}$ and $L$ chosen to be $L_\infty$-space over Heisenberg group and its associated sub-Laplacian operator, respectively. In this case, the threshold dimension should be the topological dimension instead of the homogeneous one \cite{MR4390220}.}
\end{remark}

\subsection{Applications}
By verifying all the algebraic, analytic, topological and operator conditions of the main theorem (with some minor modification), we shall see how our main result applies to a wide variety of concrete von Neumann algebras with specific P-metrics. We list these models in the following.

{\bf Example A. Doubling metric spaces}

In this commutative model $\mathcal{M}=L_\infty(X)$, we shall see that our algebraic theory recover the result obtained by Chen--Duong--Li--Yan (see more comment on Remark \ref{rm12}). In other words, we provide a new proof of $L_p$-boundedness for $(I+L)^{-\sigma_p}e^{itL}$ via $L_\infty\rightarrow BMO_L$ boundedness together with a standard complex interpolation argument (see for example \cite[Section 4]{CDLY}) under the assumption that $L$ is a non-negative self-adjoint operator on $L_2(X)$ satisfying Gaussian upper bound \eqref{GE}.

{\bf Example B. Operator-valued setting}

As a generalization of matrix-valued harmonic analysis, operator-valued (semi-commutative) harmonic analysis has became an active area of research (see for example \cite{HLX,MR4220746,Lai,MR2327840,MR2476951} and the references therein). The operator-valued model is perhaps  the easiest non-commutative model, but it usually require revolutionary arguments to do analysis due to noncommutativity. To illustrate our application on this model, we let $X$ be a doubling metric space with a dimension $n$, a distance function $d$ and a nonnegative Borel doubling measure $\mu$. Besides, let $\mathcal{M}$ be a semifinite von Neumann algebra equipped with a normal semifinite faithful trace $\tau$ and $\mathcal{N}=L_\infty(X)\botimes \mathcal{M}$ be the von Neumann algebra tensor
product equipped with tensor trace $\int\otimes \tau$. Moreover, let $L$ be a non-negative self-adjoint operator on $L_2(X)$.
Then under the assumption that  the heat kernel associated with $L$ is non-negative, satisfies Gaussian upper bound \eqref{GE} and induces a Markovian semigroup admitting a Markov dilation, our algebraic theory can be applied to establish the $L_p$-boundedness of $(id_{\mathcal{N}}+L\otimes id_\mathcal{M})^{-s}e^{it(L\otimes id_\mathcal{M})} $ for $s\geq n\big|{1\over  2}-{1\over  p}\big|$ (see Theorem \ref{operatorthm} for more details and Remark \ref{rmkey} for more comments). There are many concrete generators fulfilling all of our assumptions, such as Neumann-Laplacian operator $\Delta_N$ (resp. $\Delta_{{N_\pm}}$) on $\mathbb{R}^n$ (resp. $\mathbb{R}^n_\pm$) \cite{DDSY}, Dunkl operators $\Delta_{{\rm Dunkl}}$ on $\mathbb{R}^N$ equipped with Dunkl measure \cite{MR3989141}, and Laplace-Beltrami operator $\Delta_g$ on $n$-dimensional complete Riemannian manifold with non-negative Ricci curvature \cite[Section 5]{MR1103113}.

{\bf Example C. Quantum Euclidean spaces}

Quantum Euclidean spaces (also called Moyal spaces) are one of the archetypal algebras of noncommutative geometry, which were first introduced by many authors, such as
Groenewold \cite{MR18562} and Moyal \cite{MR29330}, for the study of quantum mechanics in phase space. Besides, being a kind of geometrical spaces with noncommutating spatial coordinates, these spaces have appeared frequently in several branches of mathematical physics \cite{MR1878801}, including string theory \cite{MR1720697} and noncommutative field theory \cite{MR1670037}.

Recently, many breakthroughs have been obtained in several branches of harmonic analysis over quantum Euclidean space, such as singular integral theory \cite{MR4320770}, function space theory \cite{MR4320770,MR3778570} and quantum differentiability \cite{MR4156216}. To illustrate our application on this space, we let $\mathcal{R}_\Theta$ be a quantum Euclidean space associated with an anti-symmetric real-valued $n\times n$ matrix $\Theta$. Denote $\Delta_\Theta$ be the natural quantum Laplacian operator over it (more details are referred to Section \ref{QES}). Then our algebraic theory can be applied to establish the $L_p(\mathcal{R}_\Theta)$-boundedness of $(id_{\mathcal{R}_\Theta}-\Delta_\Theta)^{-s }e^{-it\Delta_\Theta} $ for $s\geq n\big|{1\over  2}-{1\over  p}\big|$ (see Theorem \ref{qusch}).

{\bf Example D. Matrix algebras}

Denote $L_{B(\ell_2)}$ be the non-negative infinitesimal generator induced from a Markov semigroup of convolution type on $B(\ell_2)$ defined by
$$S_t\Big(\sum_{m,k}a_{m,k}e_{m,k}\Big)=\sum_{m,k}e^{-t|m-k|^2}a_{m,k}e_{m,k},$$
where $e_{m,k}$ is the matrix unit for $B(\ell_2)$. Then our algebraic theory can be applied to establish the $S_p$-boundedness of $(id_{B(\ell_2)}+L_{B(\ell_2)})^{-1/2 }e^{itL_{B(\ell_2)}}$ for $s\geq \big|{1\over  2}-{1\over  p}\big|$ (see Theorem \ref{bl2}), where $S_p$ denotes the Schatten-$p$ class on the Hilbert space $\ell_2$.

{\bf Example E. Group von Neumann algebras}

The interest of Fourier multipliers over group von Neumann algebras dates back to a pioneering work by Haarerup on free groups \cite{MR520930}. Fourier multiplier is an effective
tool to reveal the geometry of these algebras via approximation properties \cite{MR996553,MR784292,MR520930,MR3047470}. It is worthwhile to note that there have been fruitful results about Fourier multiplier $L_p$-theory over group von Neumann algebras in the past decade (see for example \cite{MR4438906,CGPT,MR3679616,MR3283931,MR3776274}). In particular, by investigating  the $L_\infty\rightarrow BMO$ endpoint inequality, Junge--Mei--Parcet \cite{MR3283931} established $L_p$ H\"{o}rmander-Mihlin multiplier theorem with optimal smoothness condition on discrete group von Neumann algebra equipped with finite-dimensional cocycles. To illustrate our application on these algebras, we let $\mathcal{L}(G)$ be the group von Neumann algebra of a discrete group $G$ equipped with $n$-dimensional cocycle.
Then our algebraic theory can be applied to obtain the $L_p(\mathcal{L}(G))$-boundedness of Fourier multiplier operator with multiplier function $e^{it\psi(g)}(1+\psi(g))^{-s}$ for $s\geq n\big|{1\over  2}-{1\over  p}\big|$ (see Theorem \ref{aser}), where $\psi:G\rightarrow \mathbb{R}_+$ is a length function on $G$. This provides an alternative proof for a specific radial Fourier multiplier of \cite[Theorem D]{MR3283931} (see also the first paragraph in the proof of Theorem \ref{aser}).

\subsection{Notation and structure of the paper}
We use $A\lesssim B$ to denote the statement that $A\leq CB$ for some constant $C>0$, and $A\simeq B$ to denote the statement that $A\lesssim B$ and $B\lesssim A$.

This paper is organized as follows. In Section \ref{222}, we first introduce the  concept of P-metric and then illustrate how metrics in homogeneous space fit in. Secondly, we introduce non-commutative $M$-order semigroup BMO space together with the associated metric-type BMO space, and then show an embedding theorem between them. Thirdly, we recall some basic concepts about operator-valued weight. Finally, we formulate the algebraic conditions, analytic conditions and operator conditions in the full generality, and then build the bridge between low-order metric BMO space and the higher one under these assumptions. In Section \ref{333}, we provide the proof of Theorem \ref{main}. In Section \ref{Appi}. we provide several  applications of Theorem \ref{main} by verifying the imposed conditions in different settings.

\bigskip
\section{P-metrics and non-commutative BMO spaces}\label{222}
\setcounter{equation}{0}
\subsection{P-metrics}\label{Hilbert modules}
This subsection will provide more explanation about the P-metric introduced in the introduction.
One shall see below that the notion P-metric is easily understood for commutative model case. We illustrate this in a more general von-Neumann algebra $L_\infty(X)$ by constructing  a P-metric on $L_\infty(X)$. To this end, let $\xi$ be a bounded complex-valued function on $X$. Then it satisfies the following semigroup majorization
\begin{align*}
\int_{X}p_{t}(x,y)|\xi(y)|^2d\mu(y)
&\leq C \sum_{j\geq 0}\frac{\mu(B(x,2^jt^{1/m}))}{\mu(B(x,t^{1/m}))}\exp\big(-c2^{\frac{jm}{m-1}}\big)\fint_{B(x,2^jt^{1/m})}|\xi(y)|^2d\mu(y)
\end{align*}
for some $C,c>0$ independent of $x\in X$ and $t>0$. Here we used the fact that $\exp\big(-c2^{\frac{jm}{m-1}}\big)\simeq 1$ when $j=1,2$.
This means that  $R_{j,t} \xi(x)$ is the average of $\xi\in L_\infty(X)$ over the ball $B(x,2^{j}t)$. Next, note that $B(x,2^jt^{1/m})$ can be covered by at most $c^{jn}$ balls with radius $t^{1/m}$ and with finite overlap. For simplicity, we denote these balls by $B_{j,1}(x,t^{1/m}),\cdots,B_{j,d_{j,n}}(x,t^{1/m})$, where $d_{j,n}\lesssim c^{jn}$. Such a cover is not unique, but we fix a kind of cover. Then for any $j\geq 0$ and $t>0$, the following average  domination inequality holds:
\begin{align*}
\left\|\fint_{B(x,2^jt^{1/m})}|\xi(y)|^2d\mu(y)\right\|_{L_\infty(X)}
&\leq \left\|\sum_{\ell=1}^{d_{j,n}}\frac{\mu(B_{j,\ell}(x,t^{1/m}))}{\mu(B(x,2^jt^{1/m}))}\fint_{B_{j,\ell}(x,t^{1/m})}|\xi(y)|^2d\mu(y)\right\|_{L_\infty(X)}\\
&\leq \left\|\sum_{\ell=1}^{d_{j,n}}\frac{\mu(B_{j,\ell}(x,t^{1/m}))}{\mu(B(x,2^jt^{1/m}))}\right\|_{L_\infty(X)}\left\|\fint_{B(x,t^{1/m})}|\xi(y)|^2d\mu(y)\right\|_{L_\infty(X)}\\
&\leq C\left\|\fint_{B(x,t^{1/m})}|\xi(y)|^2d\mu(y)\right\|_{L_\infty(X)}.
\end{align*}
If we choose $$|\sigma_{j,t}(x)|^2=C\frac{\mu(B(x,2^jt^{1/m}))}{\mu(B(x,t^{1/m}))}\exp\big(-c2^{\frac{jm}{m-1}}),$$ then by the doubling condition \eqref{doubling},
\begin{align*}
k_{\mathcal{Q}}
=C\left\|\sum_{j\geq 0}\frac{\mu(B(x,2^jt^{1/m}))}{\mu(B(x,t^{1/m}))}\exp\big(-c2^{\frac{jm}{m-1}}\big)\right\|_{L_\infty(X)}^{1/2}\leq C\left(\sum_{j\geq 0}2^{jn}\exp\big(-c2^{\frac{jm}{m-1}}\big)\right)^{1/2}<\infty,
\end{align*}
which verifies the conditions (i)-(iii) and then ends the construction of P-metric for $L_\infty(X)$.

\subsection{Semigroup $BMO_\mathcal{S}$}\label{semigroupBMO}
Given $M\in\mathbb{Z}_{+}$ and a noncommutative measure space $(\mathcal{M},\tau)$ together with a B-submarkovian semigroup $\mathcal{S}=(S_t)_{t\geq 0}$ acting on $(\mathcal{M},\tau)$,   we define the semigroup $BMO_{\mathcal{S},M}(\mathcal{M})$ semi-norm for $f\in \mathcal{M}$ as
\begin{align*}
\|f\|_{{ BMO}_{\mathcal{S},M}(\mathcal{M})}:=\max\big\{\|f\|_{{BMO}_{\mathcal{S},M}^r(\mathcal{M})},\|f\|_{{ BMO}_{\mathcal{S},M}^c(\mathcal{M})}\big\},
\end{align*}
where the column BMO semi-norm is given by
\begin{align*}
&\|f\|_{{ BMO}_{\mathcal{S},M}^c(\mathcal{M})}:=\sup_{t> 0}\big\|S_t|(I-S_t)^Mf|^2\big\|_{\mathcal{M}}^{1/2}
\end{align*}
and the row BMO semi-norm is given by $\|f\|_{{ BMO}_{\mathcal{S},M}^r(\mathcal{M})}:=\|f^*\|_{{ BMO}_{\mathcal{S},M}^c(\mathcal{M})}.$
For simplicity, we denote ${BMO}_{\mathcal{S}}^{\dagger}(\mathcal{M}):={ BMO}_{\mathcal{S},1}^{\dagger}(\mathcal{M})$ and ${ BMO}_{\mathcal{S}}(\mathcal{M}):={ BMO}_{\mathcal{S},1}(\mathcal{M})$, where $\dagger\in\{c,r\}$. Here we say that $\|\cdot\|_{BMO_{\mathcal{S},M}(\mathcal{M})}$ and $\|\cdot\|_{{ BMO}_{\mathcal{S},M}^{\dagger}(\mathcal{M})}$ are semi-norms on $\mathcal{M}$ since one can establish the following lemma:
\begin{lemma}\label{HM}
Let $\mathcal{M},\mathcal{N}$ be two von Neumann algebras, and  $T$ be a positive linear operator from $\mathcal{N}$ to $\mathcal{M}$. Then
$\|\cdot\|_Z$ is a semi-norm  on $\mathcal{N}$, where $\|x\|_Z:=\|T(x^*x)\|_\mathcal{M}^{1/2}$.
  \end{lemma}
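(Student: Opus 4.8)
The plan is to verify the two defining properties of a semi-norm for $\|x\|_Z := \|T(x^*x)\|_{\mathcal M}^{1/2}$: absolute homogeneity and the triangle inequality. Homogeneity is immediate, since for any scalar $\lambda$ we have $T((\lambda x)^*(\lambda x)) = |\lambda|^2 T(x^*x)$, whence $\|\lambda x\|_Z = |\lambda|\,\|x\|_Z$. The substance of the lemma is therefore the triangle inequality, and the natural route is to exhibit $\|\cdot\|_Z$ as the norm of $\mathcal N$ sitting inside a suitable inner-product-type structure over $\mathcal M$; more precisely, I would realize $x \mapsto$ ``$T(x^*x)^{1/2}$'' as a column-type embedding and use the operator-space/Cauchy--Schwarz machinery for completely positive maps.

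Concretely, first I would record that since $T$ is positive and linear, the sesquilinear map $(x,y)\mapsto T(y^*x)$ need not be defined on all of $\mathcal N$, but on the relevant domain it behaves like an $\mathcal M$-valued inner product: it is $\mathcal M$-positive on the diagonal and satisfies the Kadison--Schwarz/Cauchy--Schwarz inequality for positive maps, namely that the $2\times 2$ matrix $\big[T(x^*x),\,T(x^*y);\,T(y^*x),\,T(y^*y)\big]$ is positive in $M_2(\mathcal M)$. From this one extracts the pointwise estimate $T(x^*y)^*T(x^*y) \le \|T(x^*x)\|_{\mathcal M}\, T(y^*y)$ in $\mathcal M$, hence $\|T(x^*y)\|_{\mathcal M} \le \|x\|_Z\,\|y\|_Z$. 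Then expanding
\begin{align*}
T\big((x+y)^*(x+y)\big) = T(x^*x) + T(x^*y) + T(y^*x) + T(y^*y)
\end{align*}
and taking $\|\cdot\|_{\mathcal M}$, the triangle inequality in $\mathcal M$ together with the above cross-term bound gives $\|x+y\|_Z^2 \le \|x\|_Z^2 + 2\|x\|_Z\|y\|_Z + \|y\|_Z^2 = (\|x\|_Z + \|y\|_Z)^2$, which is exactly what is needed.

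I expect the only genuine subtlety — the ``main obstacle'' in a proof that is otherwise a short computation — to be the justification of the matrix positivity $\big[T(x^ix^j{}^*)\big]_{i,j} \ge 0$ and, relatedly, making sure the cross-term $T(x^*y)$ is a well-defined element of $\mathcal M$ in whatever generality $T$ is assumed (here $T$ maps $\mathcal N$ into $\mathcal M$, so this is not an issue, but one should note it is the polarization $x^*y = \frac14\sum_{k=0}^3 i^k (x+i^k y)^*(x+i^k y)$ that reduces everything to positive elements on which $T$ acts). For a positive linear map between von Neumann algebras this matrix positivity is standard — it follows from $2$-positivity, which for a positive map into an abelian algebra is automatic, and in general can be quoted from the theory of operator-valued weights/completely positive maps; since the statement is used in the paper only for such maps $T$ (averaging operators $S_t$, $R_{j,t}$, which are in fact completely positive), I would simply invoke complete positivity of $T$ to get the $M_2(\mathcal M)$-positivity and proceed as above.
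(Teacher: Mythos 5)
Your computations are fine, but, as you yourself flag, your route proves the lemma only under a hypothesis strictly stronger than the one stated: the positivity of the $2\times 2$ matrix $\bigl(T(x_i^*x_j)\bigr)_{i,j}$ in $M_2(\mathcal{M})$ requires $T$ to be $2$-positive, whereas the lemma assumes only that $T$ is positive, and positive maps between noncommutative von Neumann algebras need not be $2$-positive (the transpose map on $M_2(\mathbb{C})$ is the standard counterexample); your remark that $2$-positivity is automatic for maps into abelian algebras does not apply here since $\mathcal{M}$ is general. The paper sidesteps this entirely by scalarizing before invoking Cauchy--Schwarz: for each state $\phi$ of $\mathcal{M}$ it sets $\tilde\phi(x,y):=\phi(T(x^*y))$, which is a sesquilinear form on $\mathcal{N}$ with $\tilde\phi(x,x)=\phi(T(x^*x))\ge 0$ by mere positivity of $T$; the elementary scalar Cauchy--Schwarz inequality for such forms gives $\tilde\phi(x+y,x+y)^{1/2}\le\tilde\phi(x,x)^{1/2}+\tilde\phi(y,y)^{1/2}$, and since $T(x^*x)\ge 0$ one has $\|x\|_Z=\sup_{\phi}\tilde\phi(x,x)^{1/2}$, so taking the supremum over states yields the triangle inequality. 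Your approach does buy more, namely the operator-valued cross-term estimate $\|T(x^*y)\|_{\mathcal{M}}\le\|x\|_Z\|y\|_Z$, and it is adequate for every application in the paper (where the relevant maps are completely positive); but to prove the lemma in the generality in which it is stated you should replace the $M_2(\mathcal{M})$ step by the state-scalarization argument above.
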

\begin{proof}Let $\phi$ be a state on $\mathcal{M}$.
   For any $x,y\in\mathcal{N}$, we define $\tilde{\phi}(x,y):=\phi(T(x^*y))$, then one could easily deduce the following  Cauchy-Schwarz inequality:
    \begin{equation}\label{CSI}
      |\tilde{\phi}(x,y)|^2\leq \tilde{\phi}(x,x)\tilde{\phi}(y,y),
    \end{equation}
    which implies that
    $\tilde{\phi}(x+y,x+y)^{1/2}\leq \tilde{\phi}(x,x)^{1/2}+\tilde{\phi}(y,y)^{1/2}.$
    Therefore,
    \begin{align*}
    \|x+y\|_Z&=\sup_{ \phi \text{ is a state}}\phi(T((x+y)^*(x+y)))^{1/2}\leq \|x\|_Z+\|y\|_Z,
    \end{align*}
  which implies  the desired result.
  \end{proof}

   After quotienting out the null space of the semi-norm $\|\cdot\|_{{ BMO}_{\mathcal{S},M}^c(\mathcal{M})}$ on $\mathcal{M}$, we get a normed vector space which is denoted by $\Big(\mathcal{M}, \|\cdot\|_{{ BMO}_{\mathcal{S},M}^c(\mathcal{M})}\Big)$. Now we need to complete this space with respect to some suitable topology as in \cite{MR4437355,MR2885593,MR4178915} via using the concept of Hilbert module (see \cite{MR355613}).

Given a complete positive map $S_t$ from $\mathcal{M}$ to $\mathcal{M}$, one can construct the Hilbert module  $\mathcal{M}\bar{\otimes}_{S_t}\mathcal{M}$ as follows:
let $\langle\cdot, \cdot\rangle_{S_t}$ be  the $\mathcal{M}$-valued inner product on $\mathcal{M}\otimes\mathcal{M}$ which is defined by
  \begin{align*}
  \Big\langle \sum_{i}a_i\otimes b_i,\sum_{j}c_j\otimes d_j\Big\rangle_{S_t}:=\sum_{i,j}b_i^*S_t(a_i^*c_j)d_j
   \end{align*}
 for $a_i,b_i,c_j,d_j \in\mathcal{M}$. Then we consider the semi-norm $\|\cdot\|_{\mathcal{M}\bar{\otimes}_{S_t}\mathcal{M}}$ on $\mathcal{M}\otimes\mathcal{M}$ determined by  the $\mathcal{M}$-valued inner product
 \begin{align*}
  \|x\|_{\mathcal{M}\bar{\otimes}_{S_t}\mathcal{M}}:=\|\langle x,x\rangle_{S_t}^{1/2}\|_\mathcal{M}.
 \end{align*}
 After quotienting out the null space of the semi-norm, we obtain a normed vector space denoted by $\Big(\mathcal{M}\otimes\mathcal{M}, \|\cdot\|_{\mathcal{M}\bar{\otimes}_{S_t}\mathcal{M}}\Big)$, and then  $\mathcal{M}\bar{\otimes}_{S_t}\mathcal{M}$ denotes
 the completion of  $\Big(\mathcal{M}\otimes\mathcal{M}, \|\cdot\|_{\mathcal{M}\bar{\otimes}_{S_t}\mathcal{M}}\Big)$  with respect to the strong operator topology, i.e. the coarsest topology making the maps on $\mathcal{M}\otimes\mathcal{M}$
\begin{equation*}
  x\rightarrow \tau(a\langle x,x\rangle_{S_t})
\end{equation*}
 continuous for all $a\in L_1(\mathcal{M})$.

 Let $u:\Big(\mathcal{M}, \|\cdot\|_{{ BMO}_{\mathcal{S},M}^c(\mathcal{M})}\Big)\rightarrow\bigoplus_{t>0}\mathcal{M}\bar{\otimes}_{S_t}\mathcal{M}$ be the isometric map given by $$u(x):=\left((I-S_t)^Mx)\otimes 1\right)_{t>0}.$$
Then the column BMO space ${ BMO}_{\mathcal{S},M}^{c}(\mathcal{M})$ is defined as the weak-$*$ closure of $u(\mathcal{M})$ in $\bigoplus_{t>0}\mathcal{M}\bar{\otimes}_{S_t}\mathcal{M}$, see \cite{MR4178915} for more details.
 ${BMO}_{\mathcal{S},M}^{r}(\mathcal{M})$ is defined as the space consisting of the elements whose adjoint belongs to ${ BMO}_{\mathcal{S},M}^{c}(\mathcal{M})$. And ${BMO}_{\mathcal{S},M}(\mathcal{M})$ is defined to be the intersection of the row and column BMO spaces.

\subsection{Metric type $BMO_{\mathcal{Q},M}$}
Inspired by \cite{MR4178915},  we introduce a suitable metric type BMO space for von Neumann algebra and relate it with the semigroup type BMO spaces defined in Section \ref{semigroupBMO}. Given a bounded semigroup $\mathcal{S}=(S_t)_{t\geq 0}$ acting on $(\mathcal{M},\tau)$, consider a P-metric $\mathcal{Q}=\{(R_{j,t},\sigma_{j,t}):j\in\mathbb{N},t>0\}$ given in Section \ref{Hilbert modules}. For any $M\in \mathbb{Z}_+$, we define the semigroup $BMO_{\mathcal{Q},M}(\mathcal{M})$ semi-norm for $f\in \mathcal{M}$ as $$\|f\|_{BMO_{\mathcal{Q},M}(\mathcal{M})}=\max\big\{\|f\|_{BMO_{\mathcal{Q},M}^c(\mathcal{M})},\|f\|_{BMO_{\mathcal{Q},M}^r(\mathcal{M})}\big\},$$ where the column BMO semi-norm is given by
\begin{align*}
\|f\|_{BMO_{\mathcal{Q},M}^c(\mathcal{M})}:=\sup_{t> 0}\big\|R_{\sqrt[m]{t}}|(I-S_t)^Mf|^2\big\|_{\mathcal{M}}^{1/2}
\end{align*}
and the row BMO semi-norm is given by $\|f\|_{{BMO}_{\mathcal{Q},M}^r(\mathcal{M})}:=\|f^*\|_{{ BMO}_{\mathcal{Q},M}^c(\mathcal{M})}$. For simplicity, we denote ${ BMO}_{\mathcal{Q}}^{\dagger}(\mathcal{M}):={ BMO}_{\mathcal{Q},1}^{\dagger}(\mathcal{M})$ and ${ BMO}_{\mathcal{Q}}(\mathcal{M}):={ BMO}_{\mathcal{Q},1}(\mathcal{M})$, where $\dagger\in\{c,r\}$.

 With the P-metric, the metric type BMO space for classical model $L_\infty(X)$ equipped with the semigroup $\{S_t\}_{t\geq 0} $, where $L$ is a non-negative self-adjoint operator and its associated heat kernel satisfies $({\rm GE}_m)$, is determined by
\begin{align*}
\|f\|_{BMO_{\mathcal{Q}}}:=\sup_{t\geq 0}\left(\sup_{y\in X}\fint_{B(y,\sqrt[m]{t})}|(I-S_t)f(x)|^2dx\right)^{1/2},
\end{align*}
which induces the $BMO_L(X)$ space considered in \cite{MR2163867,MR2162784,MR2868142}.

Next, recalling that the value of the constant $k_\mathcal{Q}$ in the definition of P-metric, we show that for any $M\in\mathbb{Z}_+$, $BMO_{\mathcal{Q},M}$ embeds in $BMO_{\mathcal{S},M}$.
\begin{lemma}\label{embed}
Let $(\mathcal{M},\tau)$ be a noncommutative measure space equipped with a bounded semigroup $\mathcal{S}=(S_t)_{t\geq 0}$ and a P-metric $\mathcal{Q}=\{(R_{j,t},\sigma_{j,t}):j\in\mathbb{N},t>0\}$. Then for any $M\in\mathbb{Z}_+$ and $f\in\mathcal{M}$,
\begin{align*}
\|f\|_{{BMO}_{\mathcal{S},M}^c}\lesssim k_{\mathcal{Q}}\|f\|_{{ BMO}_{\mathcal{Q},M}^c}.
\end{align*}
\end{lemma}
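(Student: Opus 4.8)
The plan is to prove the column estimate $\|f\|_{BMO_{\mathcal{S},M}^c}\lesssim k_{\mathcal{Q}}\|f\|_{BMO_{\mathcal{Q},M}^c}$ by exploiting the semigroup majorization property (i) of the P-metric, which bounds $S_t(|\xi|^2)$ pointwise by a sum $\sum_{j\geq 0}\sigma_{j,t}^* R_{j,\sqrt[m]{t}}(|\xi|^2)\sigma_{j,t}$, combined with the average domination condition (ii) and the metric integrability condition (iii). First I would fix $t>0$ and set $\xi=(I-S_t)^M f\in\mathcal{M}$. Applying the semigroup majorization with this $\xi$ gives
\begin{align*}
S_t\big(|(I-S_t)^M f|^2\big)\leq \sum_{j\geq 0}\sigma_{j,t}^*\, R_{j,\sqrt[m]{t}}\big(|(I-S_t)^M f|^2\big)\,\sigma_{j,t}.
\end{align*}
Now I would take the $\mathcal{M}$-norm of both sides. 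Since each $R_{j,\sqrt[m]{t}}(|(I-S_t)^M f|^2)$ is a positive element of $\mathcal{M}$, the average domination condition (ii) yields $\|R_{j,\sqrt[m]{t}}(|(I-S_t)^M f|^2)\|_{\mathcal{M}}\lesssim \|R_{\sqrt[m]{t}}(|(I-S_t)^M f|^2)\|_{\mathcal{M}}\leq \|f\|_{BMO_{\mathcal{Q},M}^c}^2$, uniformly in $j$ and $t$; hence $R_{j,\sqrt[m]{t}}(|(I-S_t)^M f|^2)\leq C\|f\|_{BMO_{\mathcal{Q},M}^c}^2\cdot 1$ as operator inequality.

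Plugging this operator bound into the majorization inequality and using positivity of the conjugation $x\mapsto \sigma_{j,t}^* x\,\sigma_{j,t}$ together with $\sigma_{j,t}^*\sigma_{j,t}=|\sigma_{j,t}|^2$, I obtain
\begin{align*}
S_t\big(|(I-S_t)^M f|^2\big)\leq C\|f\|_{BMO_{\mathcal{Q},M}^c}^2\sum_{j\geq 0}|\sigma_{j,t}|^2.
\end{align*}
Taking $\mathcal{M}$-norms and invoking the metric integrability condition (iii), namely $\big\|\sum_{j\geq 0}|\sigma_{j,t}|^2\big\|_{\mathcal{M}}\leq k_{\mathcal{Q}}^2$ for all $t>0$, gives $\|S_t(|(I-S_t)^M f|^2)\|_{\mathcal{M}}\lesssim k_{\mathcal{Q}}^2\|f\|_{BMO_{\mathcal{Q},M}^c}^2$. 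Taking the supremum over $t>0$ then delivers the claim.

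The main technical point to be careful about is the passage from the scalar norm bound $\|R_{j,\sqrt[m]{t}}(\cdot)\|_{\mathcal{M}}\lesssim \|f\|^2$ to the operator inequality $R_{j,\sqrt[m]{t}}(\cdot)\leq C\|f\|^2\cdot 1$: this is legitimate precisely because $R_{j,\sqrt[m]{t}}(|(I-S_t)^M f|^2)$ is a positive element and for a positive $a\in\mathcal{M}$ one has $a\leq \|a\|_{\mathcal{M}}\cdot 1$. A second subtlety is ensuring the infinite sum $\sum_{j\geq 0}\sigma_{j,t}^*R_{j,\sqrt[m]{t}}(\cdot)\sigma_{j,t}$ converges appropriately and that termwise domination is preserved under the sum — this follows since all terms are positive and the partial sums are dominated, so the bound passes to the weak-$*$ (or norm) limit. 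Neither of these is a deep obstacle; the argument is essentially a direct chaining of the three defining properties of a P-metric, and the role of this lemma is simply to record that the metric-type BMO norm controls the semigroup BMO norm with constant $k_{\mathcal{Q}}$.
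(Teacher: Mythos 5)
Your proposal is correct and follows essentially the same route as the paper: apply the semigroup majorization to $\xi=(I-S_t)^Mf$, replace each positive element $R_{j,\sqrt[m]{t}}(|(I-S_t)^Mf|^2)$ by its norm times the identity, invoke average domination to reduce to $R_{\sqrt[m]{t}}$, and finish with the metric integrability bound $\big\|\sum_{j\geq 0}|\sigma_{j,t}|^2\big\|_{\mathcal{M}}^{1/2}\leq k_{\mathcal{Q}}$. The only difference is cosmetic ordering of the steps, and your remarks on the operator inequality $a\leq\|a\|_{\mathcal{M}}\cdot 1$ and on the convergence of the positive series are sound.
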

\begin{proof}
By the definition of P-metric defined in section \ref{Hilbert modules}, one has
\begin{align*}
\|f\|_{{ BMO}_{\mathcal{S},M}^c}
&=\sup_{t>0}\big\|S_t|(I-S_t)^Mf|^2\big\|_{\mathcal{M}}^{1/2}\\
&\leq\sup_{t>0}\Big\|\sum_{j\geq 0}\sigma_{j,t}^*R_{j,\sqrt[m]{t}}(|(I-S_t)^Mf|^2)\sigma_{j,t}\Big\|_{\mathcal{M}}^{1/2}\\
&\leq\sup_{t>0}\Big\|\sum_{j\geq 0}|\sigma_{j,t}|^2\big\|R_{j,\sqrt[m]{t}}(|(I-S_t)^Mf|^2)\big\|_\mathcal{M}\Big\|_{\mathcal{M}}^{1/2}\\
&\leq\sup_{t>0}\Big\|\sum_{j\geq 0}|\sigma_{j,t}|^2\Big\|_{\mathcal{M}}^{1/2}\sup_{t>0}\Big\|R_{\sqrt[m]{t}}(|(I-S_t)^Mf|^2)\Big\|_{\mathcal{M}}^{1/2}\\
&\lesssim k_{\mathcal{Q}}\|f\|_{{ BMO}_{\mathcal{Q},M}^c}.
\end{align*}
This ends the proof of Lemma \ref{embed}.
\end{proof}
\subsection{Operator-valued weights}\label{Operator-valued weights}
In this subsection we collect the definition and basic properties of operator-valued weights from \cite{MR534673,MR549119} (see also \cite{MR4178915} for the summary).

Let $\mathcal{M}$ be a von Neumann algebra, then the extended positive part $\widehat{\mathcal{M}}_+$  is the set consisting of all lower semi-continuous maps $m:\mathcal{M}_{*,+}\rightarrow [0,\infty]$ satisfying the linear law: for $a,b\geq 0$ and $f,g\in\mathcal{M}_{*,+}$, one has
$$m(a f+b g)=a m(f)+b m(g).$$
The extended positive part is closed under addition, scalar multiplication, increasing limits and conjugation map defined by $x\mapsto a^*xa$, for any $a\in\mathcal{M}$. It is direct that $\mathcal{M}_+$ lies in $\widehat{\mathcal{M}}_+$. Recall also that when $\mathcal{M}$ is abelian, then indeed one has $\mathcal{M}\simeq L_\infty(\Omega,\mu)$ for some measure space $(\Omega,\mu)$. Besides, in this case, the extended positive part corresponds to the set of $\mu$-measurable functions on $\Omega$ (module sets of zero measure) with values in $[0,\infty]$.

Operator-valued weights appears as ''unbounded conditional expectations''. The simplest nontrivial example about this concept is probably the partial trace $E_\mathcal{M}={\rm tr}_\mathcal{A}\otimes id_\mathcal{M}$, where $\mathcal{A}$ is a semifinite non-finite von Neumann algebra. Generally, an operator-valued weight is a kind of generalized conditionally expectation from a von Neumann algebra $\mathcal{N}$ to a von Neumann subalgebra $\mathcal{M}$. Specifically, it is a linear map from $\mathcal{N}_+$ to $\widehat{\mathcal{M}_+}$ satisfying
\begin{align}\label{opw}
E_\mathcal{M}(a^* fa)=a^*E_\mathcal{M}(f)a
\end{align}
for all $a\in\mathcal{M}$. $E_\mathcal{M}$ is called normal when it satisfies $\sup\limits_\alpha E_\mathcal{M}(f_\alpha)=E_\mathcal{M}(\sup\limits_\alpha f_\alpha)$ for bounded increasing nets $(f_\alpha)$ in $\mathcal{N}_+$.

Note that when $\mathcal{M}=\mathbb{C}$, the operator-valued weight $E_\mathcal{M}$ becomes an ordinary weight on $\mathcal{N}$. In analogy with ordinary weights, we consider
\begin{align*}
L_\infty^c(\mathcal{N};E_\mathcal{M})=\Big\{f\in\mathcal{N}:\|E_{\mathcal{M}}(f^*f)\|_{\mathcal{M}}<\infty\Big\}
\end{align*}
endowed with the natural (semi-)norm
\begin{align*}
\|f\|_{L_\infty^c(\mathcal{N};E_\mathcal{M})}:=\|E_{\mathcal{M}}(f^*f)\|_{\mathcal{M}}^{1/2}.
\end{align*}
Note that in the case of $E_{\mathcal{M}}=tr_{\mathcal{A}}\otimes id_{\mathcal{M}}$ with $\mathcal{N}=\mathcal{A}\botimes \mathcal{M}$, $L_{\infty}^c(\mathcal{N};E_{\mathcal{M}})$ becomes the Hilbert space valued noncommutative $L_\infty$ spaces $L_2^c(\mathcal{A})\botimes \mathcal{M}$ (see \cite{MR4178915}), which is defined in \cite{MR2265255}.

By identifying $\mathcal{M}$ as a von Neumann subalgebra of $B(L_2(\mathcal{M}))$ and then applying inequality \eqref{CSI}, one has the following equivalent (semi-)norms
\begin{align}\label{keyine}
\|f\|_{L_\infty^c(\mathcal{N}_\rho;E_\rho)}=\sup\limits_{\|b\|_{L_2(\mathcal{M})}=1}\tau(b^*E_\rho(|f|^2)b)^{\frac 12 },
\end{align}
which is an effective tool to deal with the strong operator topology defined in the next subsection.

Let $\mathcal{N}_{E_\mathcal{M}}$ be the linear span of $f_1^*f_2$ with $f_1$, $f_2\in L_\infty^c(\mathcal{N};E_\mathcal{M})$. Then one has the following properties:

(i) $\mathcal{N}_{E_\mathcal{M}}$=span$\{f\in\mathcal{N}_{+}:\|E_{\mathcal{M}}f\|<\infty\}$;

(ii) $L_\infty^c(\mathcal{N};E_\mathcal{M})$ and $\mathcal{N}_{E_\mathcal{M}}$ are two-sided modules over $\mathcal{M}$;

(iii) $E_\mathcal{M}$ has a unique linear extension $E_\mathcal{M}:\mathcal{N}_{E_\mathcal{M}}\rightarrow \mathcal{M}$ satisfying the bimodularity. That is, for any $f\in\mathcal{N}_{E_\mathcal{M}}$ and $a,b\in\mathcal{M}$,
$$E_\mathcal{M}(afb)=aE_\mathcal{M}(f)b.$$

\subsection{Algebraic, analytic, topological and operator conditions}\label{aao}
In what follows, we shall assume that the completely positive unital map $R_r$ from the P-metric $\mathcal{Q}$ defined in Section \ref{Hilbert modules} is of the following form:
\begin{align}\label{hhhh}
&\mathcal{M}\stackrel{\rho_j}{\longrightarrow}\mathcal{N}_\rho\stackrel{E_\rho}{\longrightarrow}\rho_1(\mathcal{M})\simeq \mathcal{M}\nonumber\\
R_{r}f&=E_{\rho}(q_{r})^{-\frac{1}{2}}E_{\rho}(q_{r}\rho_2(f)q_{r})E_{\rho}(q_{r})^{-\frac{1}{2}},
\end{align}
where $\rho_1,\rho_2:\mathcal{M}\rightarrow \mathcal{N}_\rho$ are unital injective $*$-homomorphisms into certain von Neumann algebra $\mathcal{N}_\rho$ and the map $E_\rho:\mathcal{N}_\rho\rightarrow \rho_1(\mathcal{M})$ is a normal operator-valued weight defined in Section \ref{Operator-valued weights}. The elements $q_{r}$ are increasing projections with respect to $r$ in $\mathcal{N}_\rho$ and satisfy some properties determined later on. 
 In particular, we shall assume that $q_{r}$ and $q_{r}\mathcal{N}_\rho q_r$ belong to the domain of $E_\rho$ so that our formula for $R_{r}f$ make sense. Our model provides a quite general form of P-metric which includes the P-metric for the heat semigroup considered before. Indeed, in the model case one could take $\mathcal{N}_\rho=L_\infty(\mathbb{R}^n\times \mathbb{R}^n)$ with $\rho_1(f)(x,y)=f(x)$ and $\rho_2(f)(x,y)=f(y)$. Let $E_\rho$ be the integral in
$\mathbb{R}^n$ with respect to the variable $y$ and set
 $q_{r}=\chi_{B(x,r)}(y).$
Then it is direct to check that we recover from \eqref{hhhh} the $R_{r}$'s for the heat semigroup. Note that the $q_{r}$ reproduce in this case all the Euclidean balls in $\mathbb{R}^n$.

Now we establish some preliminary algebraic, analytic and topological conditions on the P-metric. Let $C_b^\infty$ be  the space of bounded functions mapping $[ 0,\infty)$ to  $\mathbb{C}$ and having bounded derivatives at any order,
 we impose a priori assumption  that for any $F\in C_b^\infty$, the spectral multiplier $F(L):\mathcal{A}_\mathcal{M}\rightarrow \mathcal{A}_\mathcal{M}$ for some weak-$*$ dense subalgebra $\mathcal{A}_\mathcal{M}\subset \mathcal{M}$ such that $\mathcal{A}_\mathcal{M}$ is dense in $L_p(\mathcal{M})$ for any $1<p<\infty$. The above assumption is not a crucial condition as we mentioned in the introduction. This can be easily seen for the classical case $\mathcal{M}=L_\infty(\mathbb{R}^n)$ with $L=-\Delta$ and $\mathcal{A}_\mathcal{M}=\mathcal{S}(\mathbb{R}^n)$. For simplicity, in what follows we denote $V_\rho=L_\infty^c(\mathcal{N}_\rho;E_\rho)+\rho_2(\mathcal{A}_\mathcal{M})$ and $\mathcal{L}(X)$ be the space consisting of all linear operators over a linear space $X$.
  Assume that for any $F\in C_b^\infty$, there exists a homomorphism $\pi$: $\{F(L)\}_{F\in C_b^\infty}\subset B(L_2(\mathcal{M}))\rightarrow \mathcal{L}(V_\rho)$ such that the following intertwining equality holds: for any $F\in C_b^\infty$,
\begin{equation}\label{transfer}
 \pi(F(L))\circ \rho_2=\rho_2\circ F(L)\ {\rm on}\ \mathcal{A}_\mathcal{M}.
\end{equation}


{\bf Algebraic conditions}

(ALi) {\it $\mathcal{Q}$-monotonicity of $E_\rho$}
\begin{align*}
&E_\rho(b_{k,r}|\xi|^2 b_{k,r})\leq E_\rho(|\xi|^2)
\end{align*}
for any $k\geq 1$ and $\xi\in \mathcal{N}_\rho$, where
$$ b_{k,r}=\left\{\begin{array}{ll}q_{2r}, &{\rm if}\ k=1,\\q_{2^{k}r}-q_{2^{k-1}r}, &{\rm if}\ k\geq 2,\end{array}\right.$$
every projection $q_r$ determined by identity in $\eqref{hhhh}$.

(ALii) {\it Right modularity of $\pi(F(L))$}
\begin{align*}
\pi(F(L))(\eta E_\rho(q_r)^{-\frac 12})=\left(\pi(F(L))(\eta)\right)E_\rho(q_r)^{-\frac 12}
\end{align*}
for all $\eta\in L_\infty^c(\mathcal{N}_\rho;E_\rho)$,  $F\in C_b^\infty$ and $r>0$.
\begin{remark}
{\rm We will see below that all the algebraic conditions hold trivially in the classical theory, where the first one essentially says that the integration of a positive function over a ``ball'' or an ``annulus'' is always smaller than over the whole space, while the second condition says that one could take out $|B(x,r)|^{-1/2}$ from the $y$-dependent integral defining $F(L)$. Besides, the first condition also suggests that certain amount of commutativity for the projections is needed. In particular, we will frequently use the following inequality: for any $a\in\mathcal{N}_\rho$, $b\in\mathcal{M}$ and $r>0$,
\begin{align}\label{abcd}
\left\|aq_r b\right\|_{L_\infty^c(\mathcal{N}_{\rho};E_\rho)}\leq \left\|ab\right\|_{L_\infty^c(\mathcal{N}_{\rho};E_\rho)}.
\end{align}
The above inequality can be deduced, by the property of operator-valued weight \eqref{opw} together with condition (ALi), as follows:
$$E_\rho(b^* q_r |a|^2 q_r b)=b^* E_\rho(q_r |a|^2 q_r) b\leq b^* E_\rho(|a|^2) b=E_\rho(b^* |a|^2 b).$$}
\end{remark}

{\bf Topological conditions}

\begin{definition}
Let $\{f_k\}_k\subset L_\infty^c(\mathcal{N}_\rho;E_\rho)$ and $f\in L_\infty^c(\mathcal{N}_\rho;E_\rho)$. We say that $f_k$ converges to $f$ in the strong operator topology if for any $b\in L_2(\mathcal{M})$, we have
\begin{align*}
\lim_{k\rightarrow \infty}\tau\Big(b^*E_\rho((f_k-f)^*(f_k-f))b\Big)=0.
\end{align*}
\end{definition}

(Ti) {\it Continuity of $q_r$}

$q_s$ converges to $1$ in the sense that for any $r>0$, $F\in\mathcal{S}(\mathbb{R})$ and $f\in V_\rho$,
$$\pi(F(L))(fq_s)q_rE_\rho(q_r)^{-\frac 12}\rightarrow \pi(F(L))(f)q_rE_\rho(q_r)^{-\frac 12}\ {\rm in}\ {\rm the}\ {\rm strong}\ {\rm operator } \ {\rm topology}.
$$

(Tii) {\it Continuity of $\pi$}

If $\mathcal{S}(\mathbb{R})\ni F_k\rightarrow F\in\mathcal{S}(\mathbb{R})$ pointwise and $\sup\limits_{k}\|F_k\|_\infty<+\infty$, then for any $r>0$ and $f\in V_\rho$,
$$\pi(F_k(L))(f)q_rE_\rho(q_r)^{-\frac 12}\rightarrow \pi(F(L))(f)q_rE_\rho(q_r)^{-\frac 12}\  {\rm in}\ {\rm the}\ {\rm strong}\ {\rm operator } \ {\rm topology}.$$
%


{\bf Operator conditions}

%

(Oi) {\it Boundedness condition}
$$\|\pi(F(L))\|_{L_\infty^c(\mathcal{N}_{\rho};E_\rho)\rightarrow L_\infty^c(\mathcal{N}_{\rho};E_\rho)}\leq \|F\|_{L_{\infty}(\mathbb{R})}$$
for all  $F\in C_b^\infty$.

%

(Oii) {\it $L_2$-Gaussian estimate}

There exist constants $C,c>0$ and $m\geq 2$ such that for any $t>0$, $k\geq 1$, $r_2\geq r_1$ and $f\in\mathcal{N}_\rho$,
\begin{align}\label{preoff}
&\Big\|\left(\pi(e^{-tL})(fb_{k,r_2})\right)q_{r_1} E_\rho(q_{r_1})^{-\frac{1}{2}}\Big\|_{L_\infty^c(\mathcal{N}_{\rho};E_\rho)}\nonumber\\
&\leq CE_k\left(-c\left(\frac{r_2}{t^{1/m}}\right)^{\frac{m}{m-1}}\right)\Big\|fb_{k,r_2} E_\rho(q_{\max\{r_1,t^{1/m}\}})^{-\frac{1}{2}}\Big\|_{L_\infty^c(\mathcal{N}_{\rho};E_\rho)},
\end{align}
where
$$ E_{k}(x)=\left\{\begin{array}{ll}1, &{\rm if}\ k=1,\\ \exp(2^{\frac{mk}{m-1}}x), &{\rm if}\ k\geq 2.\end{array}\right.$$

%
%
%

\begin{remark}{\rm
We shall see in Section \ref{Appi} that in the lack of good metrics on general von Neumann algebra, the $L_2$-Gaussian condition is a suitable replacement of pointwise Gaussian estimate \eqref{GE}.}
\end{remark}

{\bf Analytic condition}

(ANi)  {\it Strong doubling condition}

There exist constants $C,n>0$, $D\geq 0$, such that for any $r_1\geq r_2\geq r_3$ and $f\in\mathcal{A}_\mathcal{M}$,
\begin{align*}
\Bigg\|E_\rho\bigg(\bigg|\rho_2(f)q_{r_1}E_\rho(q_{r_3})^{-\frac{1}{2}}\bigg|^{2}\bigg)\Bigg\|_{\mathcal{M}}\leq C\left(\frac{r_1}{r_2}\right)^{D}\left(\frac{r_1}{r_3}\right)^{n}\Bigg\|E_\rho\bigg(\bigg|\rho_2(f)q_{r_2}E_\rho(q_{r_2})^{-\frac{1}{2}}\bigg|^{2}\bigg)\Bigg\|_{\mathcal{M}}.
\end{align*}
\begin{remark}{\rm
Due to the lack of concepts of pointwise and position in general metric space, the procedure of covering cannot be proceeded. Instead, in this condition, we apply an energy inequality to suggest certain kind of 'covering phenomenon' over the underlying space. More details could be found in the applications given in the last section. Besides, we would like to mention that by choosing the parameters $f=1$ and $r_1=r_2$, this condition implies the following {\it doubling inequality}:
for any $r_1\geq r_2$, there are constants $C,n>0$ such that
\begin{align}\label{ANi}
E_{\rho}(q_{r_{2}})^{-\frac{1}{2}}E_{\rho}(q_{r_{1}})E_{\rho}(q_{r_{2}})^{-\frac{1}{2}}\leq C\left(\frac{r_1}{r_2}\right)^{n}.
\end{align}
The smallest constant $n=n_{\bf q}$ such that the above inequality valid is called the {\it P-metric dimension} of $\mathcal{M}$.}
\end{remark}
%

$\bullet$ {\bf Note:} Throughout Sections \ref{aao} and \ref{edp}, unless we mention the contrary, we assume all the conditions stated as above.
%
%
%
\begin{lemma}\label{ccccmp}
There exists a constant $C>0$ such that for any $t>0$, $r>0$ and $f\in L_\infty^c(\mathcal{N}_\rho;E_\rho)$,
\begin{align*}
\Big\|\pi(e^{-tL})(f)q_{r} E_\rho(q_{r})^{-\frac{1}{2}}\Big\|_{L_\infty^c(\mathcal{N}_{\rho};E_\rho)}\leq C\Big\|f E_\rho(q_{\max\{r,t^{1/m}\}})^{-\frac{1}{2}}\Big\|_{L_\infty^c(\mathcal{N}_{\rho};E_\rho)}.
\end{align*}
\end{lemma}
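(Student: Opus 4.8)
The plan is to reduce the asserted estimate to the $L_2$-Gaussian estimate (Oii) by splitting $f$ into ``dyadic annuli'' based at the scale $R:=\max\{r,t^{1/m}\}$ (rather than at $r$), applying (Oii) on each annulus, and summing the resulting Gaussian tails; starting the annuli at $R$ is what forces every Gaussian factor to be controlled by a single exponent, hence uniformly summable.

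First I would record that, since the $q_s$ are increasing projections, the family $\{b_{k,R}\}_{k\ge 1}$ telescopes, $\sum_{k=1}^{N}b_{k,R}=q_{2^{N}R}$, so that $fq_{2^{N}R}=\sum_{k=1}^{N}f\,b_{k,R}$. The continuity condition (Ti), applied with $F(L)=e^{-tL}$ (if one prefers to stay within $F\in\mathcal{S}(\mathbb{R})$, this is reached by approximating $e^{-t\,\cdot}$ pointwise by Schwartz functions with uniformly bounded sup-norms and invoking (Tii)), gives that $\pi(e^{-tL})(fq_{2^{N}R})\,q_{r}E_\rho(q_{r})^{-1/2}$ converges to $\pi(e^{-tL})(f)\,q_{r}E_\rho(q_{r})^{-1/2}$ in the strong operator topology as $N\to\infty$. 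By \eqref{keyine} and Lemma \ref{HM}, $\|\cdot\|_{L_\infty^c(\mathcal{N}_\rho;E_\rho)}$ is the supremum over unit vectors $b\in L_2(\mathcal{M})$ of the seminorms $g\mapsto\tau(b^*E_\rho(g^*g)b)^{1/2}$, and each of these is continuous along the SOT-convergent partial sums above; the triangle inequality then yields
\[
\big\|\pi(e^{-tL})(f)\,q_{r}E_\rho(q_{r})^{-1/2}\big\|_{L_\infty^c(\mathcal{N}_\rho;E_\rho)}\le\sum_{k\ge 1}\big\|\pi(e^{-tL})(f\,b_{k,R})\,q_{r}E_\rho(q_{r})^{-1/2}\big\|_{L_\infty^c(\mathcal{N}_\rho;E_\rho)}.
\]

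Next, for each $k\ge 1$, since $r\le R$ the estimate (Oii) with $r_1=r$ and $r_2=R$ bounds the $k$-th summand by $C\,E_k(-c(R/t^{1/m})^{m/(m-1)})\,\big\|f\,b_{k,R}\,E_\rho(q_{R})^{-1/2}\big\|_{L_\infty^c(\mathcal{N}_\rho;E_\rho)}$, where I used $\max\{r,t^{1/m}\}=R$. To strip $b_{k,R}$, note that $b_{k,R}$ is self-adjoint, so $|f\,b_{k,R}\,E_\rho(q_{R})^{-1/2}|^{2}=E_\rho(q_{R})^{-1/2}\,b_{k,R}\,|f|^{2}\,b_{k,R}\,E_\rho(q_{R})^{-1/2}$; applying the bimodularity of $E_\rho$ to factor out $E_\rho(q_{R})^{-1/2}\in\rho_1(\mathcal{M})$, then the $\mathcal{Q}$-monotonicity (ALi) in the form $E_\rho(b_{k,R}|f|^{2}b_{k,R})\le E_\rho(|f|^{2})$ together with order preservation under conjugation by $E_\rho(q_{R})^{-1/2}$ (this is precisely the computation already used to obtain \eqref{abcd}), one gets $\big\|f\,b_{k,R}\,E_\rho(q_{R})^{-1/2}\big\|_{L_\infty^c}\le\big\|f\,E_\rho(q_{R})^{-1/2}\big\|_{L_\infty^c}$. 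Since $R/t^{1/m}=\max\{r/t^{1/m},1\}\ge 1$, we have $E_k(-c(R/t^{1/m})^{m/(m-1)})\le E_k(-c)$ for every $k$, and $\sum_{k\ge 1}E_k(-c)=1+\sum_{k\ge 2}\exp(-c\,2^{mk/(m-1)})$ is a finite constant depending only on $c$ and $m$. Combining these bounds and recalling $R=\max\{r,t^{1/m}\}$ gives the assertion.

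The only genuinely delicate point is the topological justification in the second paragraph: legitimizing the dyadic decomposition of $f$ in the strong operator topology and checking that $e^{-tL}$ falls within the scope of conditions (Ti)--(Tii). Everything else is a direct application of (Oii), the $\mathcal{Q}$-monotonicity (ALi), the bimodularity of the operator-valued weight, and the elementary summability of the Gaussian tails.
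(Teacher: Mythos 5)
Your proof is correct and follows essentially the same route as the paper's: decompose $f$ over dyadic annuli at scale $\max\{r,t^{1/m}\}$ using the topological condition to justify the infinite sum, apply (Oii) with $r_1=r$, $r_2=\max\{r,t^{1/m}\}$, strip $b_{k,\cdot}$ via (ALi) and bimodularity, and sum the uniformly bounded Gaussian factors $E_k(-c)$. Your extra care in reaching $e^{-tL}$ from Schwartz functions via (Tii) is a minor refinement of a step the paper performs directly with (Ti).
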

\begin{proof}
By equality \eqref{keyine},
\begin{align}\label{fs}
{\rm LHS}&=\sup\limits_{\|b\|_{L_2(\mathcal{M})}=1}\Big|\tau\Big(b^* E_\rho\big(|(\pi(e^{-tL})(f)q_{r} E_\rho(q_{r}\big)^{-\frac{1}{2}})|^2\big)b\Big)\Big|^{1/2}.
\end{align}
Applying Lemma \ref{HM}, we see that for any $N>0$,
\begin{align}\label{wert}
&\Big|\tau\Big(b^* E_\rho\big(|(\pi(e^{-tL})(f)q_{r} E_\rho(q_{r}\big)^{-\frac{1}{2}})|^2\big)b\Big)\Big|^{1/2}\nonumber\\
&\leq \Big|\tau\Big(b^* E_\rho\big(|(\pi(e^{-tL})(f(1-q_{2^{N+1}\max\{r,t^{1/m}\}}))q_{r} E_\rho(q_{r}\big)^{-\frac{1}{2}})|^2\big)b\Big)\Big|^{1/2}\nonumber\\
&+\sum_{k=1}^{N}\Big|\tau\Big(b^* E_\rho\big(|(\pi(e^{-tL})(fb_{k,\max\{r,t^{1/m}\}})q_{r} E_\rho(q_{r}\big)^{-\frac{1}{2}})|^2\big)b\Big)\Big|^{1/2}.
\end{align}
 %
By inequalities \eqref{fs} and \eqref{wert} together with the topological condition (Ti), one can easily conclude that
\begin{align}\label{eeee}
\Big\|\pi(e^{-tL})(f)q_{r} E_\rho(q_{r})^{-\frac{1}{2}}\Big\|_{L_\infty^c(\mathcal{N}_{\rho};E_\rho)}\leq\sum_{k\geq 1}\Big\|\left(\pi(e^{-tL})(fb_{k,\max\{r,t^{1/m}\}})\right)q_{r} E_\rho(q_{r})^{-\frac{1}{2}}\Big\|_{L_\infty^c(\mathcal{N}_{\rho};E_\rho)}.
\end{align}
Applying the operator condition (Oii) and the algebraic condition (ALi), one see that
\begin{align*}
&{\rm RHS}\ {\rm of}\ \eqref{eeee}\\
&\leq C\sum_{k\geq 1}E_k(-c)\Big\|fb_{k,\max\{r,t^{1/m}\}} E_\rho(q_{\max\{r,t^{1/m}\}})^{-\frac{1}{2}}\Big\|_{L_\infty^c(\mathcal{N}_{\rho};E_\rho)}\\
&= C\sum_{k\geq 1}E_k(-c)\Big\|E_\rho(q_{\max\{r,t^{1/m}\}})^{-\frac{1}{2}}E_\rho(b_{k,\max\{r,t^{1/m}\}}|f|^2b_{k,\max\{r,t^{1/m}\}})E_\rho(q_{\max\{r,t^{1/m}\}})^{-\frac{1}{2}}\Big\|_{\mathcal{M}}^{\frac12}\\
&\leq C\sum_{k\geq 1}E_k(-c)\Big\|E_\rho(q_{\max\{r,t^{1/m}\}})^{-\frac{1}{2}}E_\rho(|f|^2)E_\rho(q_{\max\{r,t^{1/m}\}})^{-\frac{1}{2}}\Big\|_{\mathcal{M}}^{\frac12}\\
&\leq C\Big\|f E_\rho(q_{\max\{r,t^{1/m}\}})^{-\frac{1}{2}}\Big\|_{L_\infty^c(\mathcal{N}_{\rho};E_\rho)}.
\end{align*}
This ends the proof of Lemma \ref{ccccmp}.
\end{proof}

We need the following version of Phragmen-Lindel\"{o}f Theorem (see \cite[Lemma 9]{MR1346221}).
\begin{lemma}\label{PL0123}
Assume that $F$ is an analytic function in $\mathbb{C}_{+}:=\{z\in\mathbb{C}:{\rm Rez}>0\}$ satisfying
\begin{align*}
&|F(z)|\leq a_{1}({\rm Re}z)^{-b_{1}},\\
&|F(t)|\leq a_{1}t^{-b_{1}}{\rm exp}(-a_{2}t^{-b_{2}}),
\end{align*}
for some $a_{1}$, $a_{2}>0$, $b_{1}\geq 0$, $b_{2}\in(0,1]$ independent of $z\in\mathbb{C}_{+}$ and $t>0$. Then for any $z\in\mathbb{C}_{+}$,
\begin{align*}
|F(z)|\leq a_{1}2^{b_{1}}({\rm Re}z)^{-b_{1}}{\rm exp}\bigg(-\frac{a_{2}b_{2}}{2}|z|^{-b_{2}-1}{\rm Re}z\bigg).
\end{align*}
\end{lemma}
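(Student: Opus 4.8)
The plan is to run a Phragm\'{e}n--Lindel\"{o}f argument for the subharmonic function $\log|F|$, after first disposing of the polynomial weight $(\operatorname{Re}z)^{-b_1}$. Fix $z_0=x_0+iy_0\in\mathbb{C}_+$; replacing $F$ by $z\mapsto\overline{F(\bar z)}$ if necessary we may assume $y_0\ge0$, and if $y_0=0$ the second hypothesis already yields $|F(z_0)|\le a_1x_0^{-b_1}e^{-a_2x_0^{-b_2}}\le a_12^{b_1}x_0^{-b_1}e^{-\frac{a_2b_2}{2}x_0^{-b_2}}$, which is the claim since then $|z_0|=x_0$ and $b_2\le1<2$. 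So assume $y_0>0$. On the half-plane $H:=\{\operatorname{Re}z>x_0/2\}$ the first hypothesis gives $|F(z)|\le a_1(x_0/2)^{-b_1}=:M=a_12^{b_1}x_0^{-b_1}$ -- this is exactly where the factor $2^{b_1}$ enters -- while on the half-line $\{t>x_0/2\}$ one still has $|F(t)|\le a_1t^{-b_1}e^{-a_2t^{-b_2}}\le Me^{-a_2t^{-b_2}}$. Since $\overline{H}\subset\mathbb{C}_+$, $F$ is analytic in a neighbourhood of $\overline{H}$, so all boundary regularity issues have disappeared.

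The decay estimate is supported on the half-line $\{t>x_0/2\}$, which is \emph{interior} to $H$, so I would cut $H$ along it into two quadrants and work on the one, $Q=\{\operatorname{Re}z>x_0/2,\ \operatorname{Im}z>0\}$, that contains $z_0$. On the horizontal edge of $Q$ one has $\log|F|\le\log M-a_2t^{-b_2}$ and on the vertical edge $\log|F|\le\log M$; since $\log|F|$ is subharmonic and bounded above on $Q$ and the boundary data are integrable against the harmonic measure $\omega$ of $Q$ at $z_0$, the domination of a subharmonic function by the Poisson integral of its boundary values gives
\[
\log|F(z_0)|\ \le\ \log M-a_2\,w(z_0),
\]
where $w$ is the harmonic function on $Q$ with boundary values $t^{-b_2}$ on the horizontal edge and $0$ on the vertical edge. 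Thus $|F(z_0)|\le Me^{-a_2w(z_0)}$, and it remains to prove $w(z_0)\ge\frac{b_2}{2}\,|z_0|^{-b_2-1}\operatorname{Re}z_0$.

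For this lower bound I would map $Q$ onto the upper half-plane. In the $b_1=0$ version of the lemma (where the restriction to $H$ is unnecessary and one works directly on the sector $\{0<\arg z<\pi/2\}$ via $\zeta=z^2$) the computation is explicit: the harmonic extension to the upper half-plane of $\zeta^{-b_2/2}\mathbf{1}_{(0,\infty)}$ equals $|\zeta|^{-b_2/2}\sin\!\left(\tfrac{b_2}{2}(\pi-\arg\zeta)\right)\big/\sin\!\left(\tfrac{b_2\pi}{2}\right)$, so at $\zeta_0=z_0^2$ (where $|\zeta_0|=|z_0|^2$, $\arg\zeta_0=2\arg z_0$) the required inequality becomes, with $\gamma=\tfrac{\pi}{2}-\arg z_0$, the estimate $\sin(b_2\gamma)\ge\tfrac{b_2}{2}\sin\gamma\,\sin\!\left(\tfrac{b_2\pi}{2}\right)$; this holds because $\gamma\mapsto\sin(c\gamma)/\sin\gamma$ is nondecreasing on $(0,\tfrac{\pi}{2}]$ for $c\in(0,1]$ (hence $\ge b_2$) and $\sin(\tfrac{b_2\pi}{2})\le1$. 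For general $b_1$ the conformal map is $\zeta=(z-x_0/2)^2$ and $w$ is the Poisson integral of $(x_0/2+\sqrt{\zeta})^{-b_2}\mathbf{1}_{(0,\infty)}$; the transparent way to carry the bound through the shift is the elementary inequality $\operatorname{Re}(z^{-b_2})\ge b_2\,|z|^{-b_2-1}\operatorname{Re}z$ on $\mathbb{C}_+$ -- equivalently $\cos(b_2\theta)\ge b_2\cos\theta$ on $[-\tfrac{\pi}{2},\tfrac{\pi}{2}]$, valid since $0<b_2\le1$ -- which turns the harmonic majorant of $a_2t^{-b_2}$ into a lower bound of the asserted shape $\tfrac{a_2b_2}{2}|z|^{-b_2-1}\operatorname{Re}z$.

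The step I expect to be the main obstacle is precisely this last matching: the weight $(\operatorname{Re}z)^{-b_1}$ forces the passage to the sub-half-plane $H$, whose vertex sits at $x_0/2$ rather than at the origin about which the boundary decay $e^{-a_2t^{-b_2}}$ is organised, so one must estimate the harmonic majorant of $t^{-b_2}$ over the \emph{shifted} quadrant $Q$ and compare it, with the correct constant, to $|z_0|^{-b_2-1}\operatorname{Re}z_0$; the conformal reduction to the upper half-plane together with the trigonometric inequalities above is what accomplishes this. (Alternatively, one may simply invoke \cite[Lemma~9]{MR1346221}.)
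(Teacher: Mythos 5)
The paper itself does not prove this lemma (it is quoted from Davies, \cite[Lemma 9]{MR1346221}), so the only issue is whether your argument is sound. Your treatment of the case $b_1=0$ is correct: the quadrant/harmonic--measure reduction, the explicit extension $|\zeta|^{-b_2/2}\sin(\tfrac{b_2}{2}(\pi-\arg\zeta))/\sin(\tfrac{b_2\pi}{2})$, and the trigonometric inequalities all check out. The gap is exactly where you predict it, and it is genuine. After passing to $H=\{\operatorname{Re}w>x_0/2\}$ the quadrant $Q$ has its vertex at $x_0/2$ while the boundary data $t^{-b_2}$ is centred at the origin, and the needed bound $w(z_0)\ge\tfrac{b_2}{2}|z_0|^{-b_2-1}\operatorname{Re}z_0$ does not follow from $\operatorname{Re}(w^{-b_2})\ge b_2|w|^{-b_2-1}\operatorname{Re}w$. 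Indeed $\operatorname{Re}(w^{-b_2})$ is strictly positive on the vertical edge of $Q$, where the data vanishes, so $w=\operatorname{Re}(w^{-b_2})-U$ with $U$ the harmonic extension of the vertical-edge values $|w|^{-b_2}\cos(b_2\arg w)$, and $U(z_0)$ is not a small correction: writing $z_0=x_0+iy_0$ and letting $y_0/x_0\to\infty$ one finds $U(z_0)/\operatorname{Re}(z_0^{-b_2})\to1$ for $b_2<1$, and $\to\tfrac12$ for $b_2=1$, in which case $w(z_0)\sim\tfrac12\operatorname{Re}(1/z_0)$ exactly saturates the claimed constant. So the inequality you need is a near-exact cancellation between two comparable quantities; no crude use of the elementary inequality (pointwise on the boundary data, or via $\sup$ bounds on the vertical edge, or via Jensen) survives this regime.

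The clean repair keeps the vertex at the origin and absorbs the weight analytically instead of shrinking the half-plane. Put $\theta_0=\arg z_0\in[0,\pi/2)$, $\gamma=\tfrac{\pi}{2}-\theta_0$, $\alpha=\theta_0+\tfrac{\gamma}{2}$, and $G(w):=w^{b_1}F(w)$. Then $|G(w)|\le a_1(\cos\arg w)^{-b_1}\le a_1(\sin(\gamma/2))^{-b_1}=:M_\alpha$ on the sector $S_\alpha=\{0<\arg w<\alpha\}$ and $|G(t)|\le M_\alpha e^{-a_2t^{-b_2}}$ on $(0,\infty)$. Your harmonic--measure argument on $S_\alpha$, whose extension of $t^{-b_2}\mathbf{1}_{(0,\infty)}$ is $|w|^{-b_2}\sin\bigl(b_2(\alpha-\arg w)\bigr)/\sin(b_2\alpha)$, yields
\begin{align*}
|F(z_0)|\le a_1|z_0|^{-b_1}\bigl(\sin(\gamma/2)\bigr)^{-b_1}\exp\Bigl(-a_2|z_0|^{-b_2}\sin(b_2\gamma/2)/\sin(b_2\alpha)\Bigr).
\end{align*}
Now $\sin\gamma=2\sin(\gamma/2)\cos(\gamma/2)\le2\sin(\gamma/2)$ converts the prefactor into $a_12^{b_1}(\operatorname{Re}z_0)^{-b_1}$ --- this angle-halving, not the replacement of $x_0$ by $x_0/2$, is where $2^{b_1}$ comes from --- while $\sin(b_2\gamma/2)\ge b_2\sin(\gamma/2)\ge\tfrac{b_2}{2}\sin\gamma$ and $\sin(b_2\alpha)\le1$ give the exponent $\tfrac{a_2b_2}{2}|z_0|^{-b_2-1}\operatorname{Re}z_0$. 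With this substitution your scheme closes; as written, the shifted-quadrant step is a genuine gap.
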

We now apply Phragmen-Lindel\"{o}f Theorem to pass $L_2$-Gaussian estimate from $\mathbb{R}$ to $\mathbb{C}_+$.
\begin{lemma}\label{PL}
There is a constant $C>0$ such that for any $z\in\mathbb{C}_+$, $k\geq 1$, $0<r_1\leq r_2$ and $f\in\mathcal{N}_\rho$,
\begin{align}\label{cplex}
&\Big\|\left(\pi(e^{-zL})(fb_{k,r_2})\right)q_{r_1} E_\rho(q_{r_1})^{-\frac{1}{2}}\Big\|_{L_\infty^c(\mathcal{N}_{\rho};E_\rho)}\nonumber\\&\leq CE_k\left(-c\left(\frac{r_{2}}{|z|^{1/m}}\right)^{\frac{m}{m-1}}\cos\theta\right)\Big\|fb_{k,r_2} E_\rho(q_{\max\{r_1,({\rm Re}z)^{1/m}\}})^{-\frac{1}{2}}\Big\|_{L_\infty^c(\mathcal{N}_{\rho};E_\rho)},
\end{align}
where $\theta=\arccos \frac{{\rm Re}z}{|z|}$.
\end{lemma}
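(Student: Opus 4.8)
The plan is to apply the Phragmen-Lindel\"of principle (Lemma \ref{PL0123}) separately for each fixed $k\geq 1$, $0<r_1\leq r_2$, and $f\in\mathcal{N}_\rho$, and for each fixed state $b\in L_2(\mathcal{M})$ with $\|b\|_{L_2(\mathcal{M})}=1$. The key observation is that, by equality \eqref{keyine}, the quantity on the left-hand side of \eqref{cplex} is a supremum over such $b$ of expressions of the form
\begin{align*}
G_b(z):=\tau\Big(b^*E_\rho\big(\big|\left(\pi(e^{-zL})(fb_{k,r_2})\right)q_{r_1}E_\rho(q_{r_1})^{-\frac12}\big|^2\big)b\Big)^{\frac12},
\end{align*}
so it suffices to obtain the desired bound uniformly in $b$ for the scalar-valued function $z\mapsto G_b(z)^2$ (or rather a suitably normalized holomorphic primitive). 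First I would check that $z\mapsto \pi(e^{-zL})$ is analytic in $\mathbb{C}_+$ with values in $\mathcal{L}(V_\rho)$: this follows from the spectral calculus definition of $e^{-zL}$ together with the homomorphism property of $\pi$ and the continuity condition (Tii), so that $G_b$ extends to an analytic (more precisely, subharmonic after squaring) function on $\mathbb{C}_+$.

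Next I would establish the two hypotheses of Lemma \ref{PL0123}. The vertical-strip bound $|F(z)|\le a_1(\mathrm{Re}\,z)^{-b_1}$ comes from Lemma \ref{ccccmp} applied with $t=\mathrm{Re}\,z$: since $\mathrm{Re}(zL)=(\mathrm{Re}\,z)L$ after the obvious contraction estimate $\|\pi(e^{-zL})\|\le\|\pi(e^{-(\mathrm{Re}\,z)L})\|$ coming from (Oi) and $|e^{-z\lambda}|=e^{-(\mathrm{Re}\,z)\lambda}$, we get
\begin{align*}
\Big\|\left(\pi(e^{-zL})(fb_{k,r_2})\right)q_{r_1}E_\rho(q_{r_1})^{-\frac12}\Big\|_{L_\infty^c(\mathcal{N}_\rho;E_\rho)}\le C\Big\|fb_{k,r_2}E_\rho(q_{\max\{r_1,(\mathrm{Re}\,z)^{1/m}\}})^{-\frac12}\Big\|_{L_\infty^c(\mathcal{N}_\rho;E_\rho)},
\end{align*}
and then the doubling inequality \eqref{ANi} converts the weight $E_\rho(q_{\max\{r_1,(\mathrm{Re}\,z)^{1/m}\}})^{-1/2}$ into $E_\rho(q_{r_2})^{-1/2}$ at the cost of a factor $(r_2/\min\{r_1,(\mathrm{Re}\,z)^{1/m}\})^{n/2}$, which is of the required polynomial form $(\mathrm{Re}\,z)^{-b_1}$ once $z$ ranges over a region; a little care is needed to get the bound with the correct homogeneity, but morally $b_1=n/(2m)$. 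The real-axis bound $|F(t)|\le a_1 t^{-b_1}\exp(-a_2t^{-b_2})$ is exactly the $L_2$-Gaussian estimate (Oii): for $k\geq 2$ it gives the Gaussian factor $E_k(-c(r_2/t^{1/m})^{m/(m-1)})=\exp\big(2^{mk/(m-1)}\cdot(-c(r_2/t^{1/m})^{m/(m-1)})\big)$, so with $b_2=\tfrac1{m-1}\le 1$ (using $m\ge 2$) and the power $r_2^{m/(m-1)}$ absorbed into $a_2$ this matches the hypothesis; for $k=1$ the estimate is trivial and the conclusion is immediate.

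I would then invoke Lemma \ref{PL0123} to obtain, for $z\in\mathbb{C}_+$ with $\theta=\arccos(\mathrm{Re}\,z/|z|)$,
\begin{align*}
|F(z)|\le a_1 2^{b_1}(\mathrm{Re}\,z)^{-b_1}\exp\Big(-\tfrac{a_2 b_2}{2}|z|^{-b_2-1}\mathrm{Re}\,z\Big),
\end{align*}
and the exponential factor, after substituting $b_2=1/(m-1)$ and $a_2\sim c\,r_2^{m/(m-1)}2^{mk/(m-1)}$, becomes comparable to $\exp\big(-c'2^{mk/(m-1)}(r_2/|z|^{1/m})^{m/(m-1)}\cos\theta\big)$, since $|z|^{-b_2-1}\mathrm{Re}\,z=|z|^{-m/(m-1)}\cos\theta$. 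This is precisely the factor $E_k\big(-c(r_2/|z|^{1/m})^{m/(m-1)}\cos\theta\big)$ in the statement (with $k=1$ again handled trivially by (Oi)). Finally, the polynomial prefactor $(\mathrm{Re}\,z)^{-b_1}$ is converted back, via the doubling inequality \eqref{ANi} read in the reverse direction, into the weight $E_\rho(q_{\max\{r_1,(\mathrm{Re}\,z)^{1/m}\}})^{-1/2}$ appearing on the right side of \eqref{cplex}, completing the proof. The main obstacle I anticipate is bookkeeping: making the vertical-strip bound have exactly the homogeneity in $\mathrm{Re}\,z$ and $r_2$ that the Phragmen-Lindel\"of lemma requires, and then ensuring that the polynomial correction it produces is cleanly reabsorbed into the $E_\rho$-weight rather than left as a loose $(\mathrm{Re}\,z)^{-n/(2m)}$ factor; this is where the precise statement of (ANi) with both exponents $D$ and $n$ is needed, since one must track how the weight at scale $\max\{r_1,(\mathrm{Re}\,z)^{1/m}\}$ compares to the one at scale $r_1$ uniformly in the argument of $z$.
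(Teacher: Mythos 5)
Your overall strategy (Phragm\'en--Lindel\"of via Lemma \ref{PL0123}, with the half-plane bound coming from Lemma \ref{ccccmp} and the real-axis bound from (Oii)) is the same as the paper's, but two points in your execution are genuine gaps rather than bookkeeping. First, the quantity $G_b(z)^2=\tau\big(b^*E_\rho(|X(z)|^2)b\big)$ is not analytic in $z$, since $|X(z)|^2=X(z)^*X(z)$ involves the anti-analytic factor $X(z)^*$; a ``holomorphic primitive'' does not resolve this. The paper linearizes first: it fixes a state $\phi$ and a normalized $y$, applies Phragm\'en--Lindel\"of to the genuinely analytic scalar function $z\mapsto\tilde\phi\big(X(z),y\big)$, and only at the end recovers the $L_\infty^c$-norm via $\|x\|=\sup_\phi\sup_{\tilde\phi(y,y)=1}|\tilde\phi(x,y)|$. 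You need this (or an equivalent polarization) before Lemma \ref{PL0123} applies at all.

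Second, and more seriously, your plan for producing the $z$-dependent weight $E_\rho(q_{(\mathrm{Re}\,z)^{1/m}})^{-1/2}$ does not go through. If you run Phragm\'en--Lindel\"of with a fixed reference weight and $b_1=n/(2m)>0$, the hypothesis $|F(z)|\le a_1(\mathrm{Re}\,z)^{-b_1}$ fails as $\mathrm{Re}\,z\to\infty$ (the quantity does not decay), and likewise the real-axis hypothesis $|F(t)|\le a_1t^{-b_1}\exp(-a_2t^{-b_2})$ fails for large $t$; moreover ``reading \eqref{ANi} in reverse'' to trade the loose factor $(\mathrm{Re}\,z)^{-n/(2m)}$ back into the weight is a reverse-doubling statement that is not among the hypotheses. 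The paper's fix is the Davies-type regularization: it defines $F(z)=e^{-Rz}\,\tilde\phi(\cdots,y)$ with a \emph{free} parameter $R>0$, so that the factor $e^{-R\,\mathrm{Re}\,z}$ supplies the missing decay and the weight $E_\rho(q_{R^{-1/m}})^{-1/2}$ is a legitimate $z$-independent constant $a_1$; it then runs \emph{two} Phragm\'en--Lindel\"of arguments --- one with $b_1=0$ from the pair \eqref{comb1}/\eqref{pom1}, yielding the bound with weight at scale $r_1$, and one with $b_1=n/(2m)$ from \eqref{comb3}/\eqref{pom3}, yielding the bound with weight at scale $R^{-1/m}$ --- and only \emph{after} applying the lemma chooses $R=(\mathrm{Re}\,z)^{-1}$. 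The two resulting inequalities \eqref{Fz} and \eqref{Fz22222} together give the stated bound with the weight at $\max\{r_1,(\mathrm{Re}\,z)^{1/m}\}$, which is genuinely stronger than the $r_1$-weight bound alone and is what Lemma \ref{PLPL} needs downstream. Without the $e^{-Rz}$ factor and the deferred choice of $R$, your argument only yields the weaker estimate with weight $E_\rho(q_{r_1})^{-1/2}$.
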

\begin{proof}
We first consider the proof for the case $k=1$. To this end, noting that the map $\pi$ is a homomorphism, we apply Lemma \ref{ccccmp} and the right modularity condition (ALii) together with the boundedness condition (Oi) to conclude that for any $z\in\mathbb{C}_+$, $r_1\geq 0$ and $f\in L_\infty^c(\mathcal{N}_\rho;E_\rho)$,
\begin{align}\label{diaess}
\Big\|\left(\pi(e^{-zL})(f)\right)q_{r_1} E_\rho(q_{r_1})^{-\frac{1}{2}}\Big\|_{L_\infty^c(\mathcal{N}_{\rho};E_\rho)}
&=\Big\|\left(\pi(e^{-\frac{{\rm Re}z}{2}L})\pi(e^{-(z-\frac{{\rm Re}z}{2})L})(f)\right)q_{r_1} E_\rho(q_{r_1})^{-\frac{1}{2}}\Big\|_{L_\infty^c(\mathcal{N}_{\rho};E_\rho)}\nonumber\\
&\leq C\Big\|\left(\pi(e^{-(z-\frac{{\rm Re}z}{2})L})(f)\right) E_\rho(q_{\max\{r_1,(\frac{{\rm Re}z}{2})^{1/m}\}})^{-\frac{1}{2}}\Big\|_{L_\infty^c(\mathcal{N}_{\rho};E_\rho)}\nonumber\\
&= C\Big\|\pi(e^{-(z-\frac{{\rm Re}z}{2})L})(fE_\rho(q_{\max\{r_1,(\frac{{\rm Re}z}{2})^{1/m}\}})^{-\frac{1}{2}}) \Big\|_{L_\infty^c(\mathcal{N}_{\rho};E_\rho)}\nonumber\\
&\leq C\Big\| fE_\rho(q_{\max\{r_1,(\frac{{\rm Re}z}{2})^{1/m}\}})^{-\frac{1}{2}}\Big\|_{L_\infty^c(\mathcal{N}_{\rho};E_\rho)}\nonumber\\
&\leq C\Big\| fE_\rho(q_{\max\{r_1,({\rm Re}z)^{1/m}\}})^{-\frac{1}{2}}\Big\|_{L_\infty^c(\mathcal{N}_{\rho};E_\rho)},
\end{align}
where in the last inequality we applied the definition of $L_\infty^c(\mathcal{N}_\rho;E_\rho)$ and the doubling inequality \eqref{ANi} to get rid of the harmless multiple $\frac{1}{2}$ from $(\frac{{\rm Re}z}{2})^{1/m}$, which implies the desired estimate for $k=1$.

To obtain the desired estimate for the case $k\geq 2$, we let $\phi$ be a state of $\mathcal{M}$ and $\tilde{\phi}$ be a linear functional from   $L_\infty^c\times L_\infty^c$ to $\mathbb{C}$ given by $\tilde{\phi}(x,y)=\phi(E_\rho(x^*y))$. Let $y\in L_\infty^c(\mathcal{N}_\rho;E_\rho)$ such that $\tilde{\phi}(y,y)=1$. For any  $R>0$, we define  an analytic function over right-half plane $\mathbb{C}_+$ by
\begin{align}
F(z):=e^{-Rz}\tilde{\phi}\left(\left(\pi(e^{-zL})(fb_{k,r_2})\right)q_{r_1} E_\rho(q_{r_1})^{-\frac{1}{2}},y\right).
\end{align}
By inequalities \eqref{CSI} and \eqref{diaess}, we see that for any $z\in\mathbb{C}_{+}$,
\begin{align*}
|F(z)|
&\leq e^{-R{\rm Re}z}\tilde{\phi}\left(\left(\pi(e^{-zL})(fb_{k,r_2})\right)q_{r_1} E_\rho(q_{r_1})^{-\frac{1}{2}},\left(\pi(e^{-zL})(fb_{k,r_2})\right)q_{r_1} E_\rho(q_{r_1})^{-\frac{1}{2}}\right)^{1/2}\\
&\leq e^{-R{\rm Re}z}\Big\|\left(\pi(e^{-zL})(fb_{k,r_2})\right)q_{r_1} E_\rho(q_{r_1})^{-\frac{1}{2}}\Big\|_{L_\infty^c(\mathcal{N}_{\rho};E_\rho)}\\
&\leq Ce^{-R{\rm Re}z}\Big\|fb_{k,r_2}E_\rho(q_{\max\{r_1,({\rm Re}z)^{1/m}\}})^{-\frac{1}{2}}\Big\|_{L_\infty^c(\mathcal{N}_{\rho};E_\rho)},
\end{align*}
which, together with the definition of $L_\infty^c(\mathcal{N}_{\rho};E_\rho)$ and the doubling inequality \eqref{ANi}, implies the following pair of inequalities
\begin{align}\label{comb1}
|F(z)|\leq C \Big\|fb_{k,r_2}E_\rho(q_{r_1})^{-\frac{1}{2}}\Big\|_{L_\infty^c(\mathcal{N}_{\rho};E_\rho)}
\end{align}
and
\begin{align}\label{comb2}
|F(z)|\leq Ce^{-R{\rm Re}z}\Big\|fb_{k,r_2}E_\rho(q_{({\rm Re}z)^{1/m}})^{-\frac{1}{2}}\Big\|_{L_\infty^c(\mathcal{N}_{\rho};E_\rho)}.
\end{align}
To continue, by \eqref{comb2} and the doubling inequality \eqref{ANi}, one has
\begin{align}\label{comb3}
|F(z)|&\leq Ce^{-R{\rm Re}z}\Big\|E_\rho(q_{({\rm Re}z)^{1/m}})^{-\frac{1}{2}}E_\rho(q_{R^{-1/m}})E_\rho(q_{({\rm Re}z)^{1/m}})^{-\frac{1}{2}}\Big\|_\mathcal{M}^{\frac12}\Big\|fb_{k,r_2}E_\rho(q_{R^{-1/m}})^{-\frac{1}{2}}\Big\|_{L_\infty^c(\mathcal{N}_{\rho};E_\rho)}\nonumber\\
&\leq Ce^{-R{\rm Re}z}\bigg(1+\frac{1}{(R{\rm Re}z)^{1/m}}\bigg)^{\frac n2}\Big\|fb_{k,r_2}E_\rho(q_{R^{-1/m}})^{-\frac{1}{2}}\Big\|_{L_\infty^c(\mathcal{N}_{\rho};E_\rho)}\nonumber\\
&\leq C(R{\rm Re}z)^{-\frac{n}{2m}}\Big\|fb_{k,r_2}E_\rho(q_{R^{-1/m}})^{-\frac{1}{2}}\Big\|_{L_\infty^c(\mathcal{N}_{\rho};E_\rho)}.
\end{align}

On the other hand, for any $s>0$,  by inequality \eqref{CSI} and $L_2$-Gaussian estimate (Oii),
 we have
\begin{align}
|F(s)|
&\leq Ce^{-R s}\exp\left(-c\left(\frac{2^kr_2}{s^{1/m}}\right)^{\frac{m}{m-1}}\right)\Big\|fb_{k,r_2}E_\rho(q_{\max\{r_{1},s^{1/m}\}})^{-\frac{1}{2}}\Big\|_{L_\infty^c(\mathcal{N}_\rho;E_\rho)},
\end{align}
which, together with the definition of $L_\infty^c(\mathcal{N}_{\rho};E_\rho)$ and the doubling inequality \eqref{ANi}, implies the following pair of inequalities
\begin{align}\label{pom1}
|F(s)|\leq C\exp\left(-c\left(\frac{2^kr_2}{s^{1/m}}\right)^{\frac{m}{m-1}}\right)\Big\|fb_{k,r_2}E_\rho(q_{r_{1}})^{-\frac{1}{2}}\Big\|_{L_\infty^c(\mathcal{N}_\rho;E_\rho)}
\end{align}
and
\begin{align}\label{pom2}
|F(s)|\leq Ce^{-R s}\exp\left(-c\left(\frac{2^kr_2}{s^{1/m}}\right)^{\frac{m}{m-1}}\right)\Big\|fb_{k,r_2}E_\rho(q_{s^{1/m}})^{-\frac{1}{2}}\Big\|_{L_\infty^c(\mathcal{N}_\rho;E_\rho)}.
\end{align}
To continue, by \eqref{pom2} and the doubling inequality \eqref{ANi}, one has
\begin{align}\label{pom3}
|F(s)|
&\leq C(Rs)^{-\frac{n}{2m}}\exp\left(-c\left(\frac{2^kr_2}{s^{1/m}}\right)^{\frac{m}{m-1}}\right)\Big\|fb_{k,r_2}E_\rho(q_{R^{-1/m}})^{-\frac{1}{2}}\Big\|_{L_\infty^c(\mathcal{N}_\rho;E_\rho)}.
\end{align}

Applying Phragm\'{e}n-Lindel\"{o}f Theorem \ref{PL0123} to inequalities \eqref{comb1} and \eqref{pom1}, we conclude that
\begin{align*}
|F(z)|\leq C\exp\left(-c\left(\frac{2^kr_2}{|z|^{1/m}}\right)^{\frac{m}{m-1}}\cos\theta\right)\Big\|fb_{k,r_2} E_\rho(q_{r_1})^{-\frac{1}{2}}\Big\|_{L_\infty^c(\mathcal{N}_{\rho};E_\rho)}.
\end{align*}
Since $F$ depends on $R$ but the constant $C$ is independent of $R$ and $z$, we choose $R=({\rm Re}z)^{-1}$ to get that
\begin{align}\label{Fz}
&\big|\tilde{\phi}\left(\left(\pi(e^{-zL})(fb_{k,r_2})\right)q_{r_1} E_\rho(q_{r_1})^{-\frac{1}{2}},y\right)\big|\nonumber\\
&\leq C\exp\left(-c\left(\frac{2^kr_2}{|z|^{1/m}}\right)^{\frac{m}{m-1}}\cos\theta\right)\Big\|fb_{k,r_2} E_\rho(q_{r_1})^{-\frac{1}{2}}\Big\|_{L_\infty^c(\mathcal{N}_{\rho};E_\rho)}.
\end{align}

Applying Phragm\'{e}n-Lindel\"{o}f Theorem \ref{PL0123} again to inequalities \eqref{comb3} and \eqref{pom3}, we deduce that
\begin{align} \label{Fz222}
|F(z)|\leq CR^{-\frac{n}{2m}}({\rm Re}z)^{-\frac{n}{2m}}\exp\left(-c\left(\frac{2^kr_2}{|z|^{1/m}}\right)^{\frac{m}{m-1}}\cos\theta\right)\Big\|fb_{k,r_2} E_\rho(q_{R^{-1/m}})^{-\frac{1}{2}}\Big\|_{L_\infty^c(\mathcal{N}_{\rho};E_\rho)}.
\end{align}
Choosing $R=({\rm Re}z)^{-1}$ again, we get that
\begin{align} \label{Fz22222}
&\big|\tilde{\phi}\left(\left(\pi(e^{-zL})(fb_{k,r_2})\right)q_{r_1} E_\rho(q_{r_1})^{-\frac{1}{2}},y\right)\big|\nonumber\\
&\leq C\exp\left(-c\left(\frac{2^kr_2}{|z|^{1/m}}\right)^{\frac{m}{m-1}}\cos\theta\right)\Big\|fb_{k,r_2} E_\rho(q_{({\rm Re}z)^{1/m}})^{-\frac{1}{2}}\Big\|_{L_\infty^c(\mathcal{N}_{\rho};E_\rho)}.
\end{align}

To continue, recall from {inequality \eqref{CSI}} that for $x\in L_\infty^c(\mathcal{N}_{\rho};E_\rho)$,
\begin{align*}
   \|x\|_{L_\infty^c(\mathcal{N_\rho},E_\rho)}=\|E_{\rho}(x^*x)\|_{\mathcal{M}}^{1/2}
  &=\sup_{\phi \text{ is a state}} \phi(E_{\rho}(x^*x))^{1/2}\\
  &=\sup_{\phi \text{ is a state}} \tilde{\phi}(x,x)^{1/2}\\
  &=\sup_{\phi \text{ is a state}}\sup_{\tilde{\phi}(y,y)=1} |\tilde{\phi}(x,y)|.
 \end{align*}
Substituting $x$ with $\left(\pi(e^{-zL})(fb_{k,r_2})\right)q_{r_1} E_\rho(q_{r_1})^{-\frac{1}{2}}$ into the above inequality and then using inequalities \eqref{Fz} and \eqref{Fz22222}, we deduce that
\begin{align}
&\Big\|\left(\pi(e^{-zL})(fb_{k,r_2})\right)q_{r_1} E_\rho(q_{r_1})^{-\frac{1}{2}}\Big\|_{L_\infty^c(\mathcal{N}_{\rho};E_\rho)}\nonumber\\&\leq C\exp\left(-c\left(\frac{2^kr_2}{|z|^{1/m}}\right)^{\frac{m}{m-1}}\cos\theta\right)\Big\|fb_{k,r_2} E_\rho(q_{r_1})^{-\frac{1}{2}}\Big\|_{L_\infty^c(\mathcal{N}_{\rho};E_\rho)}
\end{align}
and that
\begin{align}
&\Big\|\left(\pi(e^{-zL})(fb_{k,r_2})\right)q_{r_1} E_\rho(q_{r_1})^{-\frac{1}{2}}\Big\|_{L_\infty^c(\mathcal{N}_{\rho};E_\rho)}\nonumber\\&\leq C\exp\left(-c\left(\frac{2^kr_2}{|z|^{1/m}}\right)^{\frac{m}{m-1}}\cos\theta\right)\Big\|fb_{k,r_2} E_\rho(q_{({\rm Re}z)^{1/m}})^{-\frac{1}{2}}\Big\|_{L_\infty^c(\mathcal{N}_{\rho};E_\rho)},
\end{align}
which implies the estimate \eqref{cplex}. This ends the proof of Lemma \ref{PL}.
\end{proof}

Next we define a Besov type norm of $f$ by
$$\|F\|_{B^N}:=\int_{-\infty}^{\infty}|\hat{F}(\tau)|(1+|\tau|)^Nd\tau,$$
where $\hat{F}$ denotes the Fourier transform of $f$. It can be verified easily that
$$\|FG\|_{B^N}\leq \|F\|_{B^N}\|G\|_{B^N}.$$
Besides, for any $R>0$, we denote the $R$-dilation of a given function $F$ by
$\delta_R F(x):=F(Rx)$.
\begin{lemma}\label{PLPL}
There is a constant $C>0$ such that for any $R>0$, $k\geq 2$, $0<r_1\leq r_2$ and $f\in\mathcal{N}_\rho$,
\begin{align}\label{parti}
&\Big\|\left(\pi(F(L))(fq_{2r_2})\right)q_{r_1} E_\rho(q_{r_1})^{-\frac{1}{2}}\Big\|_{L_\infty^c(\mathcal{N}_{\rho};E_\rho)}\nonumber\\&\leq C\|\delta_RF\|_{B^0}\Big\|fq_{2r_2} E_\rho(q_{\max\{r_1,R^{-1/m}\}})^{-\frac{1}{2}}\Big\|_{L_\infty^c(\mathcal{N}_{\rho};E_\rho)}
\end{align}
and
\begin{align}\label{parti}
&\Big\|\left(\pi(F(L))(fb_{k,r_2})\right)q_{r_1} E_\rho(q_{r_1})^{-\frac{1}{2}}\Big\|_{L_\infty^c(\mathcal{N}_{\rho};E_\rho)}\nonumber\\&\leq C(\sqrt[m]{R}2^k r_2)^{-N}\|\delta_RF\|_{B^N}\Big\|fb_{k,r_2} E_\rho(q_{\max\{r_1,R^{-1/m}\}})^{-\frac{1}{2}}\Big\|_{L_\infty^c(\mathcal{N}_{\rho};E_\rho)}
\end{align}
for all smoothing functions $F$ such that supp$F\subset [-R,R]$.
\end{lemma}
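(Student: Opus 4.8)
The plan is to Fourier-invert $F(L)$ and, exploiting the support condition, to write it as a superposition of complex-time semigroups $e^{-zL}$ with $z\in\mathbb{C}_+$, so that the whole estimate reduces to the complex Gaussian bound of Lemma \ref{PL}. Put $G:=\delta_RF$, so that $\operatorname{supp}G\subset[-1,1]$ and $F(L)=G(L/R)$ (we may assume $fb_{k,r_2}\in L_\infty^c(\mathcal{N}_\rho;E_\rho)$, the estimates being trivial otherwise). Writing $\gamma(s):=e^s$, one has $\hat G(\tau+i)=\widehat{G\gamma}(\tau)$ — simply substitute $\zeta=\tau+i$ in the entire extension of $\hat G$ — so Fourier inversion applied to $G\gamma\in C_c^\infty$ gives
\begin{align*}
G(\mu)=e^{-\mu}(G\gamma)(\mu)=\frac{1}{2\pi}\int_{\mathbb{R}}\hat G(\tau+i)\,e^{-(1-i\tau)\mu}\,d\tau,\qquad\mu\in\mathbb{R}.
\end{align*}
Evaluating at $\mu=L/R$ in the spectral calculus of $L\geq0$ — the integral converges absolutely in $B(L_2(\mathcal{M}))$ because $\hat G(\tau+i)=\widehat{G\gamma}(\tau)$ is Schwartz and $\|e^{-(1-i\tau)L/R}\|\leq1$ — realizes $F(L)=\frac{1}{2\pi}\int_{\mathbb{R}}\hat G(\tau+i)\,e^{-w(\tau)L}\,d\tau$ with $w(\tau):=(1-i\tau)/R\in\mathbb{C}_+$, for which $\operatorname{Re}w(\tau)=1/R$, $|w(\tau)|=\sqrt{1+\tau^2}/R$ and $\cos(\arg w(\tau))=(1+\tau^2)^{-1/2}$.

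Next I would transfer this identity through $\pi$. Approximating the integral by Riemann sums $F_j(\lambda)=\frac{1}{2\pi}\sum_\ell\hat G(\tau_\ell+i)\,e^{-(1-i\tau_\ell)\lambda/R}\,\Delta\tau_j$ — each of which, after multiplication by a fixed smooth cutoff equal to $1$ on $[0,\infty)$ (which does not change $F_j(L)$), lies in $\mathcal{S}(\mathbb{R})$, with $\sup_j\|F_j\|_\infty<\infty$ and $F_j\to F$ pointwise — and using that $\pi$ is a homomorphism, so $\pi(F_j(L))=\frac{1}{2\pi}\sum_\ell\hat G(\tau_\ell+i)\,\pi(e^{-(1-i\tau_\ell)L/R})\,\Delta\tau_j$, I apply the continuity axiom (Tii), the triangle inequality for the semi-norm $\|\cdot\|_{L_\infty^c(\mathcal{N}_\rho;E_\rho)}$ (Lemma \ref{HM}) and its lower semicontinuity for the strong operator topology (immediate from \eqref{keyine}) to obtain
\begin{align*}
\big\|\pi(F(L))(fb_{k,r_2})q_{r_1}E_\rho(q_{r_1})^{-\frac12}\big\|_{L_\infty^c(\mathcal{N}_\rho;E_\rho)}\leq\frac{1}{2\pi}\int_{\mathbb{R}}|\hat G(\tau+i)|\,\big\|\pi(e^{-w(\tau)L})(fb_{k,r_2})q_{r_1}E_\rho(q_{r_1})^{-\frac12}\big\|_{L_\infty^c(\mathcal{N}_\rho;E_\rho)}\,d\tau
\end{align*}
(more precisely, one applies Lemma \ref{PL} to each term of the Riemann sum before passing to the limit). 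Now Lemma \ref{PL} with $z=w(\tau)$ estimates the inner norm: because $\operatorname{Re}w(\tau)=1/R$ is independent of $\tau$, the weight $E_\rho(q_{\max\{r_1,R^{-1/m}\}})^{-1/2}$ appears exactly and factors out of the integral, while the Gaussian factor simplifies to $E_k\big(-c(r_2R^{1/m}/\sqrt{1+\tau^2})^{m/(m-1)}\big)$ — the powers of $|w(\tau)|$ and $\cos(\arg w(\tau))$ combining because $\tfrac{1}{2(m-1)}+\tfrac12=\tfrac{m}{2(m-1)}$. By the definition of $E_k$ this factor is $1$ when $k=1$ and $\exp\big(-c(2^kr_2R^{1/m}/\sqrt{1+\tau^2})^{m/(m-1)}\big)$ when $k\geq2$.

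It remains to bound the $\tau$-integral. Fix $\chi\in C_c^\infty$ equal to $1$ on $[-1,1]$; since $\operatorname{supp}G\subset[-1,1]$ we have $G\gamma=G\cdot(\chi\gamma)$, so the algebra property of $B^N$ yields
\begin{align*}
\int_{\mathbb{R}}|\hat G(\tau+i)|(1+|\tau|)^N\,d\tau=\|G\gamma\|_{B^N}\leq\|\chi\gamma\|_{B^N}\,\|G\|_{B^N}\lesssim_N\|\delta_RF\|_{B^N},
\end{align*}
the quantity $\|\chi\gamma\|_{B^N}$ being a fixed finite constant. For $k=1$, where $E_1\equiv1$, this with $N=0$ gives at once the first estimate. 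For $k\geq2$, the elementary inequality $e^{-cx^{m/(m-1)}}\leq C_Nx^{-N}$, valid for all $x>0$, applied with $x=2^kr_2R^{1/m}/\sqrt{1+\tau^2}$, bounds the Gaussian factor by $C_N(\sqrt[m]{R}\,2^kr_2)^{-N}(1+|\tau|)^N$; combined with the displayed bound this gives the second estimate.

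The hard part will be making the Fourier representation rigorous \emph{at the level of the purely algebraic homomorphism $\pi$ on $V_\rho$}: passing from the scalar spectral-calculus identity to the vector-valued statement via the Riemann-sum approximation, the continuity axiom (Tii), and lower semicontinuity of the $L_\infty^c$-norm. The remaining ingredients — the Schwartz decay of $\widehat{G\gamma}$, the exponent bookkeeping in Lemma \ref{PL}, and the $B^N$-norm estimate — are routine.
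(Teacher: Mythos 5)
Your proposal is correct and follows essentially the same route as the paper's proof: factor out $e^{-L/R}$ (your $\hat G(\tau+i)=\widehat{G\gamma}(\tau)$ is exactly the paper's $\hat G$ for $G(\lambda)=F(R\lambda)e^\lambda$), represent $F(L)$ as an integral of complex-time semigroups, apply Lemma \ref{PL} at $z=(1-i\tau)/R$, and control the $\tau$-integral by $\|\delta_RF\|_{B^N}$ via the algebra property of $B^N$ and a compactly supported cutoff. Your extra care with the Riemann-sum approximation and lower semicontinuity only makes explicit what the paper compresses into an appeal to (Tii).
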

\begin{proof}
Let $G(\lambda)=F(R\lambda)e^\lambda$. We apply the Fourier inversion formula
$$F(L)=G(L/R)e^{-L/R}=\frac{1}{2\pi}\int_{\mathbb{R}}e^{(i\tau -1)R^{-1}L}\hat{G}(\tau)d\tau$$
and the linearity of the amplification map $\pi$ together with the topological condition (Tii)  to get that
\begin{align}\label{convergeT}
\pi(F(L))(f)q_{r_1}E_\rho(q_{r_1})^{-\frac 12}=\frac{1}{2\pi}\int_{\mathbb{R}}\pi(e^{(i\tau -1)R^{-1}L})(f)q_{r_1}E_\rho(q_{r_1})^{-\frac 12}\hat{G}(\tau)d\tau,
\end{align}
where the improper integral converges in the strong operator topology.

To continue, we apply Lemma \ref{PL} with the choice $z=(1-i\tau)R^{-1}\in\mathbb{C}_+$ to deduce that  there exist constants $C,c>0$ such that for any $\tau\in\mathbb{R}$, $k\geq 1$, $R>0$, $0<r_1\leq r_2$ and $f\in\mathcal{N}_\rho$,
\begin{align}\label{parti}
&\Big\|\left(\pi(e^{(i\tau-1)R^{-1}L})(fb_{k,r_2})\right)q_{r_1} E_\rho(q_{r_1})^{-\frac{1}{2}}\Big\|_{L_\infty^c(\mathcal{N}_{\rho};E_\rho)}\nonumber\\&\leq CE_k\left(-c\left(\frac{\sqrt[m]{R}r_2}{\sqrt{1+\tau^2}}\right)^{\frac{m}{m-1}}\right)\Big\|fb_{k,r_2} E_\rho(q_{\max\{r_1,R^{-1/m}\}})^{-\frac{1}{2}}\Big\|_{L_\infty^c(\mathcal{N}_{\rho};E_\rho)}.
\end{align}
This, in combination with equalities \eqref{keyine} and \eqref{convergeT}, yields
\begin{align}\label{vvv1}
&\Big\|\left(\pi(F(L))(fb_{k,r_2})\right)q_{r_1} E_\rho(q_{r_1})^{-\frac{1}{2}}\Big\|_{L_\infty^c(\mathcal{N}_{\rho};E_\rho)}\nonumber\\
&\leq C\int_{\mathbb{R}}\Big\|\left(\pi(e^{(i\tau -1)R^{-1}L})(fb_{k,r_2})\right)q_{r_1} E_\rho(q_{r_1})^{-\frac{1}{2}}\Big\|_{L_\infty^c(\mathcal{N}_{\rho};E_\rho)}|\hat{G}(\tau)|d\tau\\
&\leq C\int_{\mathbb{R}}E_k\left(-c\left(\frac{\sqrt[m]{R}r_2}{\sqrt{1+\tau^2}}\right)^{\frac{m}{m-1}}\right)|\hat{G}(\tau)|d\tau \Big\|fb_{k,r_2} E_\rho(q_{\max\{r_1,R^{-1/m}\}})^{-\frac{1}{2}}\Big\|_{L_\infty^c(\mathcal{N}_{\rho};E_\rho)}.\nonumber
\end{align}
Note that the last term is dominated by $$C\|G\|_{B^0}\Big\|fq_{2r_2} E_\rho(q_{\max\{r_1,R^{-1/m}\}})^{-\frac{1}{2}}\Big\|_{L_\infty^c(\mathcal{N}_{\rho};E_\rho)}$$ if $k=1$ and $$C(\sqrt[m]{R}2^k r_2)^{-N}\|G\|_{B^N}\Big\|fb_{k,r_2} E_\rho(q_{\max\{r_1,R^{-1/m}\}})^{-\frac{1}{2}}\Big\|_{L_\infty^c(\mathcal{N}_{\rho};E_\rho)}$$ if $k\geq 2$.
To continue, note that supp$F\subset [-R,R]$ and so supp$F(R\cdot)\subset[-1,1]$. Hence taking a function $\psi\in C_c^\infty$ such that supp$\psi\subset[-2,2]$ and $\psi(\lambda)=1$ for $\lambda\in [-1,1]$, we have
\begin{align*}
G(\lambda)=F(R\lambda)e^\lambda=F(R\lambda)\psi(\lambda)e^\lambda,
\end{align*}
which implies that
\begin{align}\label{BNORM0}
\|G\|_{B^N}\leq C\|\delta_RF\|_{B^N}\|\psi(\lambda)e^\lambda\|_{B^N}\leq C\|\delta_RF\|_{B^N}.
\end{align}
This finishes the proof of Lemma \ref{PLPL}.
\end{proof}

\begin{proposition}\label{emb}
For any $M\in\mathbb{Z}_+$ and for all $f\in \mathcal{A}_\mathcal{M}$,
\begin{align}\label{pree}
\|f\|_{{ BMO}_{\mathcal{Q},1}^c}\simeq\|f\|_{{ BMO}_{\mathcal{Q},M}^c},
\end{align}
where the implicit constant depends on $M$.


\end{proposition}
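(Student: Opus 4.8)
The plan is to prove the two estimates $\|f\|_{{BMO}_{\mathcal{Q},M}^c}\lesssim_M\|f\|_{{BMO}_{\mathcal{Q},1}^c}$ and $\|f\|_{{BMO}_{\mathcal{Q},1}^c}\lesssim_M\|f\|_{{BMO}_{\mathcal{Q},M}^c}$ separately. Two remarks will be used throughout. First, since $S_t=\mathrm{id}$ on $\ker A_\infty$ and each $R_r$ fixes $\ker A_\infty$, both seminorms depend only on $f-\Pi f$ with $\Pi$ the projection onto $\ker A_\infty$, so one may assume $f\in\overline{{\rm Ran}\,L}$. Second, specializing (ANi) to $r_3=r_2$ and using $E_\rho(q_{r_1})^{1/2}E_\rho(q_{r_2})^{-1}E_\rho(q_{r_1})^{1/2}\ge1$ (valid for $r_1\ge r_2$ by monotonicity of $E_\rho$ in the projections $q_r$) gives the \emph{doubling for averages}
$$\big\|R_{r_1}(|\xi|^2)\big\|_{\mathcal{M}}\le C\,(r_1/r_2)^{D+n}\big\|R_{r_2}(|\xi|^2)\big\|_{\mathcal{M}},\qquad r_1\ge r_2,\ \xi\in\mathcal{A}_\mathcal{M}.$$

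For the first estimate I would start from the elementary operator identities, valid on $\mathcal{A}_\mathcal{M}$,
$$(I-S_s)^M=\sum_{j=1}^{M}\binom{M}{j}(-1)^{j+1}(I-S_{js}),\qquad(I-S_{js})f=\sum_{i=0}^{j-1}S_{is}(I-S_s)f,$$
which reduce the claim to bounding $\big\|R_{\sqrt[m]{s}}|S_{is}(I-S_s)f|^2\big\|_{\mathcal{M}}$ for $1\le i\le M-1$ (the term $i=0$ being immediate). Applying the Kadison--Schwarz inequality $|S_{is}g|^2\le S_{is}(|g|^2)$, then the semigroup majorization $S_{is}(|g|^2)\le\sum_{\ell\ge0}\sigma_{\ell,is}^*R_{\ell,\sqrt[m]{is}}(|g|^2)\sigma_{\ell,is}$, the contractivity of $R_{\sqrt[m]{s}}$ on positive elements, the operator Cauchy--Schwarz inequality, the finiteness of $k_{\mathcal{Q}}$, and the average domination condition (ii), one obtains $\big\|R_{\sqrt[m]{s}}|S_{is}(I-S_s)f|^2\big\|_{\mathcal{M}}\lesssim\big\|R_{\sqrt[m]{is}}|(I-S_s)f|^2\big\|_{\mathcal{M}}$; the doubling for averages then replaces $\sqrt[m]{is}$ by $\sqrt[m]{s}$ up to a constant depending only on $M$. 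Summing over $i,j$ and taking $\sup_s$ yields the desired inequality.

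For the reverse estimate I would use the Calder\'on reproducing formula associated with the Littlewood--Paley function $\psi(\lambda)=(1-e^{-\lambda})^Me^{-\lambda}$, chosen so that $\psi(sL)=(I-S_s)^MS_s$ and $\int_0^\infty\psi(u)\,\tfrac{du}{u}=c_M\in(0,\infty)$; together with $\psi(0)=0$ this gives
$$(I-S_t)f=\frac{1}{c_M}\int_0^\infty(I-S_t)(I-S_s)^MS_s\,f\;\frac{ds}{s}=\frac{1}{c_M}\int_0^\infty(I-S_s)^M(S_s-S_{s+t})f\;\frac{ds}{s},$$
and the task becomes to bound $\big\|R_{\sqrt[m]{t}}\big|(I-S_s)^M(S_s-S_{s+t})f\big|^2\big\|_{\mathcal{M}}^{1/2}\le\varepsilon(s,t)\,\|f\|_{{BMO}_{\mathcal{Q},M}^c}$ with $\int_0^\infty\varepsilon(s,t)\,\tfrac{ds}{s}\lesssim_M1$. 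In the regime $s\le t$ one estimates the two heat regularizations $S_s(I-S_s)^Mf$ and $S_{s+t}(I-S_s)^Mf$ (both at scales $\le2^{1/m}\sqrt[m]{t}$) by Kadison--Schwarz together with the semigroup majorization and average domination of the P-metric, which — because $R_{\sqrt[m]{t}}$ is contractive on positives — loses no power of $t$; the decay $\varepsilon(s,t)\lesssim(s/t)^{\gamma}$ is then produced by playing the $M$-fold cancellation of $(I-S_s)^M$ off against the smoothness of $S_{s+t}$ at scale $\gtrsim\sqrt[m]{t}$, read off from Lemma \ref{PLPL} after a dyadic-in-frequency splitting of the multiplier. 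In the regime $s\ge t$ the factor $S_s-S_{s+t}=S_s(I-S_t)$ supplies a gain: after transferring through $\pi$ and applying the $L_2$-Gaussian bound of Lemma \ref{PL} (to $e^{-sL}$ and $e^{-(s+t)L}$) together with the annular decomposition $1=\sum_{k\ge1}b_{k,\sqrt[m]{s}}$, Lemma \ref{ccccmp} and the strong doubling condition, the extra factor $1-e^{-t\lambda}$ contributes $\varepsilon(s,t)\lesssim t/s$ while the residual factor is dominated by $\big\|R_{\sqrt[m]{s}}|(I-S_s)^Mf|^2\big\|_{\mathcal{M}}^{1/2}\le\|f\|_{{BMO}_{\mathcal{Q},M}^c}$. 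Integrating $\varepsilon(s,t)$ over $s$ then closes the estimate.

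The step I expect to be the main obstacle is exactly this last integration: because $\psi$ carries only $M$-th order cancellation, the Calder\'on integral is not absolutely convergent at the level of the seminorms, so one must extract honest decay $\varepsilon(s,t)$ in both regimes, and for $s<t$ this forces one to beat the polynomial doubling loss $(t/s)^{(D+n)/(2m)}$ — which appears whenever the averaging radius $\sqrt[m]{t}$ exceeds the natural radius $\sqrt[m]{s}$ of a dyadic piece — by combining the $M$-fold cancellation with the Gaussian/Besov decay of Lemmas \ref{PL}--\ref{PLPL}. A subsidiary but necessary point is to justify the displayed identity and the passage from $\|R_{\sqrt[m]{t}}|\cdot|^2\|_{\mathcal{M}}^{1/2}$ of the integral to the integral of the $\|R_{\sqrt[m]{t}}|\cdot|^2\|_{\mathcal{M}}^{1/2}$'s in a topology compatible with the operator-valued-weight seminorms, for which one uses the $L_2$-convergence of the Calder\'on formula on $\overline{{\rm Ran}\,L}$, the lower semicontinuity of $\|\cdot\|_{L_\infty^c(\mathcal{N}_\rho;E_\rho)}$, and the a priori mapping property $F(L)\colon\mathcal{A}_\mathcal{M}\to\mathcal{A}_\mathcal{M}$.
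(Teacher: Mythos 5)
Your forward inequality $\|f\|_{BMO_{\mathcal{Q},M}^c}\lesssim_M\|f\|_{BMO_{\mathcal{Q},1}^c}$ is correct and essentially the paper's argument (Kadison--Schwarz plus the semigroup majorization/average domination of the P-metric, with a harmless change of radius). The reverse direction, however, has a genuine gap at the $s\to0$ end of your Calder\'on integral. You apply the reproducing formula directly to $(I-S_t)f$, so the integrand is $(I-S_s)^M(S_s-S_{s+t})f$ with symbol $(1-e^{-s\lambda})^M e^{-s\lambda}(1-e^{-t\lambda})$. In the regime $s\le t$ this symbol is of size $1$ on $\lambda\sim 1/s$: the $M$-fold cancellation only helps for $\lambda\ll 1/s$, and the smoothing you invoke sits in $S_{s+t}$, i.e.\ in only one of the two terms of $S_s-S_{s+t}$; the piece $(I-S_s)^MS_sf$ has no smoothness at scale $\sqrt[m]{t}$ whatsoever. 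Already for $L=-\Delta$ on $\mathbb{R}^n$, taking $f$ with spectrum near $|\xi|\sim s^{-1/2}$ and $\|f\|_{BMO_{\mathcal{Q},M}}\sim1$ gives $\|R_{\sqrt[m]{t}}|(I-S_s)^MS_sf|^2\|^{1/2}\sim 1$, so no bound $\varepsilon(s,t)\lesssim(s/t)^\gamma$ can hold for the full integrand, and $\int_0^t\varepsilon(s,t)\,ds/s$ diverges. The paper avoids this by an algebraic reduction \emph{before} invoking the reproducing formula: writing $I=(I-S_t)^{M-1}+\sum_{k=1}^{M-1}(-1)^k\binom{M-1}{k}S_{kt}$, so that $(I-S_t)f$ splits into $(I-S_t)^Mf$ (trivially controlled) plus terms $S_{kt}(I-S_t)f$ with $k\ge1$. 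Each of the latter carries a smoothing factor $e^{-ktL}$ matched to the ball radius $\sqrt[m]{t}$; after inserting $\int_0^\infty\Phi(sL)(I-S_s)^M\,ds/s=I$, the frequency-localized multiplier $e^{-kt\lambda}(1-e^{-t\lambda})\Phi(s\lambda)$ then has Besov norm $\lesssim\min\{(s/t)^N,t/s\}$ precisely because $e^{-kt\lambda}\approx e^{-kt/s}$ on $\lambda\sim 1/s$. That exponential factor with $k\ge1$ is what makes the small-$s$ end converge and also absorbs the doubling loss $(t/s)^{D/(2m)}$; it is absent from your decomposition.

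A secondary issue: in the regime $s\ge t$ you propose to get the factor $t/s$ by "applying Lemma \ref{PL} to $e^{-sL}$ and $e^{-(s+t)L}$" on each annulus. Estimating the two semigroups separately destroys exactly the cancellation that produces $t/s$; each term alone is only $O(1)$. To retain simultaneously the annular off-diagonal decay and the gain $1-e^{-t\lambda}\lesssim t/s$ on $\lambda\sim1/s$, you need a joint estimate for the difference — the paper gets it by first localizing in frequency with $\Phi(sL)$ and then controlling the Besov norm $\|\delta_{s^{-1}}F_{k,t,s}\|_{B^N}\lesssim t/s$ through the Fourier-inversion Lemma \ref{PLPL}. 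Your multiplier is not compactly supported in frequency, so Lemma \ref{PLPL} does not apply directly; you would need either the same frequency localization or time-derivative Gaussian bounds (e.g.\ via Cauchy's formula from the complex-time estimate), neither of which is set up in your outline.
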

\begin{proof}
We first show that for any $f\in \mathcal{A}_\mathcal{M}$,
\begin{align}\label{pree}
\|f\|_{{ BMO}_{\mathcal{Q},2}^c}\lesssim \|f\|_{{ BMO}_{\mathcal{Q},1}^c}.
\end{align}
To show this, we apply Kadison-Schwarz's inequality and Lemma \ref{embed} to deduce that
\begin{align*}
{\rm LHS}&\leq \sup\limits_{t>0}\big\|R_{\sqrt[m]{t}}|(I-e^{-tL})f|^2\big\|_\mathcal{M}^{1/2}+\sup\limits_{t>0}\big\|R_{\sqrt[m]{t}}|e^{-tL}(I-e^{-tL})f|^2\big\|_\mathcal{M}^{1/2}\\
&\leq \sup\limits_{t>0}\big\|R_{\sqrt[m]{t}}|(I-e^{-tL})f|^2\big\|_\mathcal{M}^{1/2}+\sup\limits_{t>0}\big\|e^{-tL}(I-e^{-tL})f\big\|_\mathcal{M}\\
&\lesssim \sup\limits_{t>0}\big\|R_{\sqrt[m]{t}}|(I-e^{-tL})f|^2\big\|_\mathcal{M}^{1/2}+\sup\limits_{t>0}\big\|e^{-tL}|(I-e^{-tL})f|^2\big\|_\mathcal{M}^{1/2}\\
&\lesssim \sup\limits_{t>0}\big\|R_{\sqrt[m]{t}}|(I-e^{-tL})f|^2\big\|_\mathcal{M}^{1/2}.
\end{align*}
This ends the proof of inequality \eqref{pree}. By induction on $M\geq 1$, one has
\begin{align*}
\|f\|_{{ BMO}_{\mathcal{Q},M}^c}\lesssim \|f\|_{{ BMO}_{\mathcal{Q},1}^c},
\end{align*}
where the implicit constant depends on $M$.

It remains to show the converse direction of the above inequality.
To begin with, for any $M\geq 2$, we observe that
\begin{align*}
I&=(I-e^{-tL})^{M-1}+(I-(I-e^{-tL})^{M-1})\\
&=(I-e^{-tL})^{M-1}+\sum_{k=1}^{M-1}(-1)^{k}\frac{(M-1)!}{k!(M-1-k)!}e^{-ktL}.
\end{align*}
Then
\begin{align*}
\left\|R_{\sqrt[m]{t}}|(I-e^{-tL})f|^2\right\|_{\mathcal{M}}^{1/2}
&\leq\left\|R_{\sqrt[m]{t}}|(I-e^{-tL})^Mf|^2\right\|_{\mathcal{M}}^{1/2}\\
&+\sum_{k=1}^{M-1}\frac{(M-1)!}{k!(M-1-k)!}\left\|R_{\sqrt[m]{t}}|e^{-ktL}(I-e^{-tL})f|^2\right\|_{\mathcal{M}}^{1/2}.
\end{align*}

This means that it suffices to show that for any $1\leq k\leq M-1$,
\begin{align}\label{sf}
\sup\limits_{t>0}\left\|R_{\sqrt[m]{t}}|e^{-ktL}(I-e^{-tL})f|^2\right\|_{\mathcal{M}}^{1/2}
\lesssim \sup\limits_{t>0}\left\|R_{\sqrt[m]{t}}|(I-e^{-tL})^Mf|^2\right\|_{\mathcal{M}}^{1/2}.
\end{align}
To show this, we first choose $\Phi$ be a smooth cut-off function supported in $[1/2,2]$ such that
\begin{align*}
\int_0^{\infty}\Phi(s\lambda)(I-e^{-s\lambda})^M\frac{ds}{s}=1,\ {\rm for}\ \lambda> 0.
\end{align*}
This, together with the topological condition (Tii), implies that
\begin{align}\label{convergeTT}
&\pi(e^{-ktL}(I-e^{-tL}))(\rho_2(f))q_{\sqrt[m]{t}}E_\rho(q_{\sqrt[m]{t}})^{-\frac 12}\nonumber\\&=\int_0^{\infty}\pi\Big(e^{-ktL}(I-e^{-tL})\Phi(sL)(I-e^{-sL})^M\Big)(\rho_2(f))q_{\sqrt[m]{t}}E_\rho(q_{\sqrt[m]{t}})^{-\frac 12}\frac{ds}{s},
\end{align}
where the improper integral converges in the strong operator topology.
This, together with the intertwining equality \eqref{transfer} and formula \eqref{keyine}, implies that
\begin{align}\label{djdj}
&\ \left\|R_{\sqrt[m]{t}}|e^{-ktL}(I-e^{-tL})f|^2\right\|_{\mathcal{M}}^{1/2}\nonumber\\
&=\left\|\pi(e^{-ktL}(I-e^{-tL}))(\rho_2(f))q_{\sqrt[m]{t}}E_\rho(q_{\sqrt[m]{t}})^{-\frac{1}{2}}\right\|_{L_\infty^c(\mathcal{N}_{\rho};E_\rho)}\nonumber\\
&\leq\int_{0}^{\infty} \left\|\pi\left(e^{-ktL}(I-e^{-tL})\Phi(sL)(I-e^{-sL})^M\right)(\rho_2(f))q_{\sqrt[m]{t}}E_\rho(q_{\sqrt[m]{t}})^{-\frac{1}{2}}\right\|_{L_\infty^c(\mathcal{N}_{\rho};E_\rho)}\frac{ds}{s}.
\end{align}

To continue, let $\psi\in C_c^\infty(\mathbb{R})$ supported in $\psi\in[1/8,4]$ and $\psi(\lambda)=1$ for $\lambda\in[1/2,2]$.
For simplicity, we denote $$F_{k,t,s}(\lambda):=e^{-kt\lambda}(I-e^{-t\lambda})\Phi(s\lambda),$$ which satisfies for any $N\geq 0$, there is a constant $C=C_N>0$ such that
\begin{align}\label{doBesov}
\|\delta_{s^{-1}}F_{k,t,s}\|_{B^N}
&\leq \left\|e^{-\frac{kt\lambda}{s}}(1-e^{-\frac{t\lambda}{s}})\Phi(\lambda)\right\|_{C^{N+2}}\nonumber\\
&\leq \left\|e^{-\frac{kt\lambda}{s}}\Phi(\lambda)\right\|_{C^{N+2}}\left\|(1-e^{-\frac{t\lambda}{s}})\psi(\lambda)\right\|_{C^{N+2}}\nonumber\\
&\leq C_N\min\left\{\Big(\frac{t}{s}\Big)^{-N},\Big(\frac{t}{s}\Big)\right\},
\end{align}
where $C^N$ is the space consisting of all $N$-order differentiable functions, endowed with the norm:
$$\|f\|_{C^N}:=\sum_{k=1}^N\left\|\frac{d^k}{dx^k}f\right\|_{L_\infty}.$$
By inequality \eqref{djdj},
\begin{align*}
\left\|R_{\sqrt[m]{t}}|e^{-ktL}(I-e^{-tL})f|^2\right\|_{\mathcal{M}}^{1/2}
&\leq \int_0^t\left\| \left(\pi(F_{k,t,s}(L))\rho_2((I-e^{-sL})^Mf)\right)q_{\sqrt[m]{t}}E_\rho(q_{\sqrt[m]{t}})^{-\frac{1}{2}}\right\|_{L_\infty^c(\mathcal{N}_{\rho};E_\rho)}\frac{ds}{s}\\
&+ \int_{t}^\infty\left\| \left(\pi(F_{k,t,s}(L))\rho_2((I-e^{-sL})^Mf)\right)q_{\sqrt[m]{t}}E_\rho(q_{\sqrt[m]{t}})^{-\frac{1}{2}}\right\|_{L_\infty^c(\mathcal{N}_{\rho};E_\rho)}\frac{ds}{s}\nonumber\\
&=:{\rm I}+{\rm II}.
\end{align*}

To estimate term ${\rm I}$,  we follow the estimate of \eqref{eeee} and then apply Lemma \ref{PLPL} to conclude that
\begin{align}
{\rm I}&\leq \int_{0}^{t}\sum_{j\geq 1} \left\| \left(\pi(F_{k,t,s}(L))(\rho_2((I-e^{-sL})^Mf)b_{j,\sqrt[m]{t}})\right)q_{\sqrt[m]{t}}E_\rho(q_{\sqrt[m]{t}})^{-\frac{1}{2}}\right\|_{L_\infty^c(\mathcal{N}_{\rho};E_\rho)}\frac{ds}{s}\nonumber\\
&\leq C\int_{0}^{t} \sum_{j\geq 1}2^{-jN}\|\delta_{s^{-1}}F_{k,t,s}\|_{B^{N}} \left\| \rho_2((I-e^{-sL})^Mf)b_{j,\sqrt[m]{t}}E_\rho(q_{\sqrt[m]{t}})^{-\frac{1}{2}}\right\|_{L_\infty^c(\mathcal{N}_{\rho};E_\rho)}\frac{ds}{s}\nonumber\\
&\leq C\int_{0}^{t} \sum_{j\geq 1}2^{-jN}\Big(\frac{t}{s}\Big)^{-N} \left\| \rho_2((I-e^{-sL})^Mf)b_{j,\sqrt[m]{t}}E_\rho(q_{\sqrt[m]{t}})^{-\frac{1}{2}}\right\|_{L_\infty^c(\mathcal{N}_{\rho};E_\rho)}\frac{ds}{s}\nonumber
.\end{align}
This, together with the strong doubling condition (ANi), implies that
\begin{align*}
{\rm I}
&\leq C\int_{0}^{t}\sum_{j\geq 1}2^{-jN}\Big(\frac{t}{s}\Big)^{-N}\Big(\frac{2^j\sqrt[m]{t}}{\sqrt[m]{s}}\Big)^{\frac{D}{2}}2^{\frac{jn}{2}}\left\| \rho_2((I-e^{-sL})^Mf)q_{\sqrt[m]{s}}E_\rho(q_{\sqrt[m]{s}})^{-\frac{1}{2}}\right\|_{L_\infty^c(\mathcal{N}_{\rho};E_\rho)}\frac{ds}{s}\\
&\leq C\int_{0}^{t}\Big(\frac{s}{t}\Big)^{N-\frac{D}{2m}}\left\| \rho_2((I-e^{-sL})^Mf)q_{\sqrt[m]{s}}E_\rho(q_{\sqrt[m]{s}})^{-\frac{1}{2}}\right\|_{L_\infty^c(\mathcal{N}_{\rho};E_\rho)}\frac{ds}{s}\\
&\leq C\sup\limits_{s>0}\left\|R_{\sqrt[m]{s}}|(I-e^{-sL})^Mf|^2\right\|_{\mathcal{M}}^{1/2},
\end{align*}
provided we choose $N$ be a sufficiently large constant.

To estimate term ${\rm II}$,  we follow the estimate of \eqref{eeee} and then apply Lemma \ref{PLPL} to get that
\begin{align}
{\rm II}
&\leq \int_{t}^{\infty}\sum_{j\geq 1} \left\| \left(\pi(F_{k,t,s}(L))(\rho_2((I-e^{-sL})^Mf)b_{j,\sqrt[m]{s}})\right)q_{\sqrt[m]{t}}E_\rho(q_{\sqrt[m]{t}})^{-\frac{1}{2}}\right\|_{L_\infty^c(\mathcal{N}_{\rho};E_\rho)}\frac{ds}{s}\nonumber\\
&\leq C\int_{t}^{\infty} \sum_{j\geq 1}2^{-jN}\|\delta_{s^{-1}}F_{k,t,s}\|_{B^{N}} \left\| \rho_2((I-e^{-sL})^Mf)b_{j,\sqrt[m]{s}}E_\rho(q_{\sqrt[m]{s}})^{-\frac{1}{2}}\right\|_{L_\infty^c(\mathcal{N}_{\rho};E_\rho)}\frac{ds}{s}\nonumber\\
&\leq C\int_{t}^{\infty} \sum_{j\geq 1}2^{-jN}\left(\frac{t}{s}\right) \left\| \rho_2((I-e^{-sL})^Mf)b_{j,\sqrt[m]{s}}E_\rho(q_{\sqrt[m]{s}})^{-\frac{1}{2}}\right\|_{L_\infty^c(\mathcal{N}_{\rho};E_\rho)}\frac{ds}{s}\nonumber.
\end{align}
This, together with the strong doubling condition (ANi), implies that
\begin{align*}
{\rm II}
&\leq C\int_{t}^{\infty} \sum_{j\geq 1}2^{-j(N-\frac{n}{2}-\frac{D}{2})}\left(\frac{t}{s}\right) \left\| \rho_2((I-e^{-sL})^Mf)q_{\sqrt[m]{s}}E_\rho(q_{\sqrt[m]{s}})^{-\frac{1}{2}}\right\|_{L_\infty^c(\mathcal{N}_{\rho};E_\rho)}\frac{ds}{s}\\
&\leq C\int_{t}^{\infty} \left(\frac{t}{s}\right) \left\| \rho_2((I-e^{-sL})^Mf)q_{\sqrt[m]{s}}E_\rho(q_{\sqrt[m]{s}})^{-\frac{1}{2}}\right\|_{L_\infty^c(\mathcal{N}_{\rho};E_\rho)}\frac{ds}{s}
\\&\leq C\sup\limits_{s>0}\left\|R_{\sqrt[m]{s}}|(I-e^{-sL})^Mf|^2\right\|_{\mathcal{M}}^{1/2},
\end{align*}
provided we choose $N$ be a sufficiently large constant.

Combining the estimates of terms I and II, we complete the proof of Proposition \ref{emb}.
\end{proof}
\bigskip

\section{Proof of Theorem \ref{main}}\label{333}
\setcounter{equation}{0}
\subsection{$L_\infty\rightarrow BMO$ boundedness}\label{edp}
In this subsection, we will provide a proof for the first statement of Theorem \ref{main}. For future use in the complex interpolation, we will show a stronger argument under the same condition. That is, we will show that there is a constant $C>0$ independent of $t>0$ and $y\in\mathbb{R}$ such that
\begin{align}\label{strongar}
\left\|e^{itL}(I+L)^{-s(1+iy)}f\right\|_{{ BMO}_{\mathcal{S}}(\mathcal{M})}\leq C(1+|y|)^{s+3}(1+|t|)^{n/2}\|f\|_{\mathcal{M}},\ {\rm for}\ s\geq n/2.
\end{align}
Then the first statement of Theorem \ref{main} follows directly by choosing $y=0$.

To begin with, we will apply a simple argument to reduce the proof to the column BMO estimate. To this end, we assume that $e^{itL}(I+L)^{-s(1+iy)}:\mathcal{A}_\mathcal{M}\rightarrow BMO_{\mathcal{S}}^c(\mathcal{M})$ is bounded by $C(1+|y|)^{s+1}(1+|t|)^{n/2}$ with a constant $C>0$ independent of $t\in\mathbb{R}$.
Note that $$(e^{itL}(I+L)^{-s(1+iy)}f)^*=e^{-itL}(I+L)^{-s(1-iy)}f^*,$$
then
 \begin{align*}
\left\|e^{itL}(I+L)^{-s(1+iy)}f\right\|_{{ BMO}_{\mathcal{S}}^r(\mathcal{M})}&=\left\|e^{-itL}(I+L)^{-s(1-iy)}f^*\right\|_{{ BMO}_{\mathcal{S}}^c(\mathcal{M})}\\&\leq C(1+|y|)^{s+1}(1+|t|)^{n/2}\|f^*\|_{\mathcal{M}}\\&\leq C(1+|y|)^{s+1}(1+|t|)^{n/2}\|f\|_{\mathcal{M}}.
\end{align*}

This, together with Lemma \ref{emb}, implies that it suffices to establish the column BMO estimate of $e^{itL}(I+L)^{-s(1+iy)}$ by showing that for a given large positive constant $M$, there is a constant $C>0$ independent of $t$ such that for any $f\in \mathcal{A}_{\mathcal{M}}$,
\begin{align}\label{keygoal}
\|F(L)f\|_{{ BMO}_{\mathcal{Q},M}^c(\mathcal{M})}\leq C(1+|y|)^{s+1}(1+|t|)^{n/2}\|f\|_{\mathcal{M}},
\end{align}
where we denote $F(u)=e^{itu}(1+u)^{-s(1+iy)}$ for simplicity.

To show this, recalling that $\rho_2$ is a  $*$-homomorphism from $\mathcal{M}$ to $\mathcal{N}_\rho$, we apply  equality \eqref{transfer}  and the property of operator-valued weight to get that
\begin{align*}
\left\|R_{\sqrt[m]{\lambda}}|(I-e^{-\lambda L})^MF(L)f|^2\right\|_\mathcal{M}^{1/2}
&=\left\| \left(\pi((I-e^{-\lambda L})^MF(L))(\rho_2(f))\right)q_{\sqrt[m]{\lambda}}E_\rho(q_{\sqrt[m]{\lambda}})^{-\frac{1}{2}}\right\|_{L_\infty^c(\mathcal{N}_{\rho};E_\rho)}\\
&\leq \left\| \left(\pi((I-e^{-\lambda L})^MF(L))(\rho_2(f)q_{4(1+|t|)\sqrt[m]{\lambda}})\right)q_{\sqrt[m]{\lambda}}E_\rho(q_{\sqrt[m]{\lambda}})^{-\frac{1}{2}}\right\|_{L_\infty^c(\mathcal{N}_{\rho};E_\rho)}\\
&+\left\| \left(\pi((I-e^{-\lambda L})^MF(L))(\rho_2(f)(1-q_{4(1+|t|)\sqrt[m]{\lambda}}))\right)q_{\sqrt[m]{\lambda}}E_\rho(q_{\sqrt[m]{\lambda}})^{-\frac{1}{2}}\right\|_{L_\infty^c(\mathcal{N}_{\rho};E_\rho)}\\
&=:{\rm I}+{\rm II}.
\end{align*}

To estimate term I, we apply the inequality \eqref{abcd} and condition (ALii) to get that
\begin{align*}
{\rm I}&=\left\| \left(\pi((I-e^{-\lambda L})^MF(L))(\rho_2(f)q_{4(1+|t|)\sqrt[m]{\lambda}})\right)q_{\sqrt[m]{\lambda}}E_\rho(q_{\sqrt[m]{\lambda}})^{-\frac{1}{2}}\right\|_{L_\infty^c(\mathcal{N}_{\rho};E_\rho)}\nonumber\\
&\leq\left\| \left(\pi((I-e^{-\lambda L})^MF(L))(\rho_2(f)q_{4(1+|t|)\sqrt[m]{\lambda}})\right)E_\rho(q_{\sqrt[m]{\lambda}})^{-\frac{1}{2}}\right\|_{L_\infty^c(\mathcal{N}_{\rho};E_\rho)}\nonumber\\
&=\left\| \pi((I-e^{-\lambda L})^MF(L))(\rho_2(f)q_{4(1+|t|)\sqrt[m]{\lambda}}E_\rho(q_{\sqrt[m]{\lambda}})^{-\frac{1}{2}})\right\|_{L_\infty^c(\mathcal{N}_{\rho};E_\rho)}
.\end{align*}
Next we apply conditions (Oi) and (ANi) to get that
\begin{align*}
{\rm I}
&\leq \left\|\rho_2(f)q_{4(1+|t|)\sqrt[m]{\lambda}}E_\rho(q_{\sqrt[m]{\lambda}})^{-\frac{1}{2}}\right\|_{L_\infty^c(\mathcal{N}_{\rho};E_\rho)}\nonumber\\
&\leq \left\|E_{\rho}(q_{\sqrt[m]{\lambda}})^{-\frac{1}{2}}E_{\rho}(q_{4(1+|t|)\sqrt[m]{\lambda}})E_{\rho}(q_{\sqrt[m]{\lambda}})^{-\frac{1}{2}}\right\|_{\mathcal{M}}^{1/2}\|f\|_{\mathcal{M}}\nonumber\\
&\leq C(1+|t|)^{n/2}\|f\|_{\mathcal{M}}.
\end{align*}

To estimate term II, we let $\phi$ be a non-negative $C_c^\infty$ function on $\mathbb{R}$ such that supp$\phi\subset (1/4,1)$ and
$$\sum_{\ell\in\mathbb{Z}}\phi(2^{-\ell}u)=1,\ \ {\rm for}\ {\rm any}\ u>0.$$
For simplicity we set $\phi_\ell(u)=\phi(2^{-\ell}u)$. Pick $\nu_0\in\mathbb{Z}_{+}$ such that
$$\left\{\begin{array}{ll}4\leq 2^{\nu_0-\ell (m-1)/m}\sqrt[m]{\lambda}<8, &{\rm if}\ 2^{-\ell(m-1)/m}\sqrt[m]{\lambda}<1,\\ \nu_0=2, &{\rm if}\ 2^{-\ell(m-1)/m}\sqrt[m]{\lambda}\geq 1.\end{array}\right.$$
Observe that if $2^{\ell(m-1)/m}< \sqrt[m]{\lambda}$, then one has
$$q_{2^{\nu_0}(1+|t|)\sqrt[m]\lambda}=q_{ 4(1+|t|)\sqrt[m]\lambda}q_{2^{\nu_0}(1+|t|)\sqrt[m]\lambda}.$$
This, together with a similar procedure as estimating \eqref{djdj} and \eqref{eeee} to deal with the convergence of the infinity sum, implies that
\begin{align}\label{vvv3}
{\rm II}
&\leq \sum_{\ell\in\mathbb{Z}}\left\|\left(\pi ((I-e^{-\lambda L})^MF(L)\phi_\ell(L))(\rho_2(f)(1-q_{4(1+|t|)\sqrt[m]{\lambda}}))\right)q_{\sqrt[m]{\lambda}}E_\rho(q_{\sqrt[m]{\lambda}})^{-\frac{1}{2}}\right\|_{L_\infty^c(\mathcal{N}_{\rho};E_\rho)}\\
&\leq \sum_{\ell:2^{\ell(m-1)/m}> \sqrt[m]{\lambda}}\bigg\| \left(\pi((I-e^{-\lambda L})^MF(L)\phi_\ell(L))(\rho_2(f)((1-q_{4(1+|t|)\sqrt[m]{\lambda}})q_{2^{\nu_0}(1+|t|)\sqrt[m]{\lambda}}))\right)\times\nonumber\\
&\hspace{10.3cm}\times q_{\sqrt[m]{\lambda}}E_\rho(q_{\sqrt[m]{\lambda}})^{-\frac{1}{2}}\bigg\|_{L_\infty^c(\mathcal{N}_{\rho};E_\rho)}\nonumber\\
&+ \sum_{\ell\in\mathbb{Z}}\sum_{\nu\geq \nu_0}\left\| \left(\pi((I-e^{-\lambda L})^MF(L)\phi_\ell(L))(\rho_2(f)b_{\nu,(1+|t|)\sqrt[m]{\lambda}})\right)q_{\sqrt[m]{\lambda}}E_\rho(q_{\sqrt[m]{\lambda}})^{-\frac{1}{2}}\right\|_{L_\infty^c(\mathcal{N}_{\rho};E_\rho)}\nonumber\\
&=:{\rm (a)}+{\rm (b)}.\nonumber
\end{align}

For term (a), we observe that the function $F_\ell(u):=(1-e^{-\lambda u})^MF(u)\phi_{\ell}(u)$ satisfies
$$\|F_\ell\|_\infty\leq C\min\{1,(2^{\ell}\lambda)^{M}\}\min\{1,2^{-\ell s}\}\leq C\min\{1,(2^{\ell}\lambda)^{M}\}2^{-\ell n/2}.$$
This, in combination with inequality \eqref{abcd} and the conditions (ALii) and (Oi), yields
 \begin{align}\label{substi}
{\rm (a)}
&\leq \sum_{\ell:2^{\ell(m-1)/m}> \sqrt[m]{\lambda}}\left\|\left(\pi ((I-e^{-\lambda L})^MF(L)\phi_\ell(L))(\rho_2(f)((1-q_{4(1+|t|)\sqrt[m]{\lambda}})q_{2^{\nu_0}(1+|t|)\sqrt[m]\lambda}))\right)E_\rho(q_{\sqrt[m]{\lambda}})^{-\frac{1}{2}}\right\|_{L_\infty^c(\mathcal{N}_{\rho};E_\rho)}\nonumber\\
&= \sum_{\ell:2^{\ell(m-1)/m}> \sqrt[m]{\lambda}}\left\|\pi ((I-e^{-\lambda L})^MF(L)\phi_\ell(L))(\rho_2(f)((1-q_{4(1+|t|)\sqrt[m]{\lambda}})q_{2^{\nu_0}(1+|t|)\sqrt[m]\lambda})E_\rho(q_{\sqrt[m]{\lambda}})^{-\frac{1}{2}})\right\|_{L_\infty^c(\mathcal{N}_{\rho};E_\rho)}\nonumber\\
&\leq C\sum_{\ell:2^{\ell(m-1)/m}> \sqrt[m]{\lambda}}\min\{1,(2^{\ell}\lambda)^{M}\}2^{-\ell n/2}\left\| \rho_2(f)((1-q_{4(1+|t|)\sqrt[m]{\lambda}})q_{2^{\nu_0}(1+|t|)\sqrt[m]\lambda})E_\rho(q_{\sqrt[m]{\lambda}})^{-\frac{1}{2}}\right\|_{L_\infty^c(\mathcal{N}_{\rho};E_\rho)}\nonumber\\
&\leq C\sum_{\ell:2^{\ell(m-1)/m}> \sqrt[m]{\lambda}}\min\{1,(2^{\ell}\lambda)^{M}\}2^{-\ell n/2}\left\|E_\rho(q_{\sqrt[m]{\lambda}})^{-\frac{1}{2}}E_{\rho}((1-q_{4(1+|t|)\sqrt[m]{\lambda}})q_{2^{\nu_0}(1+|t|)\sqrt[m]\lambda})E_\rho(q_{\sqrt[m]{\lambda}})^{-\frac{1}{2}}\right\|_{\mathcal{M}}^{1/2}\|f\|_{\mathcal{M}}.
\end{align}
To continue, by doubling inequality \eqref{ANi} and the definition of $\nu_0$, we see that if $2^{\ell(m-1)/m}> \sqrt[m]{\lambda}$, then
\begin{align*}
&\left\|E_\rho(q_{\sqrt[m]{\lambda}})^{-\frac{1}{2}}E_{\rho}((1-q_{4(1+|t|)\sqrt[m]{\lambda}})q_{2^{\nu_0}(1+|t|)\sqrt[m]\lambda})E_\rho(q_{\sqrt[m]{\lambda}})^{-\frac{1}{2}}\right\|_{\mathcal{M}}^{1/2}\\
&\leq C \left\|E_\rho(q_{\sqrt[m]{\lambda}})^{-\frac{1}{2}}E_{\rho}(q_{2^{\nu_0}(1+|t|)\sqrt[m]\lambda})E_\rho(q_{\sqrt[m]{\lambda}})^{-\frac{1}{2}}\right\|_{\mathcal{M}}^{1/2}\\
&\leq C(1+|t|)^{n/2}2^{\frac{\nu_0 n}{2}}\\
&\leq C(1+|t|)^{n/2}(2^{\ell(m-1)/m}(\sqrt[m]{\lambda})^{-1})^{n/2}.
\end{align*}
Substituting the above inequality into \eqref{substi}, we conclude that
 \begin{align*}
{\rm (a)}
&\leq C(1+|t|)^{n/2}\sum_{\ell\in\mathbb{Z}}\min\{1,(2^{\ell}\lambda)^{M}\}(2^{\ell}\lambda)^{-\frac{n}{2m}}\|f\|_{\mathcal{M}}.
\\&\leq C(1+|t|)^{n/2}\|f\|_{\mathcal{M}}.
\end{align*}

For term (b) we applying Lemma \ref{PLPL} to conclude that
\begin{align}\label{besti}
{\rm (b)}&\leq \sum_{\ell\in\mathbb{Z}}\sum_{\nu\geq \nu_0}(2^{\ell/m}2^\nu(1+|t|)\sqrt[m]{\lambda})^{-N}\|\delta_{2^\ell}(F_\ell)\|_{B^N}\left\| \rho_2(f)b_{\nu,(1+|t|)\sqrt[m]{\lambda}}E_\rho(q_{\sqrt[m]{\lambda}})^{-\frac{1}{2}}\right\|_{L_\infty^c(\mathcal{N}_{\rho};E_\rho)}.
\end{align}
To continue, we claim that
\begin{align}\label{Bnorm}
\|\delta_{2^\ell}(F_\ell)\|_{B^N}\leq C\max\{1,2^{(N-n/2)\ell}\}(1+|y|)^{s+1}(1+|t|)^N\min\{1,(2^{\ell}\lambda)^{M}\}.
\end{align}
To show this, we let $\psi\in C_c^\infty(\mathbb{R})$ supported in $\psi\in[1/8,2]$ and $\psi(u)=1$ for $u\in[1/4,1]$. We have that for any $\ell\in\mathbb{Z}$,
\begin{align*}
\|\delta_{2^\ell}(F_\ell)\|_{B^N}
&=\|\phi(\cdot)(1-e^{2^{\ell}\lambda\cdot})^MF(2^{\ell}\cdot)\|_{B^N}\\
&\leq \|\psi(\cdot)(1-e^{2^{\ell}\lambda\cdot})^M(1+2^\ell \cdot)^{-isy}\|_{B^N}\|\phi(\cdot)G(2^{\ell}\cdot)\|_{B^N}\\
&\leq \|\psi(\cdot)(1-e^{2^{\ell}\lambda\cdot})^M(1+2^\ell \cdot)^{-isy}\|_{C^{N+2}}\|\phi(\cdot)G(2^{\ell}\cdot)\|_{B^N}\\
&\leq C(1+|y|)^{N+2}\min\{1,(2^{\ell}\lambda)^{M}\}\|\phi(\cdot)G(2^{\ell}\cdot)\|_{B^N},
\end{align*}
where we denote $G(u):=(1+u)^{-s}e^{itu}$.

Let us estimate $\|\phi(\cdot)G(2^{\ell}\cdot)\|_{B^N}$. It follows from the Fourier transform $\mathcal{F}(\phi(\cdot) G(2^\ell\cdot))$ of $\phi(\cdot)G(2^\ell\cdot)$ that
\begin{align*}
\mathcal{F}(\phi(\cdot) G(2^\ell\cdot))(\tau)=\int_{\mathbb{R}}\phi(u)\frac{e^{i(2^{\ell}t-\tau)u}}{(1+2^{\ell}u)^{s}}du.
\end{align*}
Using integration by parts, we get that for any $a\in\mathbb{N}$,
\begin{align*}
|\mathcal{F}(\phi(\cdot) G(2^\ell\cdot))(\tau)|&\leq C_N\min\{1,2^{-\ell s}\}(1+|2^\ell t-\tau|)^{-a}\\
&\leq C_N\min\{1,2^{-\ell n/2}\}(1+|2^\ell t-\tau|)^{-a},\end{align*}
which means that
\begin{align*}
\|\phi(u)G(2^\ell u)\|_{B^N}
&\leq C\min\{1,2^{-\ell n/2}\}\int_{\mathbb{R}}(1+|2^{\ell}t-\tau|)^{-s}(1+|2^{\ell}t-\tau|)^Nd\tau\\
&\leq C\max\{1,2^{(N-n/2)\ell}\}(1+|t|)^N.
\end{align*}
Combining these estimates together, we end the proof of \eqref{Bnorm}.

Now, choosing $N=s+\frac{1}{2}$  such that $N>n/2$, we substitute \eqref{Bnorm} into \eqref{besti} and then apply the doubling inequality \eqref{ANi} to see that
\begin{align}
{\rm (b)}
&\leq C(1+|y|)^{s+3}\sum_{\ell\in\mathbb{Z}}\sum_{\nu\geq \nu_0}(2^{\ell/m}2^\nu\sqrt[m]{\lambda})^{-N}\max\{1,2^{(N-s)\ell}\}\min\{1,(2^{\ell}\lambda)^{M}\}\nonumber\\
&\hspace{6.0cm}\times\left\| \rho_2(f)b_{\nu,(1+|t|)\sqrt[m]{\lambda}}E_\rho(q_{\sqrt[m]{\lambda}})^{-\frac{1}{2}}\right\|_{L_\infty^c(\mathcal{N}_{\rho};E_\rho)}\nonumber\\
&\leq C(1+|y|)^{s+3}\sum_{\ell\in\mathbb{Z}}\sum_{\nu\geq \nu_0}(2^{\ell/m}2^\nu\sqrt[m]{\lambda})^{-N}\max\{1,2^{(N-s)\ell}\}\min\{1,(2^{\ell}\lambda)^{M}\}\nonumber\\
&\hspace{6.0cm}\times\left\| E_\rho(q_{\sqrt[m]{\lambda}})^{-\frac{1}{2}} E_\rho(q_{2^\nu(1+|t|)\sqrt[m]{\lambda}})E_\rho(q_{\sqrt[m]{\lambda}})^{-\frac{1}{2}}\right\|_{\mathcal{M}}^{1/2}\|f\|_\mathcal{M}\nonumber\\
&\leq C(1+|y|)^{s+3}(1+|t|)^{n/2}\sum_{\ell\in\mathbb{Z}}\sum_{\nu\geq \nu_0}2^{-\nu(N-n/2)}(2^{\ell/m}\sqrt[m]{\lambda})^{-N}\max\{1,2^{(N-s)\ell}\}\min\{1,(2^{\ell}\lambda)^{M}\}\|f\|_\mathcal{M}.
\end{align}
From the definition of $\nu_0\geq 0$, we deduce that
\begin{align}
{\rm (b)}
&\leq C(1+|y|)^{s+3}(1+|t|)^{n/2}\sum_{\ell>0}(2^{-\ell(m-1)/m}\sqrt[m]{\lambda})^{N-n/2}(2^{\ell/m}\sqrt[m]{\lambda})^{-N}2^{(N-s)\ell}\min\{1,(2^{\ell}\lambda)^M\}\|f\|_\mathcal{M}\nonumber\\
&+C(1+|y|)^{s+2}(1+|t|)^{n/2}\sum_{\ell\leq0}(2^{\ell}\lambda)^{-\frac{N}{m}}\min\{1,(2^{\ell}\lambda)^M\}\|f\|_\mathcal{M}\nonumber\\
&\leq C(1+|y|)^{s+3}(1+|t|)^{n/2}\sum_{\ell>0}(2^{\ell}\lambda)^{-\frac{n}{2m}}\min\{1,(2^{\ell}\lambda)^M\}\|f\|_\mathcal{M}\nonumber\\
&+C(1+|y|)^{s+3}(1+|t|)^{n/2}\sum_{\ell\leq0}(2^{\ell}\lambda)^{-\frac{N}{m}}\min\{1,(2^{\ell}\lambda)^M\}\|f\|_\mathcal{M}\nonumber\\
&\leq C(1+|y|)^{s+3}(1+|t|)^{n/2}\|f\|_{\mathcal{M}}.
\end{align}
Combining the estimates for (a) and (b), we deduce that
$${\rm II}\leq C(1+|y|)^{s+3}(1+|t|)^{n/2}\|f\|_{\mathcal{M}}.$$
Therefore, the proof of the first statement of Theorem \ref{main} is completed.
\subsection{$L_p$ boundedness}\label{lpbod}
Before providing the proof for the remaining part of our main theorem, we first recall the complex interpolation from \cite[Section 4]{Bergh}. An interpolation pair $(X_0,X_1)$ are two Banach spaces that can be continuously embedded into a topological vector space $X$. Consider the sum space $X_0+X_1$ and intersection space $X_0\cap X_1$, which are Banach spaces equipped with the norm
\begin{align*}
   &\|x\|_{X_0+X_1}:=\inf\{\|x_0\|_{X_0}+\|x_1\|_{X_1}:x=x_0+x_1, x_0\in X_1, x_1\in X_1\} \\
&\|x\|_{X_0\cap X_1}:=\max\{\|x\|_{X_0},\|x\|_{X_1}\}.
\end{align*}
Let $S=\{z\in\mathbb{C}:0<\Re z<1\}$ denote the open strip in the complex plane and $\bar{S}$ be its closure $\bar{S}=\{z\in\mathbb{C}:0\leq\Re z\leq1\}.$ Let $\mathcal{F}(X_0,X_1)$ denote the family of all functions $f$ with values in $X_0+X_1$ and satisfying:
\begin{enumerate}
  \item $f$ is bounded and continuous on $\bar{S}$ and analytic on $S$;
  \item $f(it)\in X_0$ and $f(1+it)\in X_1$ for all $t\in\mathbb{R}$;
  \item $\|f(it)\|_{X_0},\|f(1+it)\|_{X_1}$ tend to $0$ as $|t|\rightarrow\infty$.
\end{enumerate}
Then $\mathcal{F}(X_0,X_1)$ is a Banach space equipped with  the norm given by
\begin{equation*}
  \|f\|_{\mathcal{F}(X_0,X_1)}:=\max\Big\{\sup_{t\in\mathbb{R}}\|f(it)\|_{X_0}, \sup_{t\in\mathbb{R}}\|f(1+it)\|_{X_1}\Big\}.
\end{equation*}
For $0\leq\theta\leq1$, we define a subspace $[X_0,X_1]_\theta$ of $X_0+X_1$, which consists of all $x\in X_0+X_1$ such that $f(\theta)=x$ for some $f\in \mathcal{F}(X_0,X_1)$, and $[X_0,X_1]_\theta$ is a Banach space with the norm
\begin{equation*}
  \|x\|_{[X_0,X_1]_\theta}=\inf\{\|f\|_\mathcal{F}:f(\theta)=x, f\in \mathcal{F}(X_0,X_1)\}.
\end{equation*}
Let $\mathcal{D}(X_0,X_1)$ be a subspace of $ \mathcal{F}(X_0,X_1)$, which consists of the functions of the form
$f(z)=\sum_{\ell=1}^m g_\ell(z)x_\ell$ with each $x_\ell\in X_0\cap X_1$ and with each $g_\ell(z)$ being a scalar function such that it is  continuous on $\bar{S}$, analytic on $S$ and vanishes at infinity.
We will use the following lemma which was established in \cite{MR253054}:
\begin{lemma}\label{anlem}
 For any $0< \theta<1$, $X_0\cap X_1$ is dense in $[X_0,X_1]_\theta$. Besides, for any $x\in X_0\cap X_1$,
\begin{equation*}
  \|x\|_{[X_0,X_1]_\theta}=\inf\{\|f\|_{\mathcal{F}(X_0,X_1)}:f(\theta)=x, f\in \mathcal{D}(X_0,X_1)\}.
\end{equation*}
\end{lemma}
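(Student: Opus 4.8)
The plan is to reduce both assertions to a single approximation statement: $\mathcal{D}(X_0,X_1)$ is dense in $\mathcal{F}(X_0,X_1)$, and moreover the approximant can be arranged to have the same value at $\theta$ as the function it approximates whenever that value lies in $X_0\cap X_1$. Granting this, the two claims are immediate. For the density of $X_0\cap X_1$ in $[X_0,X_1]_\theta$: given $x\in[X_0,X_1]_\theta$, pick $f\in\mathcal{F}(X_0,X_1)$ with $f(\theta)=x$, and approximate $f$ by $g=\sum_\ell g_\ell\,x_\ell\in\mathcal{D}(X_0,X_1)$ with $\|f-g\|_{\mathcal{F}(X_0,X_1)}<\varepsilon$; then $g(\theta)=\sum_\ell g_\ell(\theta)x_\ell\in X_0\cap X_1$ and, since evaluation at $\theta$ is a contraction $\mathcal{F}(X_0,X_1)\to[X_0,X_1]_\theta$, $\|x-g(\theta)\|_{[X_0,X_1]_\theta}<\varepsilon$. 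For the norm formula: the inclusion $\mathcal{D}(X_0,X_1)\subset\mathcal{F}(X_0,X_1)$ gives $\geq$ trivially, while $\leq$ follows by applying the value-preserving approximation to an arbitrary $f\in\mathcal{F}(X_0,X_1)$ with $f(\theta)=x\in X_0\cap X_1$ and $\|f\|_{\mathcal{F}(X_0,X_1)}$ within $\varepsilon$ of $\|x\|_{[X_0,X_1]_\theta}$.

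To prove the approximation statement I would first regularize in the imaginary direction. Replace $f\in\mathcal{F}(X_0,X_1)$ by $f_\varepsilon(z):=e^{\varepsilon(z^2-\theta^2)}f(z)$. Since $|e^{\varepsilon(z^2-\theta^2)}|=e^{\varepsilon((\Re z)^2-(\Im z)^2-\theta^2)}\leq e^{\varepsilon}e^{-\varepsilon(\Im z)^2}$ on $\bar S$, the function $f_\varepsilon$ lies in $\mathcal{F}(X_0,X_1)$, fixes the value $f_\varepsilon(\theta)=f(\theta)$, and decays like a Gaussian as $|\Im z|\to\infty$; using that $\|f(it)\|_{X_0},\|f(1+it)\|_{X_1}\to 0$ one checks $\|f_\varepsilon-f\|_{\mathcal{F}(X_0,X_1)}\to 0$ as $\varepsilon\to 0$. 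Convolving $f_\varepsilon$ with a mollifier in the $\Im z$ variable, which preserves analyticity on $S$ and continuity on $\bar S$ and does not increase the $\mathcal{F}$-norm, then yields an $\mathcal{F}$-approximant that is in addition smooth and uniformly continuous with Gaussian decay. For such a regularized $f$ one has a boundary representation $f(z)=\int_{\mathbb{R}}\mathcal{P}_0(z,t)f(it)\,dt+\int_{\mathbb{R}}\mathcal{P}_1(z,t)f(1+it)\,dt$ with kernels $\mathcal{P}_0,\mathcal{P}_1$ adapted to the strip, the two integrals converging absolutely as Bochner integrals in $X_0$ and $X_1$ respectively thanks to the decay; discretizing them by Riemann sums produces a finite sum $\sum_\ell g_\ell(z)x_\ell$ with scalar functions $g_\ell$ continuous on $\bar S$, analytic on $S$ and vanishing at infinity, and coefficients $x_\ell\in X_0\cap X_1$, i.e.\ an element of $\mathcal{D}(X_0,X_1)$ that is $\mathcal{F}$-close to $f$.

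The genuinely delicate point — and what I expect to be the main obstacle — is exactly this last step: a generic $f\in\mathcal{F}(X_0,X_1)$ need not take values in $X_0\cap X_1$ at interior points of $S$, only in the Gagliardo completion $\widetilde{X_0}\cap\widetilde{X_1}$, so a naive Riemann sum of a Cauchy or Poisson representation produces scalar multipliers that are singular on $\partial S$ and coefficients lying only in $X_0$ or only in $X_1$. The fix, due to Calder\'on, is to choose $\mathcal{P}_0,\mathcal{P}_1$ so that each building block $g_\ell(\cdot)\,x_\ell$ genuinely belongs to $\mathcal{F}(X_0,X_1)$ with $x_\ell\in X_0\cap X_1$ — one works with the reproducing kernel of the strip together with an auxiliary analytic factor vanishing on one boundary line and Poisson-type on the other, approximating beforehand the continuous, rapidly decaying boundary data by simple functions — and to arrange the quadrature to be exact at $z=\theta$, which secures the value-preserving property. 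All the $\mathcal{F}$-norm estimates for the errors are routine once the Gaussian decay of the first step is available; I would carry out the kernel bookkeeping carefully, as in \cite{MR253054} and \cite[Section~4]{Bergh}, since that is the only technical ingredient.
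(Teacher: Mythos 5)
The paper does not actually prove this lemma: it is quoted verbatim from Stafney \cite{MR253054} (see also \cite[Lemma 4.2.3]{Bergh}), so there is no internal proof to compare against. Your reductions are correct and standard: both assertions do follow from density of $\mathcal{D}(X_0,X_1)$ in $\mathcal{F}(X_0,X_1)$ together with a value-preserving approximation at $\theta$, evaluation at $\theta$ is indeed a contraction into $[X_0,X_1]_\theta$, and the regularization $f_\varepsilon(z)=e^{\varepsilon(z^2-\theta^2)}f(z)$ and the subsequent mollification in $\Im z$ are exactly the first two steps of the classical argument.

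The gap is that the core construction --- producing an element of $\mathcal{D}$ close to $f$ in $\mathcal{F}$ --- is deferred, and the mechanism you sketch for it would not work. Discretizing a boundary representation $f(z)=\int\mathcal{P}_0(z,t)f(it)\,dt+\int\mathcal{P}_1(z,t)f(1+it)\,dt$ yields coefficients $f(it_k)\in X_0$ and $f(1+it_k)\in X_1$ that need not lie in $X_0\cap X_1$, and the definition of $\mathcal{D}$ requires $x_\ell\in X_0\cap X_1$; your proposed repair via ``an auxiliary analytic factor vanishing on one boundary line'' is vacuous, since a function analytic on $S$ and continuous on $\bar S$ that vanishes on a whole boundary line is identically zero by reflection, and even a block $g_\ell x_\ell\in\mathcal{F}$ with $x_\ell\notin X_0\cap X_1$ would still fail to belong to $\mathcal{D}$. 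The construction that actually works (Calder\'on; \cite[Lemma 4.2.3]{Bergh}) is not a boundary integral but a Fourier--Laplace representation $g(z)=\frac{1}{2\pi}\int_{\mathbb{R}}a(\xi)e^{\xi z}\,d\xi$ of the regularized function, where $a(\xi)=e^{-\sigma\xi}\,\widehat{g(\sigma+i\cdot)}(\xi)$ is independent of $\sigma\in[0,1]$ by Cauchy's theorem and therefore lies in $X_0\cap X_1$; Fej\'er-type sums of this integral give the elements $e^{\delta z^2}\sum_k e^{\lambda_k z}a_k$ of $\mathcal{D}$. Separately, your value-preservation at $\theta$ is not secured: the mollification already changes $f(\theta)$, ``exact quadrature at $z=\theta$'' cannot undo that, and the natural correction term $e^{\delta(z^2-\theta^2)}(x-g(\theta))$ has $\mathcal{F}$-norm controlled by $\|x-g(\theta)\|_{X_0\cap X_1}$, which your argument does not make small (you only control the much weaker $[X_0,X_1]_\theta$-norm of this difference). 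Closing that last point is precisely the nontrivial content of Stafney's lemma beyond the density statement, and it needs the convergence of the Fourier--Laplace sums at $z=\theta$ in the $X_0\cap X_1$-norm, which your outline does not provide.
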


Throughout the sequel in this subsection, we assume that the semigroup $(S_t)_{t\geq0}$ is a  Markov semigroup admitting a Markov dilation (see \cite{MR2885593,MR4178915} for the specific definition). We shall see that these assumptions are only used to guarantee the $L_p$--interpolation theory available (see Lemma \ref{interpolation}), but it is still a challenging problem to minimize the assumptions on $(S_t)_{t\geq0}$ such that the $L_p$--interpolation theory works.

Note that for $1< p\leq\infty$,
\begin{equation*}
  \ker (A_p):=\{f\in dom_p(A), Af=0\}=\{f\in L_p(\mathcal{M}), S_tf=f, \forall t>0 \}.
\end{equation*}
Besides, under the Markovian assumption of $(S_t)_{t\geq0}$, it is easy to show that $(S_t)_{t\geq0}$ is trace preserving and that the null space of ${ BMO}_{\mathcal{S}}^{\dagger}(\mathcal{M})$ is the fixed point space $\ker A_\infty$. Now let $L_p^0(\mathcal{M})$ be the complemented subspace of $L_p(\mathcal{M})$ given by
\begin{equation*}
  L_p^0(\mathcal{M}):=L_p(\mathcal{M})/\ker (A_p)=\{f\in L_p(\mathcal{M}), \lim_{t\rightarrow\infty}S_tf=0 \}.
\end{equation*}
Here the limit is taken with respect to the $L_p(\mathcal{M})$--norm for $1<p<\infty$ and is taken with respect to the weak-$*$ topology for $p=\infty$. Recall from \cite{MR2885593,MR2276775} that there is a canonical decomposition on $L_p(\mathcal{M})$ for $1<p<\infty$:
\begin{equation}\label{decom}
  L_p(\mathcal{M})= \ker (A_p)\oplus L_p^0(\mathcal{M}).
\end{equation}
For any $1<p<\infty$, let $P_p$ be the contractive positive projection from $L_p(\mathcal{M})$ onto $\ker (A_p)$. By mean ergodic theorem, for any $x\in L_p(\mathcal{M})$, $P_p(x)$ can be written as the $L_p$--limit of ergodic mean given by
$$M_s(x):=\frac{1}{s}\int_0^sS_t(x)dt.$$
Note that $P_p$ and $P_q$ coincide on $\ker (A_p)\cap \ker (A_q)$ for two disparate $p$, $q$, which allows us to denote the $P_p$'s by $P$.

The following interpolation result was established in \cite{MR2885593}.
\begin{lemma} \label{interpolation}
 Assume that $(S_t)_{t\geq0}$ is a Markov semigroup admitting a Markov dilation,  for any $2< p<\infty$, we have
 \begin{equation*}
   [ L_2^0(\mathcal{M}),{ BMO}_{\mathcal{S}}(\mathcal{M})]_{1-2/p}= L_p^0(\mathcal{M}).
 \end{equation*}
\end{lemma}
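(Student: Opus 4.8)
This identity is \cite{MR2885593}; I only sketch the strategy, which is to transfer the assertion to the known interpolation theorem for noncommutative martingale $BMO$ spaces via the Markov dilation. First I would invoke the Markov dilation of $\mathcal S$ (in the sense of \cite{MR2885593,MR4178915}): a semifinite von Neumann algebra $\mathcal N$ carrying a filtration $(\mathcal N_t)_{t\ge 0}$ with trace-preserving conditional expectations $(\mathcal E_t)$ and trace-preserving $*$-homomorphisms $\pi_t:\mathcal M\to\mathcal N$, $\pi:=\pi_0$, such that $\mathcal E_t\pi_s=\pi_tS_{s-t}$ for $s\ge t$. In this way, after lifting by $\pi$, each $S_t$ is realized as a martingale conditional expectation, the fixed-point space $\ker A$ corresponds to the martingale ``constants'', and the splitting \eqref{decom} matches the martingale splitting of $L_p(\mathcal N)$.

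Next I would turn $BMO_{\mathcal S}(\mathcal M)$ into a complemented subspace of a martingale $BMO$ space. Dominating the oscillation $\sup_{t>0}\|S_t|(I-S_t)f|^2\|_{\mathcal M}^{1/2}$ by the martingale oscillation of $\pi(f)$ (and likewise for the row term) gives a bounded inclusion $\pi:BMO^{\dagger}_{\mathcal S}(\mathcal M)\hookrightarrow BMO^{\dagger}_{\mathrm{mart}}(\mathcal N)$, $\dagger\in\{c,r\}$; conversely, the trace-preserving conditional expectation $\Pi=\mathcal E_{\pi(\mathcal M)}:\mathcal N\to\pi(\mathcal M)\simeq\mathcal M$ satisfies $\Pi\circ\pi=\mathrm{id}$, is a contraction on every $L_p(\mathcal N)$, and (using the dilation identities) is bounded $BMO^{\dagger}_{\mathrm{mart}}(\mathcal N)\to BMO^{\dagger}_{\mathcal S}(\mathcal M)$. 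Hence one and the same pair $(\pi,\Pi)$ realizes $BMO_{\mathcal S}(\mathcal M)$ as a complemented subspace of $BMO_{\mathrm{mart}}(\mathcal N)$ and simultaneously $L_p^0(\mathcal M)$ as the corresponding complemented subspace of $L_p^0(\mathcal N)$, for $2\le p<\infty$.

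Finally I would invoke the noncommutative martingale $BMO$ interpolation theorem (Pisier--Xu, Musat, Junge--Musat):
\[
[L_2^0(\mathcal N),BMO_{\mathrm{mart}}(\mathcal N)]_{1-2/p}=L_p^0(\mathcal N)
\]
with equivalent norms for $2<p<\infty$, and transport it through $(\pi,\Pi)$ using the stability of complex interpolation under a bounded idempotent compatible with both endpoints of the couple; this yields $[L_2^0(\mathcal M),BMO_{\mathcal S}(\mathcal M)]_{1-2/p}=L_p^0(\mathcal M)$. The hard part is the second step: a plain Markov dilation provides only one of the two $BMO$ inequalities, so one must work with a reversed (standard) dilation, and even then passing from the continuum of scales $t>0$ in the semigroup $BMO$ norm to a lacunary sequence comparable to the martingale filtration requires a John--Nirenberg/doubling-in-$t$ argument; one must also carry the column and row components separately and check that $\ker A$ is sent precisely to the martingale constants so that the $L_2^0$ and $L_p^0$ endpoints line up. The remaining ingredients --- the Rota-type dilation giving $L_p$-contractivity of $\Pi$, and the complemented-subspace stability of complex interpolation --- are standard.
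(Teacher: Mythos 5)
The paper itself gives no proof of this lemma: it simply records it as "established in \cite{MR2885593}" (Junge--Mei). Your proposal does essentially the same thing --- defer to that reference --- and your sketch of its strategy (realize $BMO_{\mathcal S}$ as complemented in a martingale $BMO$ via the Markov dilation, apply the noncommutative martingale $BMO$--$L_p$ interpolation, and transfer through the pair $(\pi,\Pi)$, with the caveats about reversed dilations and matching $\ker A$ to the martingale constants) is an accurate account of how the cited result is actually proved.
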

Now we provide a proof for the second statement of our main theorem.

\begin{proof}[The proof of $L_p$ boundedness:]
By duality and density argument, it suffices to show that when $2<p<\infty$, we have
 \begin{align*}
\left\|e^{itL}(I+L)^{-s(1-2/p)}f\right\|_{L_{p}(\mathcal{M})}\leq C(1+|t|)^{\sigma_p}\|f\|_{L_p(\mathcal{M})},\ {\rm for}\ s\geq \frac{n}{2}
\end{align*}
and all $f\in\mathcal{A}_{\mathcal{M}}$.
Note that if $x\in \ker{A_p}$, we have $e^{itL}(I+L)^{-s(1-2/p)}x=x$. Let $P$ be the projection from $L_p(\mathcal{M})$ to $\ker{A_p}$, then for any $f\in \mathcal{A}_{\mathcal{M}}$, we have
$$P(e^{itL}(I+L)^{-s(1-2/p)}f)=e^{itL}(I+L)^{-s(1-2/p)}P(f)=P(f).$$ Hence,
\begin{align*}
  \left\|e^{itL}(I+L)^{-s(1-2/p)}f\right\|_{L_{p}(\mathcal{M})} &\leq\left\|P(e^{itL}(I+L)^{-s(1-2/p)}f)\right\|_{L_{p}(\mathcal{M})}+\left\|(I-P)e^{itL}(I+L)^{-s(1-2/p)}(f)\right\|_{L_{p}(\mathcal{M})} \\
 &=\|P(f)\|_{L_{p}(\mathcal{M})}+\left\|(I-P)e^{itL}(I+L)^{-s(1-2/p)}(f)\right\|_{L_{p}^0(\mathcal{M})} \\
 &\leq \|f\|_{L_{p}(\mathcal{M})}+\left\|(I-P)e^{itL}(I+L)^{-s(1-2/p)}(f)\right\|_{L_{p}^0(\mathcal{M})}.
\end{align*}

Then it suffices to show that
\begin{equation}\label{L_p}
  \left\|(I-P)e^{itL}(I+L)^{-s(1-2/p)}(f)\right\|_{L_{p}^0(\mathcal{M})}
  \leq C(1+|t|)^{\sigma_p}\|f\|_{L_p(\mathcal{M})} ,\ {\rm for}\ s\geq \frac{n}{2}.
\end{equation}
We will use a complex interpolation argument to show the estimate \eqref{L_p}. To this end, suppose that $f\in\mathcal{A}_{\mathcal{M}}$ and $\|f\|_{L_p(\mathcal{M})}=1$,
then by Lemma \ref{anlem}, for any $\epsilon>0$,
there exists an $F\in \mathcal{D}(L_2(\mathcal{M}), \overline{\mathcal{A}_\mathcal{M}})$, such that
$F(1-2/p)=f$ and
\begin{equation*}
  \|F\|_{\mathcal{F}(L_2(\mathcal{M}), \overline{\mathcal{A}_\mathcal{M}})}\leq \|f\|_{[L_2(\mathcal{M}), \overline{\mathcal{A}_\mathcal{M}}]_{1-2/p}}+\epsilon\leq \|f\|_{L_p(\mathcal{M})}+\epsilon=1+\epsilon,
\end{equation*}
where we denote the closure of $\mathcal{A}_\mathcal{M}$ under $L_\infty$ norm by
$\overline{\mathcal{A}_\mathcal{M}}$, and where in the second inequality we used the embedding
$$L_p(\mathcal{M})\cap\mathcal{A}_\mathcal{M}\subset [L_2(\mathcal{M}), \overline{\mathcal{A}_\mathcal{M}}]_{1-2/p}.$$
To continue, we define
 \begin{equation*}
   T(z):=e^{z^2}(1+|t|)^{-nz/2}(I-P)e^{itL}(I+L)^{-sz}(F(z)).
   \end{equation*}

Recall that for any $x\in L_\infty(\mathcal{M})$ with $Px\in \ker A_\infty$, one has
$\|Px\|_{{ BMO}_{\mathcal{S}}(\mathcal{M})}=0$. Therefore, by inequality \eqref{strongar}, when $z=1+iy, y\in\mathbb{R}$,  we have
\begin{align*}\label{BMO}
 \left\|T(1+iy)\right\|_{{ BMO}_{\mathcal{S}}(\mathcal{M})}&\leq e^{1-y^2}(1+|t|)^{-n/2}\left\|e^{itL}(I+L)^{-s(1+iy)}(F(1+iy))\right\|_{{ BMO}_{\mathcal{S}}(\mathcal{M})}\\
 &\leq C\|F(1+iy)\|_{L_\infty(\mathcal{M})}.
\end{align*}
Besides, when $z=iy, y\in\mathbb{R}$, we have
\begin{align*}
 \|T(iy)\|_{L_2^0(\mathcal{M})}&=\left\|e^{-y^2}(1+|t|)^{-iny/2}(I-P)e^{itL}(I+L)^{-iys}(F(iy))\right\|_{L_2^0(\mathcal{M})} \\ &\leq C \left\|(I-P)e^{itL}(I+L)^{-iys}(F(iy))\right\|_{L_2^0(\mathcal{M})}\\&\leq C \left\|e^{itL}(I+L)^{-iys}(F(iy))\right\|_{L_2(\mathcal{M})}\\
&\leq C\|F(iy)\|_{L_2(\mathcal{M})}.
\end{align*}
 Hence, $T\in\mathcal{F}(L_2^0(\mathcal{M}),{{ BMO}_{\mathcal{S}}(\mathcal{M})})$ and
 \begin{equation}\label{inter}
   \|T\|_{\mathcal{F}(L_2^0(\mathcal{M}),{{ BMO}_{\mathcal{S}}(\mathcal{M})})}\leq C\|F\|_{\mathcal{F}(L_2(\mathcal{M}), \overline{\mathcal{A}_\mathcal{M}})}\leq C(1+\varepsilon).
 \end{equation}
 Note that
 \begin{equation*}
(I-P)e^{itL}(I+L)^{-s(1-2/p)}(F(1-2/p))=e^{-(1-2/p)^2}(1+|t|)^{\sigma_p}T(1-2/p).
\end{equation*}
This, in combination with Lemma \ref{interpolation}, implies
 \begin{align*}
   \left\|(I-P)e^{itL}(I+L)^{-s(1-2/p)}(f)\right\|_{L_{p}^0(\mathcal{M})}&= \left\|(I-P)e^{itL}(I+L)^{-s(1-2/p)}(f)\right\|_{[ L_2^0(\mathcal{M}),{ BMO}_{\mathcal{S}}(\mathcal{M})]_{1-2/p}}\\
   &\leq e^{-(1-2/p)^2}(1+|t|)^{\sigma_p}\|T\|_{\mathcal{F}(L_2^0(\mathcal{M}),{BMO}_{\mathcal{S}}(\mathcal{M}))}\\&\leq C(1+|t|)^{\sigma_p}(1+\epsilon).
 \end{align*}
Letting $\epsilon\rightarrow 0$, we get \eqref{L_p}. Therefore, the proof of the second statement of our main theorem is completed.
\end{proof}
\bigskip

\section{Applications}\label{Appi}
\setcounter{equation}{0}

\subsection{Operator-valued setting}\label{Operator-valued setting}
In this subsection we apply our algebraic approach to investigate Schr\"{o}dinger groups in the operator-valued setting, which opens a door to study Schr\"{o}dinger groups over quantum Euclidean space, matrix algebra and group von Neumann algebra.

Let $(X,d,\mu)$ be a doubling metric space such that the doubling condition \eqref{doubling} holds. It follows that there exist $C>0$ and $0\leq D\leq n$ such that
\begin{align}\label{dis}
\mu(B(y,r))\leq C\left(1+\frac{d(x,y)}{r}\right)^{D}\mu(B(x,r))
\end{align}
independent of $x,y\in X$ and $r>0$. Indeed, equality \eqref{dis} holds trivially in Euclidean setting with $D=0$ and holds directly in the abstract setting with $D=n$ due to the triangle inequality for the metric $d$ in combination with the doubling condition \eqref{doubling}.

Now suppose that $L$ is a non-negative self-adjoint operator on $L_2(X)$ and its associated heat kernel satisfies ${\rm (GE_m)}$. The estimate \eqref{GE} holds for many (sub-)elliptic differential operators of order $m$ (see for example \cite{MR4150932, MR1103113,MR1715407,MR2569498,MR2124040,MR1218884} and the references therein).

Let $\mathcal{M}$ be a semifinite von Neumann algebra equipped with a normal semifinite faithful trace $\tau$. Then $\hat{\mathcal{S}}=(e^{-tL}\otimes id_\mathcal{M})$   is called a semicommutative heat semigroup. Denote by $\hat{L}=L\otimes id_{\mathcal{M}}$ the generator of $\hat{S}$. Next, consider the algebra of essentially  bounded operator-valued functions $f:X\rightarrow \mathcal{M}$ equipped with the trace
$${\rm Trace}(f)=\int_X \tau(f(x))d\mu(x).$$
Observe that its weak-$*$ closure $\mathcal{N}=L_\infty(X)\botimes \mathcal{M}$ is a von Neumann algebra. Now we let $\mathcal{Q}=\{(R_{j,t},\sigma_{j,t}):j\in\mathbb{N},t>0\}$ be a P-metric associated to the original bounded semigroup $\mathcal{S}=\{e^{-tL}\}_{t\geq 0}$ on $L_\infty(X)$, which is constructed explicitly in Section \ref{Hilbert modules}. We assume in this subsection that $p_t(x,y)\geq 0$ for any $x,y\in X$ to ensure the positivity assumption of the semigroup $\mathcal{S}$, then $\mathcal{Q}$ satisfies an operator-valued generalization of semigroup majorization and average domination condition. Therefore, $\mathcal{Q}$ extends to a P-metric $\mathcal{Q}_\mathcal{N}$ in $\mathcal{N}$ by tensorizing with $id_\mathcal{M}$ and ${\bf 1}_\mathcal{M}$, which can be expressed explicitly as
$$\mathcal{Q}_\mathcal{N}=\{(R_{j,t}\otimes id_{\mathcal{M}},\sigma_{j,t}\otimes {\bf 1}_{\mathcal{M}}):j\in\mathbb{N},t>0\}.$$

Before providing our application, we first establish two auxiliary lemmas. For simplicity, we set $f_x(y):=f(x,y)$ for any $x,y\in X$.
\begin{lemma}\label{keytran}
Let $T$ be a linear bounded operator on $L_2(X)$, then for any $f\in L_2^c(X)\botimes \mathcal{N}$,
\begin{align}\label{prin}
\left\|\left(\int_X|(T\otimes id_{\mathcal{M}})(f_x)(y)|^2d\mu(y)\right)^{1/2}\right\|_{\mathcal{N}}\leq \|T\|_{L_2(X)\rightarrow L_2(X)}\left\|\left(\int_X|f(x,y)|^2d\mu(y)\right)^{1/2}\right\|_{\mathcal{N}}.
\end{align}
\end{lemma}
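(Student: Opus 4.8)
The plan is to reduce \eqref{prin} to the classical fact that amplifying a bounded operator on $L_2(X)$ by the identity on a Hilbert space does not increase its operator norm. First I would recast both sides of \eqref{prin} in the language of operator-valued weights from Section \ref{Operator-valued weights}: set $\mathcal{N}_\rho=L_\infty(X\times X)\botimes\mathcal{M}$, where the first copy of $X$ carries the variable $y$ and the second carries $x$, and let $E_\rho$ be the normal operator-valued weight obtained by integrating out the $y$-variable, so that its range is $\mathcal{N}=L_\infty(X)\botimes\mathcal{M}$. With $g(x,y):=[(T\otimes id_\mathcal{M})f_x](y)$, the right-hand side of \eqref{prin} is $\|E_\rho(|f|^2)\|_{\mathcal{N}}^{1/2}$ and the left-hand side is $\|E_\rho(|g|^2)\|_{\mathcal{N}}^{1/2}$. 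By the identity \eqref{keyine} it therefore suffices to prove, for every $b\in L_2(\mathcal{N})$,
\begin{equation*}
\tau_{\mathcal{N}}\big(b^*E_\rho(|g|^2)b\big)\le \|T\|^2_{L_2(X)\to L_2(X)}\,\tau_{\mathcal{N}}\big(b^*E_\rho(|f|^2)b\big).
\end{equation*}
By linearity and density it is enough to treat $f$ a finite sum of elementary tensors $\chi\otimes n$ with $\chi\in L_2(X)$ (the $y$-variable) and $n\in\mathcal{N}$; the inequality on this dense subclass shows that $T\otimes id_\mathcal{M}$ (acting in the $y$-variable) extends to a bounded map on $L_2^c(X)\botimes\mathcal{N}$ of norm at most $\|T\|$, and \eqref{prin} for general $f$ then follows by continuity.

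Next I would unwind the left-hand side. Writing $\tau_{\mathcal{N}}=\int_X\tau_\mathcal{M}\,d\mu(x)$ and using $|g(x,y)|^2=g(x,y)^*g(x,y)$ together with the description of $E_\rho$, one obtains
\begin{equation*}
\tau_{\mathcal{N}}\big(b^*E_\rho(|g|^2)b\big)=\int_X\Big(\int_X\big\|[(T\otimes id_\mathcal{M})f_x](y)\,b(x)\big\|_{L_2(\mathcal{M})}^2\,d\mu(y)\Big)\,d\mu(x).
\end{equation*}
The key algebraic point is that, for fixed $x$, right multiplication by the fixed element $b(x)\in L_2(\mathcal{M})$ commutes with the amplification $T\otimes id_\mathcal{M}$, which acts only on the $L_2(X)$-variable $y$; hence $[(T\otimes id_\mathcal{M})f_x]\cdot b(x)=(T\otimes id_{L_2(\mathcal{M})})(f_x\,b(x))$ as elements of $L_2(X;L_2(\mathcal{M}))$. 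Applying the classical inequality $\|(T\otimes id_H)h\|_{L_2(X;H)}\le\|T\|_{L_2(X)\to L_2(X)}\,\|h\|_{L_2(X;H)}$ with $H=L_2(\mathcal{M})$ and $h=f_x\,b(x)$ (immediate by expanding $h$ in an orthonormal basis of $H$) gives, for each $x$,
\begin{equation*}
\int_X\big\|[(T\otimes id_\mathcal{M})f_x](y)\,b(x)\big\|_{L_2(\mathcal{M})}^2\,d\mu(y)\le \|T\|^2\int_X\|f(x,y)b(x)\|_{L_2(\mathcal{M})}^2\,d\mu(y).
\end{equation*}
Integrating over $x$ yields exactly the claimed bound on $\tau_{\mathcal{N}}(b^*E_\rho(|g|^2)b)$, since the right-hand side integrates to $\|T\|^2\,\tau_{\mathcal{N}}(b^*E_\rho(|f|^2)b)$; taking the supremum over unit vectors $b$ and then square roots finishes the proof.

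The argument is essentially routine; the only point that requires some care is the bookkeeping of the two Hilbert-module structures (the $y$-variable on one hand, the $x$-variable and $\mathcal{M}$ on the other) and the verification that $T\otimes id_\mathcal{M}$ genuinely commutes with the module action by $b(x)$, which is what lets the estimate collapse to the scalar operator $T$ amplified to a Hilbert-space-valued $L_2$. I do not anticipate any substantive obstacle beyond this.
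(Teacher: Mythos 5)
Your argument is correct and is essentially the paper's own proof: both reduce the column norm to a supremum of $\tau\big(b^*E_\rho(|\cdot|^2)b\big)$ over $b\in L_2(\mathcal{N})$, commute the amplification $T\otimes id_{\mathcal{M}}$ (acting in $y$) with right multiplication by $b(x)$, and then invoke the classical fact that $T\otimes id_H$ has the same norm on the Hilbert-space-valued $L_2$ space. The only cosmetic difference is that you apply the amplified bound for each fixed $x$ and integrate, whereas the paper applies it once globally on $L_2(\mathcal{N})$; this changes nothing.
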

\begin{proof}
The lemma may be well-known to experts, but we are unable to locate a precise reference. For completeness, we provide a proof here.

To begin with, using the embedding $\mathcal{N}\subset B(\ell_2(\mathcal{N}))$, we obtain
\begin{align}\label{jjjjk}
&\left\|\left(\int_X|(T\otimes id_{\mathcal{M}})(f_x)(y)|^2d\mu(y)\right)^{1/2}\right\|_\mathcal{N}\nonumber\\
&=\sup\limits_{\|h\|_{L_2(\mathcal{N})}\leq 1}\left(\int_X\int_X\tau\left( |((T\otimes id_{\mathcal{M}})(f_x)(y))h(x)|^2\right) d\mu(y)d\mu(x)\right)^{\frac{1}{2}}\nonumber\\
&=\sup\limits_{\|h\|_{L_2(\mathcal{N})}\leq 1}\left(\int_X\int_X\tau\left( |(T\otimes id_{\mathcal{M}})(f_xh(x))(y))|^2\right) d\mu(y)d\mu(x)\right)^{\frac{1}{2}}\nonumber\\
&\leq \|T\otimes id_{\mathcal{M}}\|_{L_2(\mathcal{N})\rightarrow L_2(\mathcal{N})}\sup\limits_{\|h\|_{L_2(\mathcal{N})}\leq 1}\left(\int_X\int_X\tau\left( |f(x,y)h(x)|^2\right) d\mu(y)d\mu(x)\right)^{\frac{1}{2}}.
\end{align}
To continue, recall the fact that every bounded linear operator $T$ on a scalar-valued $L^2$-space can be extended to a bounded operator on an Hilbert space valued $L^2$-space, i.e.
\begin{align}\label{remm}
\|T\otimes id_{\mathcal{M}}\|_{L_2(\mathcal{N})\rightarrow L_2(\mathcal{N})}\leq \|T\|_{L_2(X)\rightarrow L_2(X)}.
\end{align}
Combining the inequalities \eqref{jjjjk} and \eqref{remm}, we finish the proof of Lemma \ref{keytran}.
%
%
\end{proof}
\begin{lemma}\label{apprx}
Let $T\in B(L_2(X))$ and $\{T_k\}_k\subset B(L_2(X))$ such that $\sup\limits_k \|T_k\|_{B(L_2(X))}<+\infty$ and $T_k\rightarrow T$ in the strong operator topology of $B(L_2(X))$. Then for any  $f\in L_2^c(X)\botimes \mathcal{N}$ and $x\in X$,
\begin{enumerate}
  \item $\int_{X}|(T\otimes id_{\mathcal{M}})(f_x\chi_{B(x,s)^c})(y)|^2d\mu(y)\rightarrow 0,\ {\rm as}\ s\rightarrow \infty$,
      \smallskip

  \item $\int_{X}|((T_k-T)\otimes id_\mathcal{M})(f_x)(y)|^2d\mu(y)\rightarrow 0,\ {\rm as}\ k\rightarrow \infty,$
\end{enumerate}
in the strong operator topology of $B(L_2(\mathcal{M}))$.
\end{lemma}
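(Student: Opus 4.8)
The plan is to reduce both assertions to a single statement about convergence in the Hilbert space $L_2(X;L_2(\mathcal{M}))$. The first step is to record the bookkeeping identity: for any $S\in B(L_2(X))$, any $g\in L_2^c(X)\botimes\mathcal{M}$ (so that $\int_X|(S\otimes id_{\mathcal{M}})(g)(y)|^2\,d\mu(y)$ is a well-defined bounded positive element of $\mathcal{M}$ by Lemma \ref{keytran}), and any $\xi\in L_2(\mathcal{M})$,
\begin{align*}
\tau\Big(\xi^*\Big(\int_X\big|(S\otimes id_{\mathcal{M}})(g)(y)\big|^2\,d\mu(y)\Big)\xi\Big)
&=\big\|\big((S\otimes id_{\mathcal{M}})(g)\big)\xi\big\|_{L_2(X;L_2(\mathcal{M}))}^2\\
&=\big\|(S\otimes id_{L_2(\mathcal{M})})(g\xi)\big\|_{L_2(X;L_2(\mathcal{M}))}^2,
\end{align*}
where the first equality just unwinds the definitions (Fubini plus positivity) and the second uses that $S$ acts only on the $X$-variable and hence commutes with right multiplication by $\xi$ on the $\mathcal{M}$-component; note $g\xi\in L_2(X;L_2(\mathcal{M}))$ since $\int_X\|g(y)\xi\|_{L_2(\mathcal{M})}^2\,d\mu(y)=\tau(\xi^*(\int_X|g(y)|^2\,d\mu(y))\xi)<\infty$. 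Since a uniformly bounded net $(a_\alpha)$ of positive operators on $L_2(\mathcal{M})$ tends to $0$ strongly precisely when $\langle a_\alpha\xi,\xi\rangle\to0$ for every $\xi$, it then suffices to prove, for each fixed $\xi\in L_2(\mathcal{M})$, that the last displayed quantity tends to $0$ with the appropriate choices of $S$ and $g$.

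For assertion (1) I would take $S=T$ and $g=f_x\chi_{B(x,s)^c}$. Using the contraction $\|T\otimes id_{L_2(\mathcal{M})}\|\le\|T\|_{B(L_2(X))}$ already recorded in \eqref{remm}, the quantity is at most $\|T\|^2\,\tau(\xi^*a_s\xi)$ with $a_s:=\int_{B(x,s)^c}|f(x,y)|^2\,d\mu(y)$. Because $f\in L_2^c(X)\botimes\mathcal{N}$, for a.e.\ $x$ the full integral $\int_X|f(x,y)|^2\,d\mu(y)$ lies in $\mathcal{M}$ with finite norm, so $\{a_s\}_{s>0}$ is a bounded decreasing net in $\mathcal{M}_+$ with infimum $0$; hence $a_s\to0$ strongly and $\tau(\xi^*a_s\xi)=\|a_s^{1/2}\xi\|_{L_2(\mathcal{M})}^2\to0$ as $s\to\infty$.

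For assertion (2) I would take $g=f_x$ and $S=T_k-T$, so that the quantity to control is $\|((T_k-T)\otimes id_{L_2(\mathcal{M})})(f_x\xi)\|_{L_2(X;L_2(\mathcal{M}))}^2$. The remaining point is the general fact that $T_k\to T$ strongly in $B(L_2(X))$ with $\sup_k\|T_k\|<\infty$ implies $T_k\otimes id_{L_2(\mathcal{M})}\to T\otimes id_{L_2(\mathcal{M})}$ strongly in $B(L_2(X;L_2(\mathcal{M})))$; this is the standard $\varepsilon/3$ argument, using density of finite sums $\sum_i h_i\otimes\eta_i$ in $L_2(X;L_2(\mathcal{M}))$ and the uniform bound $\|T_k\otimes id_{L_2(\mathcal{M})}\|=\|T_k\|\le C$. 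Applying it to the vector $f_x\xi$ closes the argument. I expect the only genuinely delicate point to be the monotone-convergence step in (1) — identifying the topology in which $a_s$ decreases to $0$ and explaining why testing against a fixed $L_2(\mathcal{M})$-vector then kills it — and this is exactly the place where the hypothesis $f\in L_2^c(X)\botimes\mathcal{N}$, rather than merely an $\widehat{\mathcal{M}}_+$-valued object, is needed.
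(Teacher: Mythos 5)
Your proposal is correct and follows essentially the same route as the paper: test the positive operator against a fixed vector $b\in L_2(\mathcal{M})$, absorb $b$ into the function so the quantity becomes an $L_2(X;L_2(\mathcal{M}))$-norm, then conclude (1) by dominated/monotone convergence of the tail integral combined with \eqref{remm}, and (2) by the standard fact that a uniformly bounded strongly convergent sequence amplifies to a strongly convergent sequence on $L_2(\mathcal{N})$. The only difference is presentational: you spell out the bookkeeping identity and the $\varepsilon/3$ amplification argument that the paper leaves implicit.
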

\begin{proof}
To show (1), from the definition of strong operator topology of $B(L_2(\mathcal{M}))$, it suffices to verify that for any $b\in L_2(\mathcal{M})$,
\begin{align*}
\lim_{s\rightarrow\infty}\tau\left(\int_{X}|(T\otimes id_{\mathcal{M}})(f_x\chi_{B(x,s)^c}b)(y)|^2d\mu(y)\right)=0.
\end{align*}
However, this can be verified by domination convergence theorem together with inequality \eqref{remm}.

To show (2), we first note that the conditions imply that $T_k\otimes id_\mathcal{M}$ converges to $T\otimes id_\mathcal{M}$ in the strong operator topology of $B(L_2(\mathcal{N}))$. Therefore, for any $b\in L_2(\mathcal{M})$,
\begin{align*}
\lim_{k\rightarrow \infty}\tau\left(\int_X |((T_k-T)\otimes id_\mathcal{M})(f_xb)(y)|^2d\mu(y)\right)=0.
\end{align*}
This shows (2) and finishes the proof of Lemma \ref{apprx}.
\end{proof}
Then our main theorem applied to the operator-valued setting can be formulated as follows.
%
\begin{theorem}\label{operatorthm}
Assume that $(X, d, \mu)$ is  an  $n$-dimensional doubling metric space.  Suppose that $L$ is a non-negative self-adjoint operator on $L_2(X)$ and its associated heat kernel $p_t(x,y)$
satisfies the Gaussian upper bound \eqref{GE} and $p_t(x,y)\geq 0$ for any $x,y\in X$.
Then  there exists a  constant $C=C(n, m)>0$ independent of $t$ such that for any $f\in L_\infty(\mathcal{N})\cap \big(L_2^c(X)\bar{\otimes}L_\infty(\mathcal{M})\big),$
\begin{eqnarray} \label{e1.6dd}
  \left\| (id_{\mathcal{N}}+\hat{L})^{-s }e^{it\hat{L}} f\right\|_{BMO_{\hat{\mathcal{S}}}(\mathcal{N})} \leq C (1+|t|)^{n/2} \|f\|_{\mathcal{N}}, \ {\rm for}\ s\geq \frac{n}{2}.
\end{eqnarray}
Furthermore, if in additionally $\mathcal{S}=\{e^{-tL}\}$ admits a Markov dilation, then for any $1<p<\infty$, there exists a  constant $C=C(n,m,p)>0$ independent of $t$ such that
\begin{eqnarray} \label{e1.555dd}
 \left\| (id_{\mathcal{N}}+\hat{L})^{-s } e^{it\hat{L}}f \right\|_{L_p(\mathcal{N})} \leq C (1+|t|)^{\sigma_p} \|f\|_{L_p(\mathcal{N})}, \  {\rm for}\ s\geq \sigma_p=n\Big|{1\over  2}-{1\over  p}\Big|.
\end{eqnarray}
\end{theorem}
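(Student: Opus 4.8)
The plan is to deduce Theorem~\ref{operatorthm} from Theorem~\ref{main} applied to the semicommutative data $(\mathcal{N},\hat{L},\hat{\mathcal{S}},\mathcal{Q}_{\mathcal{N}})$, so that the whole proof reduces to verifying the algebraic, analytic, topological and operator conditions of Section~\ref{aao} in this concrete model. For the amplification \eqref{hhhh} I would take $\mathcal{N}_\rho=L_\infty(X\times X)\botimes\mathcal{M}$ with $\rho_1(g)(x,y)=g(x)$, $\rho_2(g)(x,y)=g(y)$ for $g\in\mathcal{N}$, the normal operator-valued weight $E_\rho=\big(\int_X\cdot\,d\mu(y)\big)\otimes id_{\mathcal{M}}$ (so $E_\rho(q_r)(x)=\mu(B(x,r))$), the increasing projections $q_r=\chi_{B(x,r)}(y)\otimes 1_{\mathcal{M}}$, and $\pi(F(\hat{L}))$ the functional calculus $F(L)$ acting in the $y$-variable and trivially elsewhere. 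These choices reproduce the ball-average $R_r$ tensored with $id_{\mathcal{M}}$ and the P-metric $\mathcal{Q}_{\mathcal{N}}$, with $k_{\mathcal{Q}_{\mathcal{N}}}=k_{\mathcal{Q}}<\infty$ by the computation in Section~\ref{Hilbert modules}, and $\hat{\mathcal{S}}$ is a bounded semigroup once $p_t\ge 0$. Since $\hat{L}=L\otimes id_{\mathcal{M}}$, the a priori assumption and the intertwining identity \eqref{transfer} are immediate ($\pi(F(\hat{L}))\circ\rho_2$ and $\rho_2\circ F(\hat{L})$ both act as $F(L)$ in the last variable), and the semicommutative relaxation announced in the introduction lets us take as $\mathcal{A}_{\mathcal{N}}$ a dense subalgebra of $\mathcal{N}\cap(L_2^c(X)\botimes\mathcal{M})$. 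The algebraic conditions are then formal: $b_{k,r}(x,y)$ is a scalar indicator in the $y$-variable, whence $E_\rho(b_{k,r}|\xi|^2b_{k,r})=\int_{\text{annulus}}|\xi(x,y)|^2\,d\mu(y)\le E_\rho(|\xi|^2)$, giving (ALi); and $E_\rho(q_r)(x)=\mu(B(x,r))$ depends only on $x$ while $\pi(F(L))$ acts only on $y$, giving (ALii). The topological conditions (Ti) and (Tii) are precisely what Lemma~\ref{apprx} was set up for — (Ti) from part~(1) applied to $1-q_s=\chi_{B(x,s)^c}(y)$, and (Tii) from part~(2) after noting $F_k(L)\to F(L)$ strongly on $L_2(X)$ by the spectral theorem — combined with bounded convergence in $x$ to pass to the $L_\infty^c(\mathcal{N}_\rho;E_\rho)$-topology.

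For the analytic condition (ANi) I would run a covering argument in $X$: cover $B(x,r_1)$ by balls $B(y_i,r_2)$, $i=1,\dots,N_x$, of uniformly bounded overlap with $N_x\lesssim (r_1/r_2)^n$ by \eqref{doubling}, so that $\int_{B(x,r_1)}|f(y)|^2\,d\mu(y)\lesssim\sum_i\int_{B(y_i,r_2)}|f(y)|^2\,d\mu(y)$ in $\mathcal{M}_+$; then $d(x,y_i)\le r_1$ and \eqref{dis} give $\mu(B(y_i,r_2))\lesssim (r_1/r_2)^D\mu(B(x,r_2))$, while \eqref{doubling} gives $\mu(B(x,r_2))\lesssim (r_2/r_3)^n\mu(B(x,r_3))$. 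Dividing by $\mu(B(x,r_3))$, bounding each summand by its $\mathcal{N}$-norm times $1_{\mathcal{M}}$ and summing the $N_x$ terms yields, pointwise in $x$,
\begin{align*}
\mu(B(x,r_3))^{-1}\!\int_{B(x,r_1)}\!|f(y)|^2\,d\mu(y)\ \lesssim\ \Big(\tfrac{r_1}{r_2}\Big)^{D}\Big(\tfrac{r_1}{r_3}\Big)^{n}\Big\|\mu(B(\cdot,r_2))^{-1}\!\int_{B(\cdot,r_2)}\!|f(y)|^2\,d\mu(y)\Big\|_{\mathcal{N}}\,1_{\mathcal{M}},
\end{align*}
and taking $\|\cdot\|_{\mathcal{N}}$ gives (ANi) with $\rho_2(f)$ in place of $f$; the special case $f=1$, $r_1=r_2$ recovers \eqref{ANi} and identifies the P-metric dimension with the homogeneous dimension $n$ of $X$.

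The substantive step — and the main obstacle — is the $L_2$-Gaussian estimate (Oii). Fix $x$ and set $g=fb_{k,r_2}$, supported in the $y$-variable in the $k$-th dyadic annulus $\mathrm{ann}_k(x)=\{2^{k-1}r_2\le d(x,y)<2^k r_2\}$. Then $\pi(e^{-t L})(g)q_{r_1}=(\widetilde T_{k,x}\otimes id_{\mathcal{M}})(g_x)$, where $\widetilde T_{k,x}$ is the integral operator on $L_2(X)$ with kernel $\chi_{B(x,r_1)}(y')\,p_t(y',y)\,\chi_{\mathrm{ann}_k(x)}(y)$, and by Lemma~\ref{keytran} (equivalently \eqref{remm}) it suffices to prove the scalar bound
\begin{align*}
\mu(B(x,r_1))^{-\frac12}\,\|\widetilde T_{k,x}\|_{L_2(X)\to L_2(X)}\ \le\ C\,E_k\!\Big(-c\big(\tfrac{r_2}{t^{1/m}}\big)^{\frac{m}{m-1}}\Big)\,\mu\big(B(x,\max\{r_1,t^{1/m}\})\big)^{-\frac12}.
\end{align*}
This is a Schur-test computation fed by \eqref{GE}: for $r_1\le r_2$ and $k\ge 2$ one has $d\big(B(x,r_1),\mathrm{ann}_k(x)\big)\gtrsim 2^k r_2$, so the exponential in \eqref{GE} contributes $\exp\!\big(-c(2^k r_2/t^{1/m})^{m/(m-1)}\big)=E_k\big(-c(r_2/t^{1/m})^{m/(m-1)}\big)$, while $E_1\equiv1$ reflects the absence of separation at $k=1$; crucially, the $\mu(B(y',t^{1/m}))^{-1}$ prefactor in \eqref{GE} together with \eqref{dis} and the doubling property produces the normalization $\mu(B(x,\max\{r_1,t^{1/m}\}))^{-1/2}$, the Schur integral over the small ball $B(x,r_1)$ supplying the extra factor $(\mu(B(x,r_1))/\mu(B(x,t^{1/m})))^{1/2}$ exactly in the regime $r_1<t^{1/m}$. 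The boundedness condition (Oi) is the case $t=0$: $\|\pi(F(L))\|_{L_\infty^c(\mathcal{N}_\rho;E_\rho)\to L_\infty^c(\mathcal{N}_\rho;E_\rho)}\le\|F\|_{L_\infty(\mathbb{R})}$ follows from $\|F(L)\|_{L_2(X)\to L_2(X)}\le\|F\|_{L_\infty(\mathbb{R})}$ and Lemma~\ref{keytran}.

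Once all of (ALi)--(ALii), (Ti)--(Tii), (Oi)--(Oii) and (ANi) are in place, \eqref{e1.6dd} is Theorem~\ref{main} applied to $(\mathcal{N},\hat{L},\hat{\mathcal{S}},\mathcal{Q}_{\mathcal{N}})$, the domain restriction $f\in L_\infty(\mathcal{N})\cap(L_2^c(X)\botimes L_\infty(\mathcal{M}))$ being exactly the $\mathcal{A}_{\mathcal{N}}$ used above. For the $L_p$ statement \eqref{e1.555dd} one observes that $\hat{\mathcal{S}}=\mathcal{S}\botimes id_{\mathcal{M}}$ is Markovian, since $(S_t\otimes id_{\mathcal{M}})(1\otimes 1)=S_t(1)\otimes 1=1\otimes 1$, and that tensoring the given Markov dilation of $\mathcal{S}$ with $id_{\mathcal{M}}$ yields a Markov dilation of $\hat{\mathcal{S}}$; hence the second part of Theorem~\ref{main} applies and gives \eqref{e1.555dd}. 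The only genuinely delicate point in this whole scheme is the Schur-test bookkeeping in (Oii) — matching the Gaussian decay $E_k$ and the volume normalization simultaneously — together with checking that the scalar off-diagonal estimate transfers to the $\mathcal{M}$-valued setting through Lemma~\ref{keytran}, which is why those two auxiliary lemmas were isolated beforehand.
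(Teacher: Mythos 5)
Your proposal is correct and its overall architecture is exactly that of the paper: deduce the theorem from Theorem \ref{main} by verifying the conditions of Section \ref{aao} for the same amplification data $\mathcal{N}_\rho=L_\infty(X\times X)\botimes\mathcal{M}$, $\rho_1,\rho_2$, $E_\rho=id\otimes\int\otimes id_\mathcal{M}$, $q_r=\chi_{B(x,r)}(y)\otimes 1_\mathcal{M}$, $\pi(F(\hat L))=id\otimes F(\hat L)$, with the relaxed a priori assumption, the commutativity argument for (ALi)--(ALii), the covering argument for (ANi), Lemma \ref{apprx} for (Ti)--(Tii), and the tensorized Markov dilation for the $L_p$ part. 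The one place where you genuinely diverge is the verification of (Oii). The paper applies the operator-valued Cauchy--Schwarz inequality directly to the kernel integral — essentially a Hilbert--Schmidt bound $\big(\fint_{B(x,r_1)}\int_{U_k}|p_t(y,w)|^2\big)^{1/2}$ — and then runs an explicit case analysis ($r_1\ge t^{1/m}$ versus $r_1\le t^{1/m}$, with a further dyadic decomposition of $B(x,2r_2)$ into annuli around $B(x,t^{1/m})$ when $2r_2\ge t^{1/m}$). You instead reduce, via the vector-valued extension \eqref{remm}, to a scalar $L_2\to L_2$ operator-norm bound for $\widetilde T_{k,x}$ and estimate it by a Schur test. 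This is sound and arguably cleaner: the two Schur factors automatically produce, respectively, the $L_1$-normalization of the heat kernel (or the exponentially small annular mass for $k\ge2$) and the small-ball gain $\mu(B(x,r_1))/\mu(B(x,t^{1/m}))$, so most of the paper's sub-case analysis collapses. Two caveats: first, your Schur computation is left at the level of a sketch — the separation estimate $d(B(x,r_1),U_k(B(x,r_2)))\ge 2^{k-2}r_2$ for $k\ge2$, and the absorption of the factors $(1+2^kr_2/t^{1/m})^{D}$ from \eqref{dis} and of the volume ratios $\mu(B(x,2^kr_2))/\mu(B(x,\max\{r_1,t^{1/m}\}))$ into the exponential, need to be written out as the paper does for its own version; second, Lemma \ref{keytran} is stated for an $x$-independent operator $T$, whereas $\widetilde T_{k,x}$ depends on $x$, so you should note that the transference is applied fiberwise in $x$ (the $\mathcal{N}$-norm being an essential supremum over $x$ of $\mathcal{M}$-norms), which is harmless but not literally the statement of the lemma.
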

\begin{proof}
We shall see that all the conditions imposed on Theorem \ref{main} hold for the special case $X=\mathbb{R}^n$ and $L=-\Delta$. Then making some minor modifications with higher-level viewpoints toward parts of these conditions, we could pass the result for this special model to a more general one as we formulate in Theorem \ref{operatorthm}.


To begin with, we note that the assumption $p_t(x,y)\geq0$ for any $x,y\in X$ implies that $e^{-t\hat{L}}$ is a positive map.
Next,
consider two maps $\omega_1,\omega_2$ defined by $\omega_1(f)(x,y)=f(x)$ and $\omega_2(f)(x,y)=f(y)$, for any $x,y\in X$ and any Borel measurable function $f$. Define $\rho_j=\omega_j\otimes id_{\mathcal{M}}$, $j=1,2$. Then $\rho_1,\rho_2: \mathcal{N}\rightarrow\mathcal{N}_\rho=L_\infty(X\times X)\botimes \mathcal{M}$  are injective $*$-homomorphisms. Besides, $\rho_1,\rho_2$ are maps from $L_2^c(X)\bar{\otimes}L_\infty(\mathcal{M})$ to $L_2^c(X)\bar{\otimes}L_\infty(\mathcal{N})$.

Now for any bounded Borel measurable function $F$ on $\mathbb{R}$, define amplified spectral multiplier  $\pi(F(\hat{L}))$ by $\pi(F(\hat{L})):=id_{L_\infty(X)}\otimes F(\hat{L})$,  then it is direct to see that $\pi(F(\hat{L})):L_2^c(X)\botimes L_\infty(\mathcal{N})\rightarrow L_2^c(X)\botimes L_\infty(\mathcal{N})$ satisfies $\pi(F(\hat{L}))\circ\rho_2=\rho_2\circ F(\hat{L})$ on $\mathcal{A}_\mathcal{N}=L_\infty(\mathcal{N})\cap\big( L_2^c(X)\bar{\otimes}L_\infty(\mathcal{M})\big)$. 
 Next, choose  $E_{\rho}=id_{L_\infty(X)}\otimes\int_X\otimes id_\mathcal{M}$ and $q_r$  be the projection $\chi_{B(x,r)}\otimes {\bf 1}_\mathcal{M}$.
The priori assumption trivially holds for the most classical model $L=-\Delta$  and $X=\mathbb{R}^n$ with $\mathcal{A}_\mathcal{N}$ chosen to be a smaller subalgebra $\mathcal{S}(\mathbb{R}^{n})\otimes \mathcal{S}_{\mathcal{M}}\subset L_\infty(\mathcal{N})\cap\big( L_2^c(\mathbb{R}^n)\bar{\otimes}L_\infty(\mathcal{M})\big)$. Here  $\mathcal{S}_{\mathcal{M}}$ is the linear span of all positive elements with finite support in $\mathcal{M}$ (See \cite{PX}). For the general setting formulated in our theorem, we observe that the priori assumption can be relaxed to the following more flexible condition: for any bounded Borel measurable function $F:[0,\infty)\rightarrow \mathbb{C}$, the spectral multiplier $F(\hat{L})$ is bounded on $L_2^c(X)\bar{\otimes}L_\infty(\mathcal{M})$. This condition is a direct consequence of Lemma \ref{keytran} with $f$ being chosen to be independent of $x-$variable. Under this condition, $f\mapsto\rho_2(F(\hat{L})f)$ and $f\mapsto R_{r}(F(\hat{L})f)$ are well-defined operators in $L_2^c(X)\bar{\otimes}L_\infty(\mathcal{M})$ and therefore, the proof of Theorem \ref{main} follows exactly as it was shown there.

The algebraic conditions (ALi) and (ALii) are direct consequences of commutativity. Now we verify the analytic condition (ANi), which can be stated precisely in this setting as
\begin{align}\label{iop}
\left\|\frac{1}{\mu(B(x,r_3))}\int_{B(x,r_1)}|f(y)|^2d\mu(y)\right\|_{\mathcal{N}}
\leq C\left(\frac{r_1}{r_2}\right)^{D}\left(\frac{r_1}{r_3}\right)^{n}\left\|\frac{1}{\mu(B(x,r_2))}\int_{B(x,r_2)}|f(y)|^2d\mu(y)\right\|_{\mathcal{N}}
\end{align}
for any $r_1\geq r_2\geq r_3$ and $f\in \mathcal{N}$. To verify this, we note that $B(x,r_1)$ can be covered by at most $c\big(\frac{r_1}{r_2}\big)^n$ balls with radius $r_2$. We denote these balls by $B(x_1,r_2)$, $\cdots$, $B(x_{d_{n}},r_2)$, where $d_n\lesssim \big(\frac{r_1}{r_2}\big)^n$.
By inequality \eqref{dis}, for any $1\leq \ell\leq d_n$,
\begin{align*}
\mu(B(x_{\ell},r_2))\leq C\mu(B(x,r_2))\left(1+\frac{d(x_\ell,x)}{r_2}\right)^D\leq C\mu(B(x,r_2))\left(\frac{r_1}{r_2}\right)^D.
\end{align*}
Combining these facts together yields inequality \eqref{iop}.

To verify the operator conditions, we first observe that the boundedness condition (Oi) is equivalent to
\begin{align}\label{veroi}
\left\|\left(\int_X|F(\hat{L})f_x(y)|^2d\mu(y)\right)^{1/2}\right\|_\mathcal{N}
&\leq \|F\|_{\infty}\left\|\left(\int_X|f(x,y)|^2d\mu(y)\right)^{1/2}\right\|_\mathcal{N},
\end{align}
but this is an easy consequence of spectral theorem (see for example \cite{MR617913}) together with inequality \eqref{prin}.

The boundedness condition (Oii) is equivalent to for any $t>0$, $k\geq 1$, $0<r_1\leq r_2$ and $f\in L_\infty(X)\bar{\otimes}L_\infty(\mathcal{N})$,
\begin{align} \label{veroii}
&\left\|\left(\fint_{B(x,r_1)}|e^{-t\hat{L}}(\chi_{U_k(B(x,r_2))}f_x)(y)|^2d\mu(y)\right)^{\frac{1}{2}}\right\|_{\mathcal{N}}\nonumber\\
&\leq CE_k\left(-c\left(\frac{r_2}{t^{1/m}}\right)^{\frac{m}{m-1}}\right)\left\|\left(\frac{1}{\mu(B(x,\max\{r_1,t^{1/m}\}))}\int_{U_k(B(x,r_2))}|f(x,y)|^2d\mu(y)\right)^{\frac{1}{2}}\right\|_{\mathcal{N}}
\end{align}
for some constants  $C,c>0$ and $m\geq 2$, where we denote $U_1(B):=2B$ and $U_k(B):=2^{k}B \backslash 2^{k-1}B$ for $k\geq 2$.

{\bf Case 1: }if $r_1\geq t^{1/m}$, then the estimate \eqref{veroii} in the case of $k=1$ is a direct consequence of spectral theorem (see for example \cite{MR617913}). For the case $k\geq 2$, we apply the assumption imposed on $L$ to get that
\begin{align}\label{rhso}
&\left(\fint_{B(x,r_1)}|e^{-t\hat{L}}(\chi_{U_k(B(x,r_2))}f_x)(y)|^2d\mu(y)\right)^{\frac{1}{2}}\nonumber\\
&=  \left(\fint_{B(x,r_1)}\left|\int_{U_k(B(x,r_{2}))}p_t(y,w)f(x,w)d\mu(w)\right|^2d\mu(y)\right)^{\frac{1}{2}}\nonumber\\
&\leq  \left(\fint_{B(x,r_1)}\int_{U_k(B(x,r_{2}))}|p_t(y,w)|^2d\mu(w)d\mu(y)\right)^{\frac{1}{2}}\left(\int_{U_k(B(x,r_{2}))}|f(x,w)|^2d\mu(w)\right)^{\frac{1}{2}}\nonumber\\
&\leq C \left(\fint_{B(x,r_1)}\int_{U_k(B(x,r_{2}))}\left|\frac{1}{\mu(B(y,t^{1/m}))}\exp\left(-c\left(\frac{2^kr_2}{t^{1/m}}\right)^{\frac{m}{m-1}}\right)\right|^2d\mu(w)d\mu(y)\right)^{\frac{1}{2}}\left(\int_{U_k(B(x,r_{2}))}|f(x,w)|^2d\mu(w)\right)^{\frac{1}{2}}.
\end{align}
To continue, by inequality \eqref{dis},
\begin{align}
\frac{1}{\mu(B(y,t^{1/m}))}\leq \frac{C}{\mu(B(x,t^{1/m}))}\bigg(1+\frac{d(x,y)}{t^{1/m}}\bigg)^D\leq \frac{C}{\mu(B(x,t^{1/m}))}\bigg(1+\frac{2^kr_2}{t^{1/m}}\bigg)^D.
\end{align}
Note that the non-doubling factor $\big(1+\frac{2^kr_2}{t^{1/m}}\big)^D$ can be absorbed in the exponential term of the right-hand side of \eqref{rhso}. Therefore, one has
\begin{align}\label{still}
&\left(\fint_{B(x,r_1)}|e^{-t\hat{L}}(\chi_{U_k(B(x,r_2))}f_x)(y)|^2d\mu(y)\right)^{\frac{1}{2}}\nonumber\\
&\leq C\frac{\mu(U_k(B(x,r_2)))^{1/2}}{\mu(B(x,t^{1/m}))}\exp\left(-c\left(\frac{2^kr_2}{t^{1/m}}\right)^{\frac{m}{m-1}}\right)\left(\int_{U_k(B(x,r_2))}|f(x,w)|^2d\mu(w)\right)^{\frac12}\nonumber\\
&\leq C\frac{1}{\mu(B(x,t^{1/m}))^{1/2}}\bigg(1+\frac{2^kr_2}{t^{1/m}}\bigg)^{\frac{n}{2}}\exp\left(-c\left(\frac{2^kr_2}{t^{1/m}}\right)^{\frac{m}{m-1}}\right)\left(\int_{U_k(B(x,r_2))}|f(x,w)|^2d\mu(w)\right)^{\frac12}\nonumber\\
&\leq C\frac{1}{\mu(B(x,t^{1/m}))^{1/2}}\exp\left(-c\left(\frac{2^kr_2}{t^{1/m}}\right)^{\frac{m}{m-1}}\right)\left(\int_{U_k(B(x,r_2))}|f(x,w)|^2d\mu(w)\right)^{\frac12}.
\end{align}
By the doubling condition \eqref{doubling},
\begin{align*}
\frac{1}{\mu(B(x,t^{1/m}))^{1/2}}\leq \frac{C}{\mu(B(x,r_1))^{1/2}}\bigg(1+\frac{r_1}{t^{1/m}}\bigg)^{\frac n2}\leq \frac{C}{\mu(B(x,r_1))^{1/2}}\bigg(1+\frac{2^kr_2}{t^{1/m}}\bigg)^{\frac n2}.
\end{align*}
Noting that the extra term $\big(1+\frac{2^kr_2}{t^{1/m}}\big)^{\frac n2}$ can be absorbed in the exponential in the right-hand side of \eqref{still}, we obtain \eqref{veroii} in this case.

{\bf Case 2: }if $r_1\leq t^{1/m}$, then we note that inequality \eqref{still} still holds when $k\geq 2$ with the same proof. Therefore, it remains to provide a proof for the case $k=1$. To this end, we divide our proof into two subcases.

\ \ \ \ {\bf Subcase 1:} if $2r_2\leq t^{1/m}$, then we apply the assumption imposed on $L$ to get that
\begin{align}\label{rhso22}
&\left(\fint_{B(x,r_1)}|e^{-t\hat{L}}(\chi_{B(x,2r_2)}f_x)(y)|^2d\mu(y)\right)^{\frac{1}{2}}\nonumber\\
&\leq C \left(\fint_{B(x,r_1)}\left(\int_{B(x,2r_{2})}\frac{1}{\mu(B(y,t^{1/m}))^2}d\mu(w)\right)d\mu(y)\right)^{\frac{1}{2}}\left(\int_{B(x,2r_2)}|f(x,w)|^2d\mu(w)\right)^{\frac{1}{2}}.
\end{align}
By inequality \eqref{dis} and using the condition $r_1 \leq t^{1/m}$, we see that
\begin{align}\label{biandis}
\frac{1}{\mu(B(y,t^{1/m}))}\leq \frac{C}{\mu(B(x,t^{1/m}))}\bigg(1+\frac{d(x,y)}{t^{1/m}}\bigg)^D\leq \frac{C}{\mu(B(x,t^{1/m}))}\bigg(1+\frac{r_1}{t^{1/m}}\bigg)^D\leq \frac{C}{\mu(B(x,t^{1/m}))}.
\end{align}
Then by H\"{o}lder's inequality,  the right-hand side of \eqref{rhso22} can be estimated as follows.
\begin{align*}
{\rm RHS}\ {\rm of}\ \eqref{rhso22}&\leq C\frac{\mu(B(x,2r_2))^{1/2}}{\mu(B(x,t^{1/m}))^{1/2}}\left(\frac{1}{\mu(B(x, t^{1/m}))}\int_{B(x,2r_2)}|f(x,w)|^2d\mu(w)\right)^{\frac{1}{2}}\\
&\leq C\left(\frac{1}{\mu(B(x, t^{1/m}))}\int_{B(x,2r_2)}|f(x,w)|^2d\mu(w)\right)^{\frac{1}{2}}.
\end{align*}

\ \ \ \ {\bf Subcase 2:} if $2r_2\geq t^{1/m}$, then we choose $k_0\in\mathbb{Z}_+$ such that
$$2^{k_0-1}t^{1/m}\leq 2r_2< 2^{k_0}t^{1/m}$$
and apply the assumption imposed on $L$ together with inequality \eqref{biandis} to get that
\begin{align*}
&\left(\fint_{B(x,r_1)}|e^{-t\hat{L}}(\chi_{B(x,2r_2)}f_x)(y)|^2d\mu(y)\right)^{\frac{1}{2}}\nonumber\\
&= C \left(\fint_{B(x,r_1)}\left|\int_{B(x,2r_{2})}p_t(y,w)f(x,w)d\mu(w)\right|^2d\mu(y)\right)^{\frac{1}{2}}\nonumber\\
&\leq C \left(\fint_{B(x,r_1)}\int_{B(x,2r_{2})}|p_t(y,w)|^2d\mu(w)d\mu(y)\right)^{\frac{1}{2}}\left(\int_{B(x,2r_{2})}|f(x,w)|^2d\mu(w)\right)^{\frac{1}{2}}\nonumber\\
&\leq C \left(\fint_{B(x,r_1)}\int_{B(x,2r_2)}\left|\frac{1}{\mu(B(y,t^{1/m}))}\exp\left(-c\left(\frac{d(y,w)}{t^{1/m}}\right)^{\frac{m}{m-1}}\right)\right|^2d\mu(w)d\mu(y)\right)^{\frac{1}{2}}\left(\int_{B(x,2r_{2})}|f(x,w)|^2d\mu(w)\right)^{\frac{1}{2}}\nonumber\\
&\leq C\left(\sum_{j=1}^{k_0} \fint_{B(x,r_1)}\int_{U_j(B(x,t^{1/m}))\cap B(x,2r_2)}\left|\frac{E_j(-c)}{\mu(B(x,t^{1/m}))}\right|^2d\mu(w)d\mu(y)\right)^{\frac{1}{2}}\left(\int_{B(x,2r_{2})}|f(x,w)|^2d\mu(w)\right)^{\frac{1}{2}}\\
&\leq C\left(\sum_{j=1}^{k_0} E_j(-c)\frac{\mu(B(x,2^jt^{1/m}))^{1/2}}{\mu(B(x,t^{1/m}))^{1/2}}\right)\left(\frac{1}{\mu(B(x,t^{1/m}))}\int_{B(x,2r_{2})}|f(x,w)|^2d\mu(w)\right)^{\frac 12}
\\&\leq C\left(\frac{1}{\mu(B(x,t^{1/m}))}\int_{ B(x,2r_2)}|f(x,w)|^2d\mu(w)\right)^{\frac 12}.
\end{align*}
Combining the estimates in two cases together, we deduce inequality \eqref{veroii}.

Finally, we turn to the topological conditions (Ti) and (Tii). Note that in the proof of Theorem \ref{main}, the assumption (Ti) is only used to ensure the validity of triangle inequality: for any $F\in\mathcal{S}(\mathbb{R})$ and $s>0$,
\begin{align*}
\left\|\fint_{B(x,r)}|F(\hat{L})f_x(y)|^2d\mu(y)\right\|_\mathcal{N}^{\frac{1}{2}}
\leq\sum_{k\geq 1}\left\|\fint_{B(x,r)}|F(\hat{L})(\chi_{U_k(B(x,s))}f_x)(y)|^2d\mu(y)\right\|_\mathcal{N}^{\frac{1}{2}},
\end{align*}
while the assumption (Tii) is only used to ensure the validity of \eqref{vvv1}, \eqref{djdj} and \eqref{vvv3}.
However, these inequalities can be verified easily by using the tensor separation nature of $\mathcal{N}$, identifying  $\mathcal{M}$ as a von Neumann subalgebra of $B(L_2(\mathcal{M}))$ and then applying Lemma \ref{apprx}.

This ends the proof of Theorem \ref{operatorthm}.
\end{proof}

\begin{remark}\label{rm12}
{\rm Together with the available $L_p$-interpolation theorem at hand, Theorem \ref{operatorthm} can be regarded as an extension of two previous results. Firstly, note that the positivity assumption on the heat kernel is only used to ensure that $BMO_{\mathcal{S}}$ and $BMO_{\mathcal{Q}}$ are (semi-)normed spaces and $\mathcal{Q}_\mathcal{N}$  is a P-metric in $\mathcal{N}$. However, in the case of $\mathcal{M}=\mathbb{C}$, these facts hold trivially without this assumption, so Theorem \ref{operatorthm} goes back to the classical result due to \cite{MR4150932}, which showed the sharp endpoint  $L_p$ boundedness of Schr\"{o}dinger groups under \eqref{GE}.
Secondly, in the case of $X=\mathbb{R}^n$ and $L=-\Delta$, the above theorem goes back to the $n$-dimensional analogous of  \cite[Theorem 6.4]{MR2327840}.}
\end{remark}
\begin{remark}\label{rmkey}{\rm
Inspired by the classical result in \cite{MR4150932}, one may expect that the regularity assumption can be removed from the second statement of Theorem \ref{operatorthm}. To remove this, it requires an interpolation theorem for the operator-valued BMO space associated with non-negative self-adjoint operator satisfying \eqref{GE}, which will be addressed in the forthcoming paper \cite{FHW}. At this stage we just point out that there are still a large number of concrete examples fulfilling all the assumptions imposed on Theorem \ref{operatorthm}. This can be seen easily from the fact that on commutative or semicommutative von Neumann algebras, a Markov semigroup is regular if it is diffusion (see \cite[Remark 1.4]{MR4178915}). In particular, Theorem \ref{operatorthm} holds for the following generators $L$:

(1)  Neumann-Laplacian operator $\Delta_N$ on $\mathbb{R}^n$ (see for example \cite{DDSY});

(2)  Neumann-Laplacian operator $\Delta_{N_\pm}$ on $\mathbb{R}_\pm^n$ (see for example \cite{DDSY});

(3)  Dunkl-Laplacian operator $\Delta_{{\rm Dunkl}}$ on $\mathbb{R}^N$ equipped with Dunkl measure $d\omega$ (see for example \cite{MR3989141});

(4)  Laplace-Beltrami operator $\Delta_g$ on $n$-dimensional complete Riemannian manifold $(M,g)$ with non-negative Ricci curvature (see for example \cite[Section 5]{MR1103113}).

%
%
}
\end{remark}

%

\subsection{Quantum Euclidean space}\label{QES}
In this subsection we apply our algebraic approach to study Schr\"{o}dinger groups in quantum Euclidean space.

At the beginning of this subsection, we recall some concepts and definitions in quantum Euclidean space. There are several alternative definitions of quantum Euclidean space (see \cite{GJMarxiv,MR4320770,MR4178915,MR4156216}). We adopt the one from \cite[Section 1]{MR4320770} (see also \cite[Section 4.3]{MR4178915}). To begin with, let $\Theta$ be an anti-symmetric $\mathbb{R}$-valued $n\times n$ matrix with $n\geq 1$.  $A_\Theta$ is defined as the universal $C^*$-algebra generated by a family $\{u_j(s)\}_{j=1}^n$ of strongly continuous one-parameter unitary groups in $s\in\mathbb{R}$ which satisfies the following $\Theta$-commutation relations
$$u_j(s)u_k(t)=e^{2\pi i\Theta_{jk}st}u_k(t)u_j(s).$$
In the case of $\Theta=0$, by Stone's theorem, one can take $u_j(s)=\exp(2\pi is\langle e_j,\cdot\rangle)$ and $A_\Theta$ be the space consisting of all bounded continuous functions on $\mathbb{R}^n$. In general, for any $\xi=(\xi_1,\cdots,\xi_n)\in\mathbb{R}^n$, we define the unitaries $\lambda_\Theta(\xi)=u_1(\xi_1)u_2(\xi_2)\cdots u_n(\xi_n)$. Next, $E_\Theta$ is defined as the closure in $A_\Theta$ of $\lambda_\Theta(L_1(\mathbb{R}^n))$ with
$$f=\int_{\mathbb{R}^{n}}\hat{f}_{\Theta}(\xi)\lambda_\Theta(\xi)d\xi.$$
Note that if $\Theta=0$, then $E_\Theta=\mathcal{C}_0(\mathbb{R}^n)$. Define
\begin{align*}
\tau_\Theta(f)=\tau_{\Theta}\left(\int_{\mathbb{R}^{n}}\hat{f}_{\Theta}(\xi)\lambda_\Theta(\xi)d\xi\right)=\hat{f}_{\Theta}(0),
\end{align*}
where $\hat{f}_\Theta$ is an integrable and smooth function mapping $\mathbb{R}^n$ to $\mathbb{C}$. $\tau_\Theta$ extends to a normal semifinite faithful trace on $E_\Theta$. Let $\mathcal{R}_\Theta$ be the von Neumann algebra generated by $E_\Theta$ (that is, the double commutant of $E_\Theta$) in the GNS representation of $\tau_\Theta$. $\mathcal{R}_\Theta$ is called a quantum Euclidean space associated with $\Theta$. Note that if $\Theta=0$, then $\mathcal{R}_\Theta=L_\infty(\mathbb{R}^n)$.  Consider a normal injective $*$-homomorphism $$\sigma_\Theta:\mathcal{R}_\Theta\ni \lambda_\Theta(\xi)\mapsto \exp_\xi\otimes \lambda_\Theta(\xi)\in L_\infty(\mathbb{R}^n)\botimes \mathcal{R}_\Theta,$$
where $\exp_\xi$ stands for the Fourier character $\exp(2\pi i\langle \xi,\cdot\rangle)$. Now we set
$\mathcal{S}_\Theta=\big\{f\in\mathcal{R}_\Theta:\check{f}_{\Theta}\in\mathcal{S}(\mathbb{R}^n)\big\}$
be the quantum Schwartz class, which is a weak-$*$ dense subalgebra of $\mathcal{R}_\Theta$ and dense in $L_p(\mathcal{M})$ for all $p>0$ (\cite[Proposition 1.13]{MR4320770}).

Our first goal is to modify the argument in \cite{MR4178915} to construct a natural P-metric for quantum Euclidean spaces. To begin with, we recall the heat semigroup $\{H_t\}_{t\geq 0}=\{e^{t\Delta}\}_{t\geq 0}$ on $\mathbb{R}^n$ acting on $f:\mathbb{R}^n\rightarrow \mathbb{C}$, which can be expressed as follows
\begin{align}\label{ht}
H_tf(x)=\int_{\mathbb{R}^n}\hat{f}(\xi)e^{-t|\xi|^2}\exp_{\xi}(x)d\xi.
\end{align}
This induces a regular semigroup $\mathcal{S}=\{S_{\Theta,t}\}_{t\geq 0}$ on $\mathcal{R}_\Theta$ determined by
\begin{align}\label{stheta}
\sigma_{\Theta}\circ S_{\Theta,t}=(H_t\otimes id_{\mathcal{R}_{\Theta}})\circ\sigma_\Theta.
\end{align}
$S_{\Theta,t}$ gives a Markov semigroup on $\mathcal{R}_{\Theta}$ which formally acts as
\begin{align*}
S_{\Theta,t}(f)=\int_{\mathbb{R}^{n}}\hat{f}_{\Theta}(\xi)e^{-t|\xi|^2}\lambda_{\Theta}(\xi)d\xi.
\end{align*}

Recall that $S_{\Theta,t}$ admits an infinitesimal negative generator
$$\Delta_{\Theta}f=\lim_{t\rightarrow 0}\frac{S_{\Theta,t}(f)-f}{t}=-4\pi^2\int_{\mathbb{R}^n}\hat{f}_{\Theta}(\xi)|\xi|^2\lambda_{\Theta}(\xi)d\xi.$$
Alternatively, one may define $\Delta_{\Theta}$ via second-order  partial $\Theta$-derivatives. To illustrate this, for $1\leq j\leq n$, define the $j$-th first-order  partial $\Theta$-derivative $\partial_{\Theta,j}$ as the linear extension of the map
$$\partial_{\Theta,j}(\lambda_\Theta(\xi))=2\pi i\xi_j\lambda_\Theta(\xi)$$
over the quantum Schwartz class $\mathcal{S}_\Theta$. Then it is easy to see that $\Delta_\Theta=\sum_{j=1}^n\partial_{\Theta,j}^2$.

Recall from Section \ref{Operator-valued setting} that the semicommutative extension $H_t\otimes id_{\mathcal{R}_{\Theta}}$ satisfies the following semigroup majorization
\begin{align}\label{majorization}
(H_{t}\otimes id_{\mathcal{R}_{\Theta}})(|\xi|^2)\leq \sum_{j\geq 0}2^{jn}\exp(-c2^{2j})(R_{j,\sqrt{t}}\otimes id_{\mathcal{R}_{\Theta}})(|\xi|^2),\ {\rm for}\ {\rm any}\ \xi\in L_\infty(\mathbb{R}^n)\botimes\mathcal{R}_\Theta,
\end{align}
and the metric integrability condition as well as the average domination condition. Then one can produce a P-metric on $\mathcal{R}_{\Theta}$. To this end, let $B(r):=B(0,r)$ and consider the projections $q_{r}=\chi_{B(r)}\otimes {\bf 1}_{\mathcal{R}_{\Theta}}$. Define the completely positive unital map
\begin{align*}
&R_{\Theta,j,r}(f)=\frac{1}{|B(2^jr)|}\int_{B(2^jr)}\sigma_{\Theta}(f)(x)dx=\frac{1}{|B(2^jr)|}\int_{\mathbb{R}^n}\hat{\chi}_{B(2^jr)}(\xi)\hat{f}_{\Theta}(\xi)\lambda_{\Theta}(\xi)d\xi.
\end{align*}
Then $R_{\Theta,r}:=R_{\Theta,0,r}$ is of the form \eqref{hhhh} with $\rho_1 ={\bf 1}\otimes \cdot$, $\rho_2=\sigma_\Theta$, $E_\rho=\int\otimes id_{\mathcal{R}_\Theta}$ and with  $q_r$ being the projection $\chi_{B(r)}\otimes {\bf 1}_{\mathcal{R}_\Theta}$. One could easily check that
\begin{align}\label{tran}
\sigma_{\Theta}\circ R_{\Theta,j,r}=(R_{j,r}\otimes id_{\mathcal{R}_{\Theta}})\circ \sigma_{\Theta}
\end{align}
and  $R_{\Theta,j,r}f=f$ for any $f$ in the null space of $BMO_{\mathcal{S}}^c(\mathcal{R}_{\Theta})$-seminorm. Note that by substituting $\xi$ with $\sigma_\Theta(f)$ in \eqref{majorization}  for some $f\in\mathcal{R}_\Theta$  and then using the intertwining identities \eqref{stheta} and \eqref{tran}, we conclude that the semigroup majorization
\begin{align}\label{sdd}
S_{\Theta,t}(|\xi|^2)\leq C\sum_{j\geq 0}2^{jn}\exp(-c2^{2j})R_{\Theta,j,\sqrt{t}}(|\xi|^2)
\end{align}
holds for all $\xi\in\mathcal{R}_{\Theta}$.

Now we claim that for any $x_0\in\mathbb{R}^n$ and $r>0$, one has
\begin{align}\label{aver}
\left\|\fint_{B(x_0,r)}|\sigma_{\Theta}(f)(x)|^2dx\right\|_{\mathcal{R}_{\Theta}}\leq \left\|\fint_{B(0,r)}|\sigma_{\Theta}(f)(x)|^2dx\right\|_{\mathcal{R}_{\Theta}}.
\end{align}
Indeed, since $\sigma_\Theta$ is an injective $*$-homomorphism: $\mathcal{R}_\Theta\rightarrow L_\infty(\mathbb{R}^n)\botimes\mathcal{R}_{\Theta}$, one has
\begin{align*}
{\rm LHS}&=\left\|\sigma_{\Theta}\left(\fint_{B(0,r)}|\sigma_{\Theta}(f)(x)|^2dx\right)(x_0)\right\|_{\mathcal{R}_{\Theta}}\\
&\leq\left\|\sigma_{\Theta}\left(\fint_{B(0,r)}|\sigma_{\Theta}(f)(x)|^2dx\right)\right\|_{L_\infty(\mathbb{R}^n)\botimes\mathcal{R}_{\Theta}}\leq\left\|\fint_{B(0,r)}|\sigma_{\Theta}(f)(x)|^2dx\right\|_{\mathcal{R}_{\Theta}}.
\end{align*}
Next, note that $B(2^j\sqrt{t})$ can be covered by at most $2^{jn}$ balls with radius $\sqrt{t}$ and with finite overlap. For simplicity, we denote these balls by $B_{j,1}(\sqrt{t}),\cdots,B_{j,d_{j,n}}(\sqrt{t})$, where $d_{j,n}\lesssim 2^{jn}$. Such a cover is not unique, but we choose a kind of cover such that $B_{j,\ell}(y,r)=B_{j,\ell}(r)+y$. Then for any $j\geq 0$ and $t>0$, the following average  domination inequality holds:
\begin{align*}
\left\|\fint_{B(0,2^j\sqrt{t})}|\sigma_\Theta(f)(y)|^2dy\right\|_{\mathcal{R}_\Theta}
&\leq 2^{-jn}\sum_{\ell=1}^{d_{j,n}}\left\|\fint_{B_{j,\ell}(\sqrt{t})}|\sigma_\Theta(f)(y)|^2dy\right\|_{\mathcal{R}_\Theta}\\
&\leq C\left\|\fint_{B(\sqrt{t})}|\sigma_\Theta(f)(y)|^2dy\right\|_{\mathcal{R}_\Theta}.
\end{align*}

Therefore,
$$\mathcal{Q}_\Theta=\left\{\left(R_{\Theta,j,t},2^{jn/2}\exp(-c2^{2j})\cdot {\bf 1}_{\mathcal{R}_\Theta}\right):j\in\mathbb{N},t>0\right\}$$
is a P-metric for $\mathcal{R}_\Theta$.

Theorem \ref{main} applied to quantum Euclidean space can be formulated as follows.
\begin{theorem}\label{qusch}
For any anti-symmetric $\mathbb{R}$-valued $n\times n$ matrix $\Theta$, there exists a  constant $C=C(n)>0$ independent of $t$ and $\Theta$ such that
\begin{eqnarray*}
  \left\| (id_{\mathcal{R}_\Theta}-\Delta_\Theta)^{-s }e^{-it\Delta_\Theta} f\right\|_{BMO_{\mathcal{S}}(\mathcal{R}_{\Theta})} \leq C (1+|t|)^{n/2} \|f\|_{\mathcal{R}_\Theta}, \ {\rm for}\ s\geq \frac{n}{2}.
\end{eqnarray*}
Furthermore, for any $1<p<\infty$, there exists a  constant $C=C(n,p)>0$ independent of $t$ and $\Theta$ such that
\begin{eqnarray*}
 \left\| (id_{\mathcal{R}_\Theta}-\Delta_\Theta)^{-s }e^{-it\Delta_\Theta}  f\right\|_{L_p(\mathcal{R}_\Theta)} \leq C (1+|t|)^{\sigma_p} \|f\|_{L_p(\mathcal{R}_\Theta)},\ {\rm for}
  \ s\geq \sigma_p=n\Big|{1\over  2}-{1\over  p}\Big|.
\end{eqnarray*}
\end{theorem}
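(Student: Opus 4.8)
The plan is to derive Theorem~\ref{qusch} from Theorem~\ref{main} applied to the noncommutative measure space $(\mathcal{R}_\Theta,\tau_\Theta)$ carrying the Markov semigroup $\mathcal{S}=\{S_{\Theta,t}\}_{t\ge 0}$, the P-metric $\mathcal{Q}_\Theta$ constructed above, and $\mathcal{A}_\mathcal{M}=\mathcal{S}_\Theta$. The ambient data of \eqref{hhhh} are $\mathcal{N}_\rho=L_\infty(\mathbb{R}^n)\botimes\mathcal{R}_\Theta$, $\rho_1={\bf 1}\otimes\,\cdot\,$, $\rho_2=\sigma_\Theta$, $E_\rho=\int_{\mathbb{R}^n}\otimes\,id_{\mathcal{R}_\Theta}$ and $q_r=\chi_{B(0,r)}\otimes{\bf 1}_{\mathcal{R}_\Theta}$, so that $E_\rho(q_r)=|B(0,r)|\,{\bf 1}_{\mathcal{R}_\Theta}$ is a scalar and the P-metric dimension equals $n$. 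Writing $L=-\Delta_\Theta$, I take the amplification to be $\pi(F(L)):=F(-\Delta)\otimes id_{\mathcal{R}_\Theta}$, where $F(-\Delta)$ is the classical spectral multiplier on $L_2(\mathbb{R}^n)$; extending \eqref{stheta} from $F(x)=e^{-tx}$ to an arbitrary bounded Borel function $F$ by the spectral theorem yields $\sigma_\Theta\circ F(L)=(F(-\Delta)\otimes id_{\mathcal{R}_\Theta})\circ\sigma_\Theta$, which is exactly the intertwining \eqref{transfer}. This $\pi$ is a homomorphism into $\mathcal{L}(V_\rho)$ with $V_\rho=L_\infty^c(\mathcal{N}_\rho;E_\rho)+\sigma_\Theta(\mathcal{S}_\Theta)=\big(L_2^c(\mathbb{R}^n)\botimes\mathcal{R}_\Theta\big)+\sigma_\Theta(\mathcal{S}_\Theta)$: invariance of the first summand is Lemma~\ref{keytran} with $T=F(-\Delta)$ and $\mathcal{N}=\mathcal{R}_\Theta$, and invariance of the second follows from the intertwining together with the a priori assumption that $F(L)$ maps $\mathcal{S}_\Theta$ into $\mathcal{S}_\Theta$ --- true because the Fourier symbol of $F(L)$ is a smooth function of $\xi$ with polynomially bounded derivatives, hence a multiplier of the Schwartz class; recall also that $\mathcal{S}_\Theta$ is weak-$*$ dense in $\mathcal{R}_\Theta$ and dense in every $L_p(\mathcal{R}_\Theta)$ by \cite[Proposition~1.13]{MR4320770}.

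The structural conditions then split cleanly. Since the base algebra $L_\infty(\mathbb{R}^n)$ is commutative, $b_{k,r}$ is a central projection of $\mathcal{N}_\rho$, so the $\mathcal{Q}$-monotonicity (ALi) reduces to the pointwise inequality $\int_{\text{annulus}}|\xi(x)|^2\,dx\le\int_{\mathbb{R}^n}|\xi(x)|^2\,dx$ for positive integrands; because $E_\rho(q_r)$ is a scalar multiple of ${\bf 1}_{\mathcal{R}_\Theta}$ the right modularity (ALii) is automatic, and the boundedness condition (Oi) is once more Lemma~\ref{keytran}. The strong doubling condition (ANi) is a Euclidean covering estimate with dimension $n$ and $D=0$: one covers $B(0,r_1)$ by $\lesssim(r_1/r_2)^n$ translated balls $B(x_\ell,r_2)$ and uses \eqref{aver} to dominate each $\big\|\,|B(0,r_2)|^{-1}\int_{B(x_\ell,r_2)}|\sigma_\Theta(f)(x)|^2\,dx\,\big\|_{\mathcal{R}_\Theta}$ by $\big\|\,|B(0,r_2)|^{-1}\int_{B(0,r_2)}|\sigma_\Theta(f)(x)|^2\,dx\,\big\|_{\mathcal{R}_\Theta}$. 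The topological conditions (Ti) and (Tii) are verified exactly as in the proof of Theorem~\ref{operatorthm}, using the tensor structure of $\mathcal{N}_\rho$ together with Lemma~\ref{apprx}.

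The crux is the $L_2$-Gaussian estimate (Oii). Here $\pi(e^{-tL})=H_t\otimes id_{\mathcal{R}_\Theta}$ is convolution with the Euclidean Gaussian kernel $p_t$, which satisfies the Gaussian upper bound \eqref{GE} with $m=2$ and is non-negative, so the required inequality is precisely the semicommutative bound \eqref{veroii} for $X=\mathbb{R}^n$, $L=-\Delta$, $\mathcal{M}=\mathcal{R}_\Theta$, specialized to balls centred at the origin. The argument is the one already carried out in the proof of Theorem~\ref{operatorthm}: an operator Cauchy--Schwarz inequality gives $\big|\int_{U_k(B(0,r_2))}p_t(x,y)f(y)\,dy\big|^2\le\big(\int_{U_k}|p_t(x,y)|^2\,dy\big)\int_{U_k}|f(y)|^2\,dy$, and then the pointwise Gaussian bound on $\sup_{x\in B(0,r_1),\,y\in U_k}|p_t(x,y)|$ produces the decay factor $E_k\big(-c(r_2/\sqrt t)^{2}\big)$, the polynomial volume factors being absorbed into the exponential because $D=0$. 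Every constant entering (ALi)--(Tii), (Oi)--(Oii) and (ANi) arises from the Euclidean heat kernel and Euclidean geometry through the $\Theta$-independent maps $\sigma_\Theta$ and $\pi$, hence is independent of $\Theta$; the first statement of Theorem~\ref{main} therefore gives the $BMO_{\mathcal{S}}(\mathcal{R}_\Theta)$ bound with $C=C(n)$.

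For the $L_p$ statement it remains to record that $\mathcal{S}=\{S_{\Theta,t}\}$ is Markovian ($S_{\Theta,t}({\bf 1})={\bf 1}$, completely positive, trace preserving and self-adjoint, all transferred from $\{H_t\}$ through $\sigma_\Theta$) and admits a Markov dilation --- standard for a radial Fourier-multiplier semigroup whose symbol $|\xi|^2$ is conditionally negative (cf.\ \cite{MR2885593,MR4178915}); the second statement of Theorem~\ref{main} then yields the $L_p(\mathcal{R}_\Theta)$ bound for $s\ge\sigma_p=n|1/2-1/p|$, with $\Theta$-independent constant. I expect (Oii) to be the main technical obstacle --- one must handle the operator-valued Cauchy--Schwarz inequality and the $\mathcal{R}_\Theta$-norms of the resulting operator-valued integrals with care --- together with pinning down the Markov-dilation input needed for the $L_p$ half of the statement.
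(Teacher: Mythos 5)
Your proposal is correct and follows essentially the same route as the paper: you take $\mathcal{N}_\rho=L_\infty(\mathbb{R}^n)\botimes\mathcal{R}_\Theta$, $\rho_2=\sigma_\Theta$, $q_r=\chi_{B(r)}\otimes{\bf 1}$, $\pi(F(-\Delta_\Theta))=F(-\Delta\otimes id_{\mathcal{R}_\Theta})$, deduce the algebraic conditions from commutativity, (ANi) from the Euclidean covering argument combined with \eqref{aver}, (Oi)--(Oii) as the special cases of \eqref{veroi}--\eqref{veroii} with $X=\mathbb{R}^n$, $L=-\Delta$, $\mathcal{M}=\mathcal{R}_\Theta$, the topological conditions from Lemma \ref{apprx} using $\sigma_\Theta(\mathcal{S}_\Theta)\subset L_2^c(\mathbb{R}^n)\botimes\mathcal{R}_\Theta$, and the $L_p$ half from the regularity/Markov-dilation of $S_{\Theta,t}$ as in \cite{MR4178915}. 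No gaps; this matches the paper's proof of Theorem \ref{qusch}.
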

\begin{proof}
Since $\sigma_\Theta$ is an injective $*$-homomorphism: $\mathcal{R}_\Theta\rightarrow L_\infty(\mathbb{R}^n)\botimes \mathcal{R}_\Theta$, the theorem may be shown by $n$-dimensional analogous in \cite[Theorem 6.4]{MR2327840} together with a transference technique. But here we will provide a different proof via our algebraic approach.

We will verify the algebraic, analytic, topological and operator conditions imposed on Theorem \ref{main}, then the assertion follows directly.  See also \cite[Remark 1.4]{MR4178915} for the comment about the regularity assumption on the semigroup. To begin with, we choose $\mathcal{M}=\mathcal{R}_\Theta$, $\mathcal{N}_\rho= L_\infty(\mathbb{R}^n)\botimes \mathcal{R}_\Theta$ in Theorem \ref{main} and then consider two $*$-homomorphisms $\rho_1,\rho_2: \mathcal{R}_\Theta\rightarrow \mathcal{N}_\rho= L_\infty(\mathbb{R}^n)\botimes \mathcal{R}_\Theta$ defined by $\rho_1(f)={\bf 1}\otimes f$ and $\rho_2(f)=\sigma_\Theta(f)$, respectively. Besides, for any function $F\in C_b^\infty$, define amplified spectral multiplier $\pi(F(-\Delta_\Theta))$ by $\pi(F(-\Delta_\Theta)):=F(-\Delta\otimes id_{\mathcal{R}_\Theta})$, then it is direct to see that $\pi(F(-\Delta_\Theta)):L_2^c(\mathbb{R}^n)\botimes\mathcal{R}_\Theta\rightarrow L_2^c(\mathbb{R}^n)\botimes\mathcal{R}_\Theta$ satisfying $$\pi(F(-\Delta_\Theta))\circ\rho_2=\rho_2\circ F(-\Delta_\Theta)\ {\rm on}\ \mathcal{S}_\Theta.$$
 Next, choose $E_{\rho}=\int\otimes id_{\mathcal{R}_\Theta}$ and  $q_r$ be the projections $\chi_{B(r)}\otimes id_{\mathcal{R}_\Theta}$, then the algebraic conditions (ALi)-(ALii) follows from commutativity directly.

Now we verify the analytic condition (ANi) by showing that
\begin{align*}
\left\|\frac{1}{|B(r_3)|}\int_{B(r_1)}|\sigma_\Theta(f)(y)|^2dy\right\|_{\mathcal{R}_\Theta}
\leq C\left(\frac{r_1}{r_3}\right)^{n}\left\|\frac{1}{|B(r_2)|}\int_{B(r_2)}|\sigma_\Theta(f)(y)|^2dy\right\|_{\mathcal{R}_\Theta}
\end{align*}
for any $r_1\geq r_2\geq r_3$ and $f\in \mathcal{R}_\Theta$. To verify this, we recall that $B(r_1)$ can be covered by at most $c\big(\frac{r_1}{r_2}\big)^n$ balls. We denote these balls by $B(x_1,r_2)$, $\cdots$, $B(x_{d_{n}},r_2)$, where $d_n\lesssim \big(\frac{r_1}{r_2}\big)^n$. Then by \eqref{aver},
\begin{align}\label{yinyin}
\left\|\frac{1}{|B(r_3)|}\int_{B(r_1)}|\sigma_\Theta(f)(y)|^2dy\right\|_{\mathcal{R}_\Theta}
&\leq C \sum_{\ell=1}^{d_n}\left(\frac{r_2}{r_3}\right)^n\left\|\frac{1}{|B(x_\ell,r_2)|}\int_{B(x_\ell,r_2)}|\sigma_\Theta(f)(y)|^2dy\right\|_{\mathcal{R}_\Theta}\nonumber\\
&\leq C\left(\frac{r_1}{r_3}\right)^{n}\left\|\frac{1}{|B(r_2)|}\int_{B(r_2)}|\sigma_\Theta(f)(y)|^2dy\right\|_{\mathcal{R}_\Theta}.
\end{align}

Observe that the conditions (Oi), (Oii) in this setting are special cases of \eqref{veroi} and \eqref{veroii}, respectively, in which we choose  $\mathcal{M}=\mathcal{R}_\Theta$, $X=\mathbb{R}^n$ and $L=-\Delta$. Note that $\sigma_\Theta(\mathcal{S}_\Theta)\subset L_2^c(\mathbb{R}^n)\botimes \mathcal{R}_\Theta$ (\cite[Proposition B.3]{MR4320770}), so the topological conditions (Ti) and (Tii) are direct consequences of Lemma \ref{apprx} with $T$, $\mathcal{M}$ and $X$ chosen to be $F(-\Delta)$, $\mathcal{R}_\Theta$ and $\mathbb{R}^n$, respectively. Recalling that $S_{\Theta,t}$ is a regular Markov semigroup (\cite{MR4178915}), we end the proof of Theorem \ref{qusch}.
\end{proof}
\subsection{Matrix algebra}
In this subsection we apply our algebraic approach to investigate Schr\"{o}dinger groups in matrix algebra $B(\ell_2)$.

Given $A=\sum_{m,k}a_{m,k}e_{m,k}\in B(\ell_2)$, define a Markov semigroup of convolution type on $B(\ell_2)$ by
$$S_t(A)=\sum_{m,k}e^{-t|m-k|^2}a_{m,k}e_{m,k}.$$
Recall that $S_{t}$ admits an infinitesimal non-negative generator
$$L_{B(\ell_2)}A=-\lim_{t\rightarrow 0}\frac{S_{t}(A)-A}{t}=\sum_{m,k}|m-k|^2a_{m,k}e_{m,k}.$$
Consider an injective $*$-homomorphism $u: B(\ell_2)\rightarrow L_\infty(\mathbb{R})\botimes B(\ell_2)$ defined by
$$u(A)(y)=\sum_{m,k}a_{m,k}e^{2\pi i(m-k)y}e_{m,k}.$$
Let $\{H_t\}_{t\geq 0}=\{e^{t\Delta}\}_{t\geq 0}$ be the heat semigroup on $\mathbb{R}$ defined in \eqref{ht}. Then it is direct to see that $\mathcal{S}=(S_t)_{t\geq 0}$ is the regular semigroup associated to $\{H_t\}_{t\geq 0}$ by transference
\begin{align}\label{inter0}
u\circ S_t=(H_t\otimes id_{B(\ell_2)})\circ u.
\end{align}
For any $j\geq 0$ and $t>0$, define a completely positive unital map $\mathcal{R}_{j,t}$ on $B(\ell_2)$ by
$$\mathcal{R}_{j,t}(A):=\fint_{B(2^jt)}u(A)(y)dy.$$
Then $\mathcal{R}_{t}:=\mathcal{R}_{0,t}$ is of the form \eqref{hhhh} with $\rho_1 ={\bf 1}\otimes \cdot$, $\rho_2=u$, $E_\rho=\int\otimes id_{B(\ell_2)}$ and with  $q_r$ being the projection $\chi_{B(r)}\otimes {\bf 1}_{B(\ell_2)}$.
One could easily check that
\begin{align}\label{interw}
&u\circ \mathcal{R}_{j,t}=(R_{j,t}\otimes id_{B(\ell_2)})\circ u.
\end{align}
With this equality, one can follow the approach as estimating \eqref{sdd} and \eqref{aver} to conclude the semigroup majorization
\begin{align*}
S_{t}(|\xi|^2)\leq \sum_{j\geq 0}2^{j}\exp(-c2^{2j})\mathcal{R}_{j,\sqrt{t}}(|\xi|^2)
\end{align*}
for all $\xi\in B(\ell_2)$ and the average domination condition: for any $x_0\in\mathbb{R}$ and $r>0$, one has
\begin{align*}
\left\|\fint_{B(x_0,r)}|u(A)(x)|^2dx\right\|_{B(\ell_2)}\leq \left\|\fint_{B(r)}|u(A)(x)|^2dx\right\|_{B(\ell_2)}.
\end{align*}
Therefore,
$$\mathcal{Q}_{B(\ell_2)}=\left\{\left(\mathcal{R}_{j,t},2^{j/2}\exp(-c2^{2j})\otimes {\bf 1}_{B(\ell_2)}\right):j\in\mathbb{N},t>0\right\}$$
is a P-metric for $B(\ell_2)$.

Denote by $S_p$ the Schatten-$p$ class on the Hilbert space $\ell_2$, i.e. $S_p=L_p(B(\ell_2))$. Then Theorem \ref{main} applied to matrix algebra can be formulated as follows.
\begin{theorem}\label{bl2}
There exists a  constant $C>0$ independent of $t$ such that
\begin{eqnarray}
  \left\| (id_{B(\ell_2)}+L_{B(\ell_2)})^{-s}e^{itL_{B(\ell_2)}} A\right\|_{BMO_{\mathcal{S}}^c(B(\ell_2))} \leq C (1+|t|)^{1/2} \|A\|_{B(\ell_2)}, \ {\rm for}\ s\geq\frac{1}{2}.
\end{eqnarray}
Furthermore, for any $1<p<\infty$, there exists a  constant $C_p>0$ independent of $t$ such that
\begin{eqnarray}
 \left\| (id_{B(\ell_2)}+L_{B(\ell_2)})^{-s }e^{itL_{B(\ell_2)}}  A\right\|_{S_p} \leq C (1+|t|)^{|1/2-1/p|} \|A\|_{S_p}, \ {\rm for}
  \ s\geq \Big|{1\over  2}-{1\over  p}\Big|.
\end{eqnarray}
\end{theorem}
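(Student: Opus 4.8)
This proof is an application of the main theorem (Theorem \ref{main}) to the matrix algebra $B(\ell_2)$. The approach is simply to verify that all the hypotheses of Theorem \ref{main}—the algebraic conditions (ALi), (ALii), the topological conditions (Ti), (Tii), the operator conditions (Oi), (Oii), and the analytic condition (ANi)—hold in this setting, using the P-metric $\mathcal{Q}_{B(\ell_2)}$ already constructed above via the transference map $u:B(\ell_2)\to L_\infty(\mathbb{R})\bar\otimes B(\ell_2)$. Since the $*$-homomorphism $u$ intertwines the semigroup $(S_t)$ with the classical heat semigroup on $\mathbb{R}$ (equality \eqref{inter0}) and intertwines the averaging operators $\mathcal{R}_{j,t}$ with the classical ones (equality \eqref{interw}), essentially all the conditions reduce to the corresponding statements for $X=\mathbb{R}$ and $L=-\Delta$, which are precisely the special cases of the operator-valued conditions \eqref{veroi} and \eqref{veroii} that were verified in the proof of Theorem \ref{operatorthm}.

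The plan is as follows. First I would set $\mathcal{M}=B(\ell_2)$, $\mathcal{N}_\rho=L_\infty(\mathbb{R})\bar\otimes B(\ell_2)$, $\rho_1(A)={\bf 1}\otimes A$, $\rho_2(A)=u(A)$, $E_\rho=\int\otimes id_{B(\ell_2)}$, and $q_r=\chi_{B(r)}\otimes {\bf 1}_{B(\ell_2)}$, and take $\pi(F(L_{B(\ell_2)})):=id_{L_\infty(\mathbb{R})}\otimes F(-\Delta\otimes id_{B(\ell_2)})$; one checks the intertwining equality \eqref{transfer} holds on the weak-$*$ dense subalgebra $\mathcal{A}_{B(\ell_2)}$ of finitely supported matrices (for which $u(A)\in L_2^c(\mathbb{R})\bar\otimes B(\ell_2)$, so everything is well-defined). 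The algebraic conditions (ALi) and (ALii) follow at once from commutativity of $L_\infty(\mathbb{R})$, exactly as in Theorem \ref{operatorthm}. The topological conditions (Ti), (Tii) follow from Lemma \ref{apprx} with $T=F(-\Delta)$, $X=\mathbb{R}$, and the ambient algebra $B(\ell_2)$, together with the tensor-separation structure of $\mathcal{N}_\rho$. The boundedness condition (Oi) is the special case of \eqref{veroi}, and the $L_2$-Gaussian estimate (Oii) is the special case of \eqref{veroii}, both for the Euclidean heat kernel on $\mathbb{R}$ (whose Gaussian bound \eqref{GE} holds with $n=1$, $m=2$). Finally the strong doubling condition (ANi) is the one-dimensional instance of the covering argument proving \eqref{iop}: $B(r_1)\subset\mathbb{R}$ is covered by $\lesssim r_1/r_2$ intervals of length $r_2$, and since the Lebesgue measure is translation invariant the non-doubling exponent is $D=0$ and $n=1$.

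Once all conditions are verified, Theorem \ref{main} directly gives the $L_\infty\to BMO_{\mathcal{S}}$ estimate for $(id_{B(\ell_2)}+L_{B(\ell_2)})^{-s}e^{itL_{B(\ell_2)}}$ with the bound $C(1+|t|)^{1/2}$ for $s\geq 1/2$, since the P-metric dimension here is $n_{\bf q}=1$. For the $L_p$ (i.e. Schatten-$p$) bound one additionally needs that $\mathcal{S}=(S_t)$ is a Markov semigroup admitting a Markov dilation: the Markov property (unitality and trace preservation) is immediate from the definition $S_t(\sum a_{m,k}e_{m,k})=\sum e^{-t|m-k|^2}a_{m,k}e_{m,k}$, and the existence of a Markov dilation follows because $(S_t)$ is obtained by transference from the classical heat semigroup on $\mathbb{R}$, which is a diffusion semigroup; one invokes the fact recalled in Remark \ref{rmkey} (see \cite[Remark 1.4]{MR4178915}) that regular Markov semigroups on (semi)commutative von Neumann algebras admit Markov dilations, and transference preserves this. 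Then the second part of Theorem \ref{main} yields the Schatten-$p$ estimate with bound $C(1+|t|)^{|1/2-1/p|}$ for $s\geq |1/2-1/p|$.

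The main obstacle, to the extent there is one, is purely bookkeeping: making precise that the transference machinery of Theorem \ref{operatorthm} applies verbatim here, i.e. that the role of $\mathcal{M}$ in the operator-valued setting is played by $B(\ell_2)$ (finite-dimensional tail) while the "base" $X=\mathbb{R}$ carries the metric, and checking that $u$ indeed lands in $L_2^c(\mathbb{R})\bar\otimes B(\ell_2)$ on the dense subalgebra $\mathcal{A}_{B(\ell_2)}$ so that the a priori spectral-multiplier assumption (in the relaxed form used in Theorem \ref{operatorthm}) is satisfied. No genuinely new estimate is required beyond what was already established.
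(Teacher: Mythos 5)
Your overall strategy is exactly the paper's: verify the algebraic, analytic, topological and operator conditions for $\mathcal{M}=B(\ell_2)$, $\mathcal{N}_\rho=L_\infty(\mathbb{R})\bar\otimes B(\ell_2)$, $\rho_2=u$, $E_\rho=\int\otimes id_{B(\ell_2)}$, $q_r=\chi_{B(r)}\otimes{\bf 1}$, reduce (Oi) and (Oii) to the one-dimensional instances of \eqref{veroi} and \eqref{veroii}, and then invoke Theorem \ref{main} together with regularity of $\mathcal{S}$ for the interpolation step. However, there is one concrete error that leaves a gap in your treatment of the topological conditions. You assert that $u(A)\in L_2^c(\mathbb{R})\bar\otimes B(\ell_2)$ for $A\in\mathcal{A}_{B(\ell_2)}$, "so everything is well-defined," and on that basis you dispatch (Ti) and (Tii) by citing Lemma \ref{apprx}. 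This is false: $u(e_{m,k})(y)=e^{2\pi i(m-k)y}e_{m,k}$ has constant modulus in $y$, so $\int_{\mathbb{R}}|u(A)(y)|^2\,dy$ diverges for every nonzero finitely supported $A$. The paper explicitly records that $u(\mathcal{A}_{B(\ell_2)})\nsubseteq L_2^c(\mathbb{R})\bar\otimes B(\ell_2)$, and precisely for this reason Lemma \ref{apprx} alone does not cover the elements you need it for.

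The missing piece is the direct verification, for the almost-periodic generators $e^{i(m-k)\cdot}\otimes e_{m,k}$, of the two limits that (Ti) and (Tii) actually require: namely that
\begin{align*}
\int_{B(0,r)}\Big|F(-\Delta)\big(e^{i(m-k)\cdot}\chi_{B(0,s)^c}(\cdot)\big)(y)\Big|^2\,dy\rightarrow 0\ \ (s\to\infty),
\qquad
r\,\big|(F_j-F)(|m-k|^2)\big|^2\rightarrow 0\ \ (j\to\infty),
\end{align*}
tested against $b\in S_2$. Both follow from dominated convergence and the explicit action of $F(-\Delta)$ on the characters $e^{i(m-k)\cdot}$ (this is what the paper does in \eqref{linmm} and \eqref{linmmmm}), but they are not consequences of Lemma \ref{apprx} applied to $u(A)$, since that lemma presupposes membership in $L_2^c(\mathbb{R})\bar\otimes\mathcal{N}$. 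The same issue affects your remark that the a priori spectral-multiplier assumption holds because "$u$ lands in $L_2^c$"; it must instead be handled through the relaxed form of the assumption and the explicit formulas above. Once this supplementary computation is added, the rest of your argument goes through as in the paper.
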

\begin{proof}
The proof of this theorem is similar to that of Theorem \ref{qusch}. We will modify the proof by providing an algebraic skeleton for $B(\ell_2)$.  To begin with, we choose $\mathcal{M}=B(\ell_2)$, $\mathcal{N}_\rho= L_\infty(\mathbb{R})\botimes B(\ell_2)$ in Theorem \ref{main} and then consider two $*$-homomorphisms $\rho_1,\rho_2: B(\ell_2)\rightarrow \mathcal{N}_\rho= L_\infty(\mathbb{R})\botimes B(\ell_2)$ defined by $\rho_1(A)={\bf 1}_{B(\ell_2)}\otimes A$ and $\rho_2(A)=u(A)$, respectively. Besides, for any function $F\in C_b^\infty$, define amplified spectral multiplier $\pi(F(L_{B(\ell_2)}))$ by $\pi(F(L_{B(\ell_2)})):=F(-\Delta\otimes id_{B(\ell_2)})$, then it is direct to see that $\pi(F(L_{B(\ell_2)})):V_\rho\rightarrow V_\rho$ satisfying $$\pi(F(L_{B(\ell_2)}))\circ\rho_2=\rho_2\circ F(L_{B(\ell_2)})\ {\rm on}\ \mathcal{A}_{{B(\ell_2)}},$$
where $$\mathcal{A}_{{B(\ell_2)}}=\left\{x\in B(\ell_2):\ x=\sum_{m\in I,k\in J}a_{m,k}e_{m,k}, \ \text {for  some finite index sets}\ I,\ J \right\} .$$ Next, choose $E_{\rho}=\int\otimes id_{B(\ell_2)}$ and  $q_r$ be the projection $\chi_{B(r)}\otimes id_{B(\ell_2)}$, then the algebraic conditions (ALi)-(ALii) follow from commutativity directly and one can follow a similar procedure as verifying \eqref{yinyin} to verify the condition (ANi). Observe that the conditions (Oi), (Oii) in this setting are special cases of \eqref{veroi} and \eqref{veroii}, respectively, in which we choose  $\mathcal{M}=B(\ell_2)$, $X=\mathbb{R}$ and $L=-\Delta$.

Note that $u(\mathcal{A}_{B(\ell_2)})\nsubseteq L_2^c(\mathbb{R}^n)\botimes B(\ell_2)$. Therefore, to verify the topological conditions (Ti) and (Tii), by Lemma \ref{apprx} with $T$, $\mathcal{M}$, $X$ and $x$ chosen to be $F(-\Delta)$, $B(\ell_2)$, $\mathbb{R}$ and $0$, respectively, it remains to verify that for any $A=\sum_{m,k}a_{m,k}e_{m,k}\in\mathcal{A}_{B(\ell_2)}$, any $b\in S_2$, and any Schwartz functions $F_j$, $F$ satisfying $F_j\rightarrow F$ pointwise with $\sup\limits_{j}\|F_j\|_\infty<+\infty$,
\begin{align}\label{linmm}
\lim_{s\rightarrow \infty}{\rm Tr}\left(b^*\int_{B(0,r)}\Big|(F(-\Delta)\otimes id_{B(\ell_2)})\Big(\sum_{m,k}a_{m,k}e^{i(m-k)\cdot}\chi_{B(0,s)^c}(\cdot)e_{m,k}\Big)(y)\Big|^2dyb\right)=0
\end{align}
and
\begin{align}\label{linmmmm}
\lim_{j\rightarrow \infty}{\rm Tr}\left(b^*\int_{B(0,r)}\Big|((F_j(-\Delta)-F(-\Delta))\otimes id_{B(\ell_2)})\Big(\sum_{m,k}a_{m,k}e^{i(m-k)\cdot}e_{m,k}\Big)(y)\Big|^2dyb\right)=0.
\end{align}
To show \eqref{linmm}, we apply domination convergence theorem to see that
\begin{align*}
&{\rm Tr}\left(b^*\int_{B(0,r)}\Big|(F(-\Delta)\otimes id_{B(\ell_2)})\Big(e^{i(m-k)\cdot}\chi_{B(0,s)^c}(\cdot)e_{m,k}\Big)(y)\Big|^2dyb\right)\\
&=\int_{B(0,r)}\Big|F(-\Delta)\Big(e^{i(m-k)\cdot}\chi_{B(0,s)^c}(\cdot)\Big)(y)\Big|^2{\rm Tr}(b^*e_{k,k}b)dy\\
&\leq\int_{B(0,r)}\left(\int_{B(0,s)^c}|\mathcal{F}^{-1}(F(|\cdot|^2))(y-z)|dz\right)^2dy\|b\|_{S_2}^2\rightarrow 0,\ {\rm as}\ s\rightarrow \infty,
\end{align*}
where we used the notation $\mathcal{F}^{-1}$ to denote Fourier inverse transform of a suitable function. To show \eqref{linmmmm}, a direct calculation yields
\begin{align*}
&{\rm Tr}\left(b^*\int_{B(0,r)}\Big|((F_j(-\Delta)-F(-\Delta))\otimes id_{B(\ell_2)})\Big(e^{i(m-k)\cdot}e_{m,k}\Big)(y)\Big|^2dyb\right)\\
&=r|(F_j-F)(|m-k|^2)|^2\tau(b^*e_{k,k}b)\rightarrow 0,\ {\rm as}\ j\rightarrow \infty.
\end{align*}
Therefore, all the conditions are verified. To obtain the $L_p$ boundedness assertion, it suffices to note that the semigroup $\mathcal{S}_t$ is regular (see \cite{MR4178915}). This finishes the proof of  Theorem \ref{bl2}.
\end{proof}
\subsection{Group von Neumann algebra}
In this subsection we apply our algebraic approach to investigate Schr\"{o}dinger groups in group von Neumann algebra equipped with finite-dimensional cocycles.

At the beginning of this subsection, we collect some basic concepts and properties about Fourier multipliers on group von Neumann algebra from \cite{MR3283931}. Let $G$ be a discrete group and $\lambda:G\rightarrow B(\ell_2(G))$ be its left regular representation defined by $\lambda(g)\delta_h=\delta_{gh}$, where each $\delta_g$ takes value 1 at $g$ and zero elsewhere such that $(\delta_g)_{g\in G}$ forms a canonical basis of $\ell_2(G)$. Write $\mathcal{L}(G)$ for its group von Neumann algebra, which is defined to be the closure of $\lambda(C_c(G))$ in the weak operator topology of $B(\ell_2(G))$. Let $\tau_G$ be a standard normalized trace  on $\mathcal{L}(G)$ defined by
$$\tau_G(f)=\langle \delta_e,f\delta_e \rangle$$
for  $f\in\mathcal{L}(G)$, where $e$ denotes the identity of $G$. Note that any $f\in\mathcal{L}(G)$ admits a Fourier series expansion
$$\sum_{g\in G}\hat{f}(g)\lambda(g),\ {\rm with}\ \hat{f}(g)=\tau_G(f\lambda(g^{-1}))\ {\rm such}\ {\rm that}\ \tau_G(f)=\hat{f}(e).$$
Denote $L_p(\hat{G})=L_p(\mathcal{L}(G),\tau_G)$ be the $L_p$ space over the noncommutative measure space $(\mathcal{L}(G),\tau_G)$, which is equipped with the norm
\begin{align*}
\|f\|_{L_p(\hat{G})}=\bigg\|\sum_{g\in G} \hat{f}(g)\lambda(g)\bigg\|_p=\bigg(\tau_G\bigg[\Big|\sum_{g\in G} \hat{f}(g)\lambda(g)\Big|^p\bigg]\bigg)^{\frac{1}{p}}.
\end{align*}
For any $m:G\rightarrow \mathbb{R}$, a Fourier multiplier on $\mathcal{L}(G)$ is defined by
$$T_m:\sum_{g\in G}\hat{f}(g)\lambda(g)\mapsto \sum_{g\in G}m(g)\hat{f}(g)\lambda(g).$$
Note that in the case of $G=\mathbb{Z}^n$, this corresponds to the Fourier mutipliers on the $n$-torus.

Let $(\mathcal{H},a,b)$ be an affine representation of $G$, which is an orthogonal representation $a: G\rightarrow O(\mathcal{H})$ over a real Hilbert space $\mathcal{H}$ together with a mapping $b:G\rightarrow \mathcal{H}$ satisfying the cocycle law $$b(gh)=a_g (b(h))+b(g).$$
Given an affine representation $(\mathcal{H},a,b)$, the function $\psi(g)=\langle b(g),b(g)\rangle_\mathcal{H}$ is called a length if it satisfies the following three properties:

(1) Vanishing property: $\psi(e)=0$;

(2) Symmetry: $\psi(g)=\psi(g^{-1})$, for any $g\in G$;

(3) Conditionally negative property: $\sum_{g,h\in G}\bar{\beta}_g\beta_h \psi(g^{-1}h)\leq 0$ if $\sum_{g\in G}\beta_g=0$.

Conversely, it follows from Schoenberg's theorem \cite{MR1503439} that any length $\psi$ determines a precise affine representation $(\mathcal{H}_\psi,a_\psi,b_\psi)$ such that $\psi(g)=\langle b_\psi(g),b_\psi(g)\rangle_{\mathcal{H}_\psi}.$ Throughout this subsection, we assume that dim$\mathcal{H}_\psi=n$. Note that under this assumption, one can find a linear isometric $T:\mathcal{H}_\psi\rightarrow \mathbb{R}^n$, an orthogonal representation $\tilde{a}: G\rightarrow O(\mathbb{R}^n)$ defined by $\tilde{a}_g(x):=T\alpha_g(T^{-1}x)$ and a map $\tilde{b}:G\rightarrow \mathbb{R}^n$ defined by $\tilde{b}(g)=T(b(g))$, which satisfies the cocycle law $\tilde{b}(gh)=\tilde{a}_g (\tilde{b}(h))+\tilde{b}(g)$. Therefore, $(\mathbb{R}^n,\tilde{a},\tilde{b})$ is also an affine representation of $G$ and it satisfies $\psi(g)=\langle \tilde{b}(g),\tilde{b}(g)\rangle_{\mathbb{R}^n}$, for any $g\in G$. In what follows, we will abuse the notation $(\mathbb{R}^n,a,b)$ to denote an affine representation of $G$ for simplicity.

Let $\psi:G\rightarrow \mathbb{R}_+$ be any length on some discrete group $G$, then the infinitesimal generator of the semigroup $S_{\psi,t}:\lambda(g)\mapsto \exp(-t\psi(g))\lambda(g)$ is the map determined by $L_\psi(\lambda(g))=\psi(g)\lambda(g)$. Write $\mathcal{S}_\psi=\{S_{\psi,t}\}_{t\geq 0}$. Note that any $\psi$-radial Fourier multipliers $T_{m\circ \psi}$ fall in the category of spectral operators of the form $m(L_\psi)$.

Define an action $\alpha:G\rightarrow {\rm Aut}(L_\infty(\mathbb{R}^n))$ by $\alpha_g(e^{2\pi i\langle \xi,\cdot \rangle})=e^{2\pi i \langle a_g(\xi),\cdot\rangle}$, which is clearly trace preserving. Then consider two $*$-representations
$$\rho:L_\infty(\mathbb{R}^n)\ni f\mapsto \sum_{h\in G}\alpha_{h^{-1}}(f)\otimes e_{h,h}\in L_\infty(\mathbb{R}^n)\botimes B(\ell_2(G)),$$
$$\Lambda:G\ni g\mapsto\sum_{h\in G}{\bf 1}\otimes e_{gh,h}\in L_\infty(\mathbb{R}^n)\botimes B(\ell_2(G)),$$
where $e_{g,h}$ is the matrix unit for $B(\ell_2(G))$.  Now we recall from \cite[Section 2.1]{MR3283931} (see also \cite{MR1943006}) the concept of crossed product algebra $L_\infty(\mathbb{R}^n)\rtimes_\alpha G$, which is defined as the weak operator closure in $L_\infty(\mathbb{R}^n)\botimes B(\ell_2(G))$ of the $*$-algebra generated by $\rho(L_\infty(\mathbb{R}^n))$ and $\Lambda(G)$.  The crossed product algebra $L_\infty(\mathbb{R}^n)\rtimes_\alpha G$ can be embedded into $L_\infty(\mathbb{R}^n)\botimes B(\ell_2(G))$ via the map $\rho\rtimes \Lambda$. To illustrate this, we formally write a generic element of $L_\infty(\mathbb{R}^n)\rtimes_\alpha G$ as $\sum_{g\in G}f_g\rtimes_\alpha \lambda(g)$, with $f_g\in L_\infty(\mathbb{R}^n)$, and then observe that
\begin{align*}
(\rho\rtimes \Lambda)\left(\sum_{g\in G}f_g\rtimes_\alpha \lambda(g)\right)
&=\sum_{g\in G}\rho(f_g)\Lambda(g)\\
&=\sum_{g\in G}\left(\sum_{h,\gamma\in G}(\alpha_{h^{-1}}(f_g)\otimes e_{h,h})({\bf 1}\otimes e_{g\gamma,\gamma})\right)\\
&=\sum_{g\in G}\left(\sum_{h\in G}\alpha_{h^{-1}}(f_g)\otimes e_{h,g^{-1}h}\right)\\
&=\sum_{g\in G}\left(\sum_{h\in G}\alpha_{(gh)^{-1}}(f_g)\otimes e_{gh,h}\right).
\end{align*}

 Consider a normal $*$-homomorphism given by $$\pi_1:\mathcal{L}(G)\ni \lambda(g)\mapsto e^{2\pi i \langle b(g),\cdot \rangle}\rtimes_\alpha\lambda(g)\in L_\infty(\mathbb{R}^n)\rtimes_\alpha G.$$ Then it is direct that $\eta_2:=(\rho\rtimes \Lambda)\circ \pi_1$ is a normal injective $*$-homomorphism: $\mathcal{L}(G)\rightarrow L_\infty(\mathbb{R}^n)\botimes B(\ell_2(G))$, which can be expressed as
$$\eta_2(\lambda(g))=\sum_{h\in G}e^{2\pi i\langle  a_{(gh)^{-1}}b(g),\cdot\rangle}\otimes e_{gh,h}.$$
For any $j\geq 0$ and $t>0$, we define a completely positive unital map  $\mathscr{R}_{j,t}$ on $\mathcal{L}(G)$ by
$$\mathscr{R}_{j,t}(f):=\fint_{B(2^jt)}\eta_2(f)(y)dy,\ f\in\mathcal{L}(G).$$
Since $a_g\in O(\mathbb{R}^n)$, it can be verified that for any $\lambda(g)\in\mathcal{L}(G)$,
$$\mathscr{R}_{j,t}(\lambda(g))=C_{g,2^jt}\sum_{h\in G}{\bf 1}\otimes e_{gh,h}\in \eta_1(\mathcal{L}(G))\simeq \mathcal{L}(G),$$
where $C_{g,r}:=\fint_{B(r)}e^{2\pi i \langle b(g),y\rangle}dy$ and $\eta_1$ is an injective $*$-homomorphism defined by
$$\eta_1:\mathcal{L}(G)\ni\lambda(g)\mapsto \sum_{h\in G}{\bf 1}\otimes e_{gh,h}\in B(\ell_2(G)).$$
Then $\mathscr{R}_{t}:=\mathscr{R}_{0,t}$ is of the form \eqref{hhhh} with $\rho_j=\eta_j$, $j=1,2$, $E_\rho=\int\otimes id_{B(\ell_2(G))}$ and with  $q_r$ being the projection $\chi_{B(r)}\otimes {\bf 1}_{B(\ell_2(G))}$.
To avoid confusion, we distinguish $\mathcal{L}(G)$ with $\eta_1(\mathcal{L}(G))$ by  considering
$$\tilde{\mathscr{R}}_{j,t}(\lambda(g)):=C_{g,2^jt}\lambda(g),\ {\rm for}\ {\rm any}\ \lambda(g)\in\mathcal{L}(G).$$

Now we establish two useful intertwining inequalities.
\begin{lemma}\label{ide}
For any $j\geq0$ and $t>0$, one has the following equality.
\begin{align*}
&\eta_2\circ \tilde{\mathscr{R}}_{j,t}=(R_{j,t}\otimes id_{B(\ell_2(G))})\circ \eta_2.
\end{align*}

\end{lemma}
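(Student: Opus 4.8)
The plan is to prove the identity $\eta_2\circ\tilde{\mathscr{R}}_{j,t}=(R_{j,t}\otimes id_{B(\ell_2(G))})\circ\eta_2$ by evaluating both sides on a generic generator $\lambda(g)\in\mathcal{L}(G)$, using linearity and weak-$*$ continuity to extend to all of $\mathcal{L}(G)$. First I would recall the explicit formulas already established in the excerpt: we have $\tilde{\mathscr{R}}_{j,t}(\lambda(g))=C_{g,2^jt}\lambda(g)$ with $C_{g,r}=\fint_{B(r)}e^{2\pi i\langle b(g),y\rangle}\,dy$, and $\eta_2(\lambda(g))=\sum_{h\in G}e^{2\pi i\langle a_{(gh)^{-1}}b(g),\cdot\rangle}\otimes e_{gh,h}$. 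On the left-hand side, applying $\eta_2$ to $C_{g,2^jt}\lambda(g)$ simply pulls out the scalar $C_{g,2^jt}$, giving $C_{g,2^jt}\sum_{h\in G}e^{2\pi i\langle a_{(gh)^{-1}}b(g),\cdot\rangle}\otimes e_{gh,h}$.

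On the right-hand side, $(R_{j,t}\otimes id_{B(\ell_2(G))})$ acts on each summand $e^{2\pi i\langle a_{(gh)^{-1}}b(g),\cdot\rangle}\otimes e_{gh,h}$ by averaging the Fourier character over $B(2^jt)$ in the $L_\infty(\mathbb{R}^n)$-factor: explicitly, $R_{j,t}(e^{2\pi i\langle\xi,\cdot\rangle})=\fint_{B(2^jt)}e^{2\pi i\langle\xi,y\rangle}\,dy$. So the $h$-th summand becomes $\Big(\fint_{B(2^jt)}e^{2\pi i\langle a_{(gh)^{-1}}b(g),y\rangle}\,dy\Big)\otimes e_{gh,h}$. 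The key observation is then that since $a_{(gh)^{-1}}\in O(\mathbb{R}^n)$ is an orthogonal transformation and $B(2^jt)$ is the ball centered at the origin (hence $O(n)$-invariant under the substitution $y\mapsto a_{(gh)^{-1}}^{-1}y$, which preserves both the inner product and Lebesgue measure), we get $\fint_{B(2^jt)}e^{2\pi i\langle a_{(gh)^{-1}}b(g),y\rangle}\,dy=\fint_{B(2^jt)}e^{2\pi i\langle b(g),y\rangle}\,dy=C_{g,2^jt}$, independent of $h$. Hence the right-hand side equals $C_{g,2^jt}\sum_{h\in G}e^{2\pi i\langle a_{(gh)^{-1}}b(g),\cdot\rangle}\otimes e_{gh,h}$, matching the left-hand side.

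The only mild technical point — which I would dispatch briefly rather than dwell on — is the passage from generators to general elements of $\mathcal{L}(G)$: both $\tilde{\mathscr{R}}_{j,t}$ and $\eta_2$ (and $R_{j,t}\otimes id$) are normal (weak-$*$ continuous) bounded maps, and the identity holds on the weak-$*$ dense $*$-subalgebra spanned by the $\lambda(g)$, so it extends by density and continuity. I do not anticipate a genuine obstacle here; the substance of the lemma is entirely the orthogonal-invariance computation $\int_{B(r)}e^{2\pi i\langle a_g(\xi),y\rangle}\,dy=\int_{B(r)}e^{2\pi i\langle\xi,y\rangle}\,dy$ for $a_g\in O(\mathbb{R}^n)$, which is exactly why one builds the $R_{j,t}$'s from Euclidean balls centered at the origin rather than arbitrary balls. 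The analogous second intertwining inequality promised in the same sentence of the excerpt (involving the semigroup $S_{\psi,t}$ and the heat semigroup) will follow the same template, using that the Gaussian weight $e^{-t|\xi|^2}$ is likewise $O(n)$-invariant, so the two identities are proved in parallel.
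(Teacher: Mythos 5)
Your proposal is correct and follows essentially the same route as the paper: evaluate both sides on a generator $\lambda(g)$, translate the ball $B(x,2^jt)$ to the origin so that each character $e^{2\pi i\langle a_{(gh)^{-1}}b(g),\cdot\rangle}$ is an eigenfunction of $R_{j,t}$, and then use the $O(\mathbb{R}^n)$-invariance of $B(2^jt)$ and of Lebesgue measure to see that the eigenvalue $\fint_{B(2^jt)}e^{2\pi i\langle a_{(gh)^{-1}}b(g),y\rangle}\,dy$ equals $C_{g,2^jt}$ independently of $h$. The only cosmetic slip is the intermediate line $R_{j,t}(e^{2\pi i\langle\xi,\cdot\rangle})=\fint_{B(2^jt)}e^{2\pi i\langle\xi,y\rangle}\,dy$, which omits the character factor $e^{2\pi i\langle\xi,x\rangle}$ that your final displayed formula correctly restores.
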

\begin{proof}
%
To begin with, observe that for any $\lambda(g)\in\mathcal{L}(G)$,
\begin{align*}
(R_{j,t}\otimes id_{B(\ell_2(G))})\circ \eta_2(\lambda(g))(x)
&=\fint_{B(x,2^jt)}\sum_{h\in G}e^{2\pi i\langle  a_{(gh)^{-1}}b(g),y\rangle}\otimes e_{gh,h}dy\\
&=\sum_{h\in G}\fint_{B(2^jt)}e^{2\pi i\langle  a_{(gh)^{-1}}b(g),y\rangle}dye^{2\pi i\langle  a_{(gh)^{-1}}b(g),x\rangle}\otimes e_{gh,h}\\
&=\fint_{B(2^jt)}e^{2\pi i\langle  b(g),y\rangle}dy\eta_2(\lambda(g))(x)\\
&=\eta_2\circ \tilde{\mathscr{R}}_{j,t}(\lambda(g)).
\end{align*}
This completes the proof of Lemma \ref{ide}.
\end{proof}
Denote by $\mathcal{A}_G$  the algebra of trigonometric polynomials in $\mathcal{L}(G)$. That is,
$$\mathcal{A}_G:=\left\{\sum_{g\in A}c_g\lambda(g):\ c_g\in\mathbb{C},A\subset G\ {\rm finite}\right\}.$$
\begin{lemma}\label{verfff}
For any bounded Borel function $F$, one has the equality:
\begin{align*}
\eta_2\circ F(L_\psi)=(F(-\Delta\otimes id_{B(\ell_2(G))}))\circ\eta_2\ \ \ \ {\rm on}\ \mathcal{A}_G.
\end{align*}
\end{lemma}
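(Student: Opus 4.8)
The plan is to reduce the asserted identity to the generators $\lambda(g)$, $g\in G$, and then to read off both sides from the fact that the Fourier characters occurring in the range of $\eta_2$ are generalized eigenfunctions of $-\Delta$ whose eigenvalues are exactly the values of the length $\psi$. First I would use that $\eta_2$, $F(L_\psi)$ and $F(-\Delta\otimes id_{B(\ell_2(G))})$ are all linear and that $\mathcal{A}_G$ is the linear span of $\{\lambda(g):g\in G\}$, so it suffices to prove the identity on a single $\lambda(g)$. By definition of the generator one has $F(L_\psi)\lambda(g)=F(\psi(g))\lambda(g)$, so the left-hand side is
\[
\eta_2\big(F(L_\psi)\lambda(g)\big)=F(\psi(g))\,\eta_2(\lambda(g))=F(\psi(g))\sum_{h\in G}e^{2\pi i\langle a_{(gh)^{-1}}b(g),\,\cdot\,\rangle}\otimes e_{gh,h}.
\]

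Next I would compute the right-hand side slab by slab. Each summand of $\eta_2(\lambda(g))$ has the form $\exp_{\xi_{g,h}}\otimes e_{gh,h}$ with $\xi_{g,h}:=a_{(gh)^{-1}}b(g)\in\mathbb{R}^n$, and $-\Delta\otimes id_{B(\ell_2(G))}$ acts only in the $\mathbb{R}^n$-variable. Since the character $\exp_\xi(x)=e^{2\pi i\langle\xi,x\rangle}$ satisfies $H_t\exp_\xi=e^{-t|\xi|^2}\exp_\xi$ (immediate from \eqref{ht}), i.e.\ $\exp_\xi$ is an eigenfunction of $-\Delta$ with eigenvalue $|\xi|^2$, the functional calculus yields $\big(F(-\Delta)\otimes id_{B(\ell_2(G))}\big)(\exp_{\xi_{g,h}}\otimes e_{gh,h})=F(|\xi_{g,h}|^2)\,\exp_{\xi_{g,h}}\otimes e_{gh,h}$. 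As $a_{(gh)^{-1}}\in O(\mathbb{R}^n)$ is orthogonal, $|\xi_{g,h}|^2=|a_{(gh)^{-1}}b(g)|^2=|b(g)|^2=\langle b(g),b(g)\rangle=\psi(g)$ for every $h$; summing over $h$ then gives $F(\psi(g))\,\eta_2(\lambda(g))$, which coincides with the left-hand side above.

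The step I expect to require the most care is the second one: the characters $\exp_\xi$ are not square-integrable, so one must say precisely what $F(-\Delta\otimes id_{B(\ell_2(G))})$ means on $\eta_2(\mathcal{A}_G)$. I would handle this exactly as for the amplified multipliers in the quantum Euclidean and matrix-algebra examples (Theorems \ref{qusch} and \ref{bl2}): the amplified multiplier $\pi(F(L_\psi))$ is taken to be $F(-\Delta\otimes id_{B(\ell_2(G))})$, and on the algebraic span of the slabs $\exp_\xi\otimes e_{g,h}$ (which contains $\eta_2(\mathcal{A}_G)$) this operator is defined through the tempered-distribution functional calculus of $-\Delta$, using $\widehat{\exp_\xi}=\delta_\xi$ so that the bounded Borel symbol $F(|\cdot|^2)$ multiplies $\delta_\xi$ by $F(|\xi|^2)$; one then checks multiplicativity and that this agrees with $F(-\Delta)\otimes id$ on the $L_2^c$-component $L_\infty^c(\mathcal{N}_\rho;E_\rho)$. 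Lemma \ref{verfff} then records precisely the compatibility of this prescription with the functional calculus of $L_\psi$ on $\mathcal{L}(G)$. If one prefers to avoid distributions, an alternative is to prove the identity first for the heat-semigroup symbol $F(\lambda)=e^{-t\lambda}$, where it reduces to the bounded transference relation $\eta_2\circ S_{\psi,t}=(H_t\otimes id_{B(\ell_2(G))})\circ\eta_2$ on $\mathcal{A}_G$ (the same one-line slab computation, with everything bounded), and then to pass to general $F$ by Fourier inversion in $t$ as in Lemma \ref{PLPL}, with the required strong-operator continuity supplied by Lemma \ref{apprx}.
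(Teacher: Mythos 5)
Your proposal is correct and follows essentially the same route as the paper: reduce to a single generator $\lambda(g)$ by linearity, use $F(L_\psi)\lambda(g)=F(\psi(g))\lambda(g)$, and compute $F(-\Delta)\otimes id$ slab by slab via the eigenfunction relation $F(-\Delta)\exp_\xi=F(|\xi|^2)\exp_\xi$ together with $|a_{(gh)^{-1}}b(g)|^2=\psi(g)$ from orthogonality of the cocycle action. The paper's proof is exactly this computation; your extra care in the last paragraph about interpreting the functional calculus on the non-square-integrable characters is a refinement the paper omits, not a difference in method.
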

\begin{proof}
Note that for any $\lambda(g)\in\mathcal{L}(G)$,
\begin{align*}
(F(-\Delta\otimes id_{B(\ell_2(G))}))\circ\eta_2(\lambda(g))
&=(F(-\Delta)\otimes id_{B(\ell_2(G))})\left(\sum_{h\in G}e^{2\pi i\langle  a_{(gh)^{-1}}b(g),\cdot\rangle}\otimes e_{gh,h}\right)\\
&=\left(\sum_{h\in G}F(|a_{(gh)^{-1}}b(g)|^2)e^{2\pi i\langle  a_{(gh)^{-1}}b(g),\cdot\rangle}\otimes e_{gh,h}\right)\\
&=\left(\sum_{h\in G}F(\psi(g))e^{2\pi i\langle  a_{(gh)^{-1}}b(g),\cdot\rangle}\otimes e_{gh,h}\right)\\
&=\eta_2\circ F(L_\psi)(\lambda(g)).
\end{align*}
This ends the proof of Lemma \ref{verfff}.
\end{proof}
Observe that the semicommutative extension $H_t\otimes id_{B(\ell_2(G))}$ satisfies ${\rm for}\ {\rm any}\ \xi\in L_\infty(\mathbb{R}^n)\botimes B(\ell_2(G))$,
\begin{align*}
(H_{t}\otimes id_{B(\ell_2(G))})(|\xi|^2)\leq C\sum_{j\geq 0}2^{jn}\exp(-c2^{2j})(R_{j,\sqrt{t}}\otimes id_{B(\ell_2(G))})(|\xi|^2).
\end{align*}
This, in combination with Lemmas \ref{ide} and \ref{verfff}, implies that for any $\xi\in \mathcal{L}(G)$,
\begin{align*}
S_{\psi,t}(|\xi|^2)\leq C\sum_{j\geq 0}2^{jn}\exp(-c2^{2j})\tilde{\mathscr{R}}_{j,\sqrt{t}}(|\xi|^2).
\end{align*}
Recalling that $B(2^j\sqrt{t})$ is covered by the balls $B_{j,1}(\sqrt{t})$, ..., $B_{j,d_{j,n}}(\sqrt{t})$, we deduce that
\begin{align}\label{deccc}
\mathscr{R}_{j,\sqrt{t}}(|\xi|^2)\leq C2^{-jn}\sum_{\ell=1}^{d_{j,n}}\fint_{B_{j,\ell}(\sqrt{t})}\eta_2(|\xi|^2)(y)dy
\end{align}
for all $\xi\in \mathcal{L}(G)$, where $d_{j,n}\lesssim 2^{jn}$.  Now we verify the
average domination condition.
\begin{lemma}\label{avercon}
For any $x_0\in\mathbb{R}^n$, $r>0$ and $f\in \mathcal{A}_{G}$, one has
\begin{align*}
\left\|\int_{B(x_0,r)}|\eta_2(f)(x)|^2dx\right\|_{B(\ell_2(G))}\leq \left\|\int_{B(r)}|\eta_2(f)(x)|^2dx\right\|_{B(\ell_2(G))}.
\end{align*}
\end{lemma}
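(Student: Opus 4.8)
The strategy follows the template of the proofs of \eqref{aver} and its $B(\ell_2)$-analogue, but there is a genuinely new point: because $\eta_2$ incorporates the affine action $\alpha$, a translation of the $\mathbb{R}^n$-coordinate does \emph{not} induce a $*$-automorphism of $\mathcal{L}(G)$, so one cannot argue verbatim as there. The plan is instead to absorb the translation into conjugation by an explicit diagonal unitary; this will in fact yield equality of the two norms. The starting point is the cocycle identity $a_{(gh)^{-1}}b(g)=b(h^{-1})-b((gh)^{-1})$ for $g,h\in G$, a short consequence of the cocycle law $b(gh)=a_g(b(h))+b(g)$ together with $a_{g^{-1}}b(g)=-b(g^{-1})$ (read off from $b(e)=0$): apply the cocycle law to the pair $(h^{-1},g^{-1})$ to get $b((gh)^{-1})=a_{h^{-1}}b(g^{-1})+b(h^{-1})$, and substitute $a_{g^{-1}}b(g)=-b(g^{-1})$ into $a_{(gh)^{-1}}b(g)=a_{h^{-1}}a_{g^{-1}}b(g)$.

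Next I would introduce the unitary $V=\sum_{h\in G}e^{2\pi i\langle b(h^{-1}),\cdot\rangle}\otimes e_{h,h}\in L_\infty(\mathbb{R}^n)\botimes B(\ell_2(G))$, whose slice at $x\in\mathbb{R}^n$ is the diagonal unitary $V(x)=\sum_{h\in G}e^{2\pi i\langle b(h^{-1}),x\rangle}e_{h,h}$ on $\ell_2(G)$. Using the cocycle identity above and the matrix-unit bookkeeping $e_{k,k}e_{gh,h}e_{\ell,\ell}=\delta_{k,gh}\delta_{\ell,h}e_{gh,h}$, a direct computation on the generators gives
\begin{align*}
V(x)^*\,\eta_1(\lambda(g))\,V(x)=\sum_{h\in G}e^{2\pi i\langle b(h^{-1})-b((gh)^{-1}),\,x\rangle}e_{gh,h}=\sum_{h\in G}e^{2\pi i\langle a_{(gh)^{-1}}b(g),\,x\rangle}e_{gh,h}=\eta_2(\lambda(g))(x),
\end{align*}
and hence, by linearity over $\mathcal{A}_G$, $\eta_2(f)(x)=V(x)^*\,\eta_1(f)\,V(x)$ for every $f\in\mathcal{A}_G$, with $\eta_1(f)$ independent of $x$. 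In other words, the affine twist becomes \emph{inner} once the coordinate is frozen; this is the heart of the argument.

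Since the $V(x)$ are diagonal they pairwise commute, so $V(x+x_0)=V(x_0)V(x)=V(x)V(x_0)$, and therefore for $f\in\mathcal{A}_G$
\begin{align*}
|\eta_2(f)(x+x_0)|^2=\big(V(x_0)V(x)\big)^{*}\,|\eta_1(f)|^{2}\,\big(V(x_0)V(x)\big)=V(x_0)^{*}\,|\eta_2(f)(x)|^{2}\,V(x_0),
\end{align*}
where the last equality uses only that $V(x_0)$ commutes with $V(x)$ and $V(x)^{*}$ (it need not commute with $|\eta_1(f)|^2$) and that $V(x)$ is unitary. Performing the change of variables $x\mapsto x+x_0$ and pulling the $x$-independent unitaries through the integral yields
\begin{align*}
\int_{B(x_0,r)}|\eta_2(f)(x)|^2\,dx=V(x_0)^{*}\Big(\int_{B(r)}|\eta_2(f)(x)|^2\,dx\Big)V(x_0),
\end{align*}
so both sides have equal $B(\ell_2(G))$-norm, which is stronger than the claimed inequality.

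The only step that is not purely formal is the identity $\eta_2(f)(x)=V(x)^*\eta_1(f)V(x)$: one must check carefully that conjugation by $V(x)$ reproduces exactly the telescoping phases $e^{2\pi i\langle b(h^{-1}),x\rangle}e^{-2\pi i\langle b((gh)^{-1}),x\rangle}$ that the cocycle identity equates with $e^{2\pi i\langle a_{(gh)^{-1}}b(g),x\rangle}$, and then pass from the generators $\lambda(g)$ to $\mathcal{A}_G$ by linearity (and, should one want the analogous statement on all of $\mathcal{L}(G)$, by weak-$*$ continuity). I do not anticipate any difficulty beyond this index-tracking; in fact the same identity re-proves the intertwining Lemma \ref{ide} at once.
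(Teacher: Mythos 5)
Your argument is correct, and it is in essence the paper's own proof made explicit. The paper reindexes the sum and introduces the map $\zeta:\sum_{g,h}a_{g,h}\otimes e_{g,h}\mapsto \sum_{g,h}e^{2\pi i\langle a_{g^{-1}}b(gh^{-1}),\cdot\rangle}a_{g,h}\otimes e_{g,h}$, checks via the cocycle law that it is an injective $*$-homomorphism, and concludes by the contractivity of point evaluation $\|\zeta(y)(x_0)\|\leq\|\zeta(y)\|_\infty\leq\|y\|$. Since the cocycle law gives $a_{g^{-1}}b(gh^{-1})=b(h^{-1})-b(g^{-1})$, the paper's $\zeta$ is precisely $y\mapsto V(\cdot)^*yV(\cdot)$ for your diagonal unitary $V$; the multiplicativity of $\zeta$ that the paper asserts is exactly the telescoping of phases you verify by hand. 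What your formulation buys is twofold: it upgrades the inequality to the identity $\int_{B(x_0,r)}|\eta_2(f)|^2\,dx=V(x_0)^*\big(\int_{B(r)}|\eta_2(f)|^2\,dx\big)V(x_0)$, hence equality of norms, and (together with rotation invariance of the ball and $a_{gh}\in O(\mathbb{R}^n)$) it re-derives the intertwining identity of Lemma \ref{ide} from the single relation $\eta_2(f)(x)=V(x)^*\eta_1(f)V(x)$. All the computations you flag as needing care (the cocycle identity $a_{(gh)^{-1}}b(g)=b(h^{-1})-b((gh)^{-1})$, the matrix-unit bookkeeping, the commutation $V(x+x_0)=V(x_0)V(x)$) check out.
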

\begin{proof}
Note that for any $f=\sum_{g\in A}c_g\lambda(g)\in \mathcal{A}_G$,
\begin{align*}
\left\|\int_{B(x_0,r)}|\eta_2(f)(x)|^2dx\right\|_{B(\ell_2(G))}
&=\left\|\int_{B(0,r)}\Big|\sum_{g\in A}\sum_{h\in G}c_ge^{2\pi i\langle  a_{(gh)^{-1}}b(g),x+x_0\rangle}\otimes e_{gh,h}\Big|^2dx\right\|_{B(\ell_2(G))}\\
&=\left\|\int_{B(0,r)}\Big|\sum_{g\in A}\sum_{h\in G}c_{gh^{-1}}e^{2\pi i\langle  a_{g^{-1}}b(gh^{-1}),x+x_0\rangle}\otimes e_{g,h}\Big|^2dx\right\|_{B(\ell_2(G))}={\rm RHS}.
\end{align*}
To continue, consider the map $$\zeta:B(\ell_2(G))\ni\sum_{g,h\in G}a_{g,h}\otimes e_{g,h}\mapsto \sum_{g,h\in G}e^{2\pi i\langle a_{g^{-1}}b(gh^{-1}),\cdot\rangle}a_{g,h}\otimes e_{g,h}\in L_\infty(\mathbb{R}^n)\botimes B(\ell_2(G)) .$$
Recalling that $(\mathbb{R}^n,a,b)$ satisfies the cocycle law $b(gh)=a_g (b(h))+b(g)$, one can deduce that $\zeta$ is an injective $*$-homomorphism: $B(\ell_2(G))\rightarrow L_\infty(\mathbb{R}^n)\botimes B(\ell_2(G))$. Therefore,
\begin{align*}
{\rm RHS}
&=\left\|\zeta\Bigg(\int_{B(0,r)}\Big|\sum_{g\in A}\sum_{h\in G}c_{gh^{-1}}e^{2\pi i\langle  a_{g^{-1}}b(gh^{-1}),x\rangle}\otimes e_{g,h}\Big|^2dx\Bigg)(x_0)\right\|_{B(\ell_2(G))}\\
&\leq \left\|\zeta\Bigg(\int_{B(0,r)}\Big|\sum_{g\in A}\sum_{h\in G}c_{gh^{-1}}e^{2\pi i\langle  a_{g^{-1}}b(gh^{-1}),x\rangle}\otimes e_{g,h}\Big|^2dx\Bigg)\right\|_{L_\infty(\mathbb{R}^n)\botimes B(\ell_2(G))}\\
&\leq \left\|\int_{B(0,r)}\Big|\sum_{g\in A}\sum_{h\in G}c_{gh^{-1}}e^{2\pi i\langle  a_{g^{-1}}b(gh^{-1}),x\rangle}\otimes e_{g,h}\Big|^2dx\right\|_{B(\ell_2(G))}=\left\|\int_{B(r)}|\eta_2(f)(x)|^2dx\right\|_{B(\ell_2(G))}.
\end{align*}
This ends the proof of Lemma \ref{avercon}.
\end{proof}
Now we apply inequality \eqref{deccc} and Lemma \ref{avercon} to deduce that
\begin{align*}
\big\|\tilde{\mathscr{R}}_{j,t}(|\xi|^2)\big\|_{\mathcal{L}(G)}
&=\big\|\mathscr{R}_{j,t}(|\xi|^2)\big\|_{B(\ell_2(G))}\\
&\leq C2^{-jn}\sum_{\ell=1}^{d_{j,n}}\bigg\|\fint_{B_{j,\ell}(t)}\eta_2(|\xi|^2)(y)dy\bigg\|_{B(\ell_2(G))}\\
&=C2^{-jn}\sum_{\ell=1}^{d_{j,n}}\bigg\|\fint_{B_{j,\ell}(t)}|\eta_2(\xi)(y)|^2dy\bigg\|_{B(\ell_2(G))}\\
&\leq C\big\|\mathscr{R}_{\sqrt{t}}(|\xi|^2)\big\|_{B(\ell_2(G))}=C\big\|\tilde{\mathscr{R}}_{\sqrt{t}}(|\xi|^2)\big\|_{\mathcal{L}(G)}.
\end{align*}
Therefore,
$$\mathcal{Q}_{\mathcal{L}(G)}=\left\{\left(\tilde{\mathscr{R}}_{j,t},2^{jn/2}\exp(-c2^{2j})\otimes {\bf 1}_{\mathcal{L}(G)}\right):j\in\mathbb{N},t>0\right\}$$
is a P-metric for $\mathcal{L}(G)$.

Theorem \ref{main} applied to group von Neumann algebra can be formulated as follows.
\begin{theorem}\label{aser}
Let $G$ be a discrete group equipped with any length $\psi:G\rightarrow \mathbb{R}_+$ satisfying dim$\mathcal{H}_\psi=n<\infty$. For any $s\geq 0$, let $T_{m_s}:\sum_g \hat{f}(g)\lambda(g)\mapsto \sum_g m_s(\psi(g))\hat{f}(g)\lambda(g)$, where $m_s(\psi(g))=e^{it\psi(g)}(1+\psi(g))^{-s}$. Then there exists a  constant $C=C(n)>0$ independent of $t$ such that
\begin{eqnarray}
  \left\|T_{m_{s}} (f)\right\|_{BMO_{\mathcal{S}_\psi}(\mathcal{L}(G))} \leq C (1+|t|)^{n/2} \|f\|_{\mathcal{L}(G)}, \  {\rm for }\ s\geq \frac{n}{2}.
\end{eqnarray}
Furthermore, for any $1<p<\infty$, there exists a  constant $C=C(n,p)>0$ independent of $t$ such that
\begin{eqnarray}
 \left\| T_{m_{s}}(f)\right\|_{L_p(\hat{G})} \leq C (1+|t|)^{\sigma_p} \|f\|_{L_p(\hat{G})}, \  {\rm for}
  \ s\geq \sigma_p=n\Big|{1\over  2}-{1\over  p}\Big|.
\end{eqnarray}
\end{theorem}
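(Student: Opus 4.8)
The plan is to fit the present situation into the framework of Theorem \ref{main}, i.e.\ to check the algebraic, analytic, topological and operator conditions for the data constructed just above, after which the two displayed estimates follow at once (the $L_p$ one additionally using that $\mathcal{S}_\psi=\{S_{\psi,t}\}_{t\geq 0}$ is a regular Markov semigroup, see \cite{MR4178915}). Before that I would record the comparison with \cite[Theorem D]{MR3283931}: since $m_s(\psi(g))=e^{it\psi(g)}(1+\psi(g))^{-s}$, the operator $T_{m_s}$ is the $\psi$-radial Fourier multiplier $m(L_\psi)$ with $m(u)=e^{itu}(1+u)^{-s}$, so our conclusion is exactly the sharp endpoint bound for $e^{itL_\psi}(I+L_\psi)^{-s}$ on $\mathcal{L}(G)$ and gives an alternative proof of that particular radial case.

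For the setup I take $\mathcal{M}=\mathcal{L}(G)$, $\mathcal{N}_\rho=L_\infty(\mathbb{R}^n)\botimes B(\ell_2(G))$, $\rho_1=\eta_1$, $\rho_2=\eta_2$ (the injective $*$-homomorphisms built above), $E_\rho=\int\otimes id_{B(\ell_2(G))}$, and $q_r$ the projection $\chi_{B(r)}\otimes{\bf 1}_{B(\ell_2(G))}$; the amplified spectral multiplier is $\pi(F(L_\psi)):=F(-\Delta\otimes id_{B(\ell_2(G))})$, and the weak-$*$ dense subalgebra $\mathcal{A}_\mathcal{M}$ is the algebra $\mathcal{A}_G$ of trigonometric polynomials. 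The intertwining identity \eqref{transfer} is precisely Lemma \ref{verfff}, while Lemma \ref{ide} plays the analogous role for the averaging maps. The algebraic conditions (ALi) and (ALii) then hold because $q_r$, $E_\rho(q_r)$ and $E_\rho(q_r)^{-1/2}$ are all of the form (a function of $y\in\mathbb{R}^n$)$\otimes{\bf 1}$ and hence commute with one another, exactly as in the quantum Euclidean and matrix-algebra cases.

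Next I would verify the analytic condition (ANi): using Lemma \ref{avercon} one covers $B(r_1)$ by at most $c(r_1/r_2)^n$ balls of radius $r_2$ and repeats the computation leading to \eqref{yinyin}, which yields the strong doubling inequality with $D=0$ and P-metric dimension $n$. The operator conditions (Oi) and (Oii) are the special cases of \eqref{veroi} and \eqref{veroii} obtained by choosing $X=\mathbb{R}^n$, $L=-\Delta$ and coefficient algebra $B(\ell_2(G))$; these were already established in the proof of Theorem \ref{operatorthm}, so they carry over without change.

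The main obstacle is the topological pair (Ti)--(Tii), just as for $B(\ell_2)$: $\eta_2(\mathcal{A}_G)$ need not be contained in $L_2^c(\mathbb{R}^n)\botimes B(\ell_2(G))$, so Lemma \ref{apprx} does not apply verbatim. I would instead follow the proof of Theorem \ref{bl2} and reduce (Ti)--(Tii), after identifying $B(\ell_2(G))$ as a subalgebra of $B(L_2(B(\ell_2(G))))$, to the two scalar limits for $\lambda(g)\in\mathcal{A}_G$, $b\in L_2(B(\ell_2(G)))$, $r>0$, and Schwartz symbols $F_j\to F$ pointwise with $\sup_j\|F_j\|_\infty<\infty$:
\[
\lim_{s\to\infty}{\rm Tr}\Big(b^*\!\int_{B(0,r)}\big|(F(-\Delta)\otimes id)(\eta_2(\lambda(g))\chi_{B(0,s)^c})(y)\big|^2 dy\, b\Big)=0
\]
and
\[
\lim_{j\to\infty}{\rm Tr}\Big(b^*\!\int_{B(0,r)}\big|((F_j(-\Delta)-F(-\Delta))\otimes id)(\eta_2(\lambda(g)))(y)\big|^2 dy\, b\Big)=0 .
\]
Both follow from dominated convergence once one uses $\eta_2(\lambda(g))=\sum_{h\in G}e^{2\pi i\langle a_{(gh)^{-1}}b(g),\cdot\rangle}\otimes e_{gh,h}$, the identity $|a_{(gh)^{-1}}b(g)|^2=\psi(g)$ (so each matrix entry is a single Fourier character) and the rapid decay of $\mathcal{F}^{-1}(F(|\cdot|^2))$, exactly as in \eqref{linmm} and \eqref{linmmmm}. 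With all conditions checked, Theorem \ref{main} gives the $BMO_{\mathcal{S}_\psi}(\mathcal{L}(G))$ estimate, and its $L_p$-interpolation part — applicable since $\mathcal{S}_\psi$ is regular — gives the $L_p(\hat{G})$ estimate.
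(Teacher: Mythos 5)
Your proposal is correct and follows essentially the same route as the paper's own proof: the same choices of $\mathcal{N}_\rho$, $\rho_1=\eta_1$, $\rho_2=\eta_2$, $E_\rho$, $q_r$ and $\pi$, the same verification of (ALi)--(ALii), (ANi) via the covering argument of \eqref{yinyin}, (Oi)--(Oii) as special cases of \eqref{veroi}--\eqref{veroii}, and the same reduction of (Ti)--(Tii) to the two dominated-convergence limits using the explicit form of $\eta_2(\lambda(g))$ and $|a_{(gh)^{-1}}b(g)|^2=\psi(g)$, before concluding with Theorem \ref{main} and the regularity of $\mathcal{S}_\psi$ for the $L_p$ statement.
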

\begin{proof}
The theorem can be shown by a general principle in \cite[Theorem D]{MR3283931}, which establishes a connection between $L_\infty(\mathbb{R}^n)\rightarrow BMO(\mathbb{R}^n)$ boundedness of radial Fourier multiplier over $\mathbb{R}^n$ and $\mathcal{L}(G)\rightarrow BMO(S_\psi)$ boundedness of radial Fourier multiplier over discrete group von Neumann algebra $\mathcal{L}(G)$ with dim$\mathcal{H}_\psi=n$. But here we will provide a different proof via our algebraic approach.


To show the BMO boundedness of $T_{m_s}$, we will modify the proof in Theorem \ref{qusch} by providing an algebraic skeleton for $\mathcal{L}(G)$.  To begin with, we choose $\mathcal{M}=\mathcal{L}(G)$, $\mathcal{N}_\rho= L_\infty(\mathbb{R}^n)\botimes B(\ell_2(G))$ in Theorem \ref{main} and then consider two $*$-homomorphisms $\rho_1,\rho_2: \mathcal{L}(G)\rightarrow \mathcal{N}_\rho= L_\infty(\mathbb{R}^n)\botimes B(\ell_2(G))$ defined by
$\rho_1(\lambda(g))=\sum_{h\in G}{\bf 1}\otimes e_{gh,h}$
 and
 $\rho_2(\lambda(g))=\eta_2(\lambda(g))$,
 respectively. Besides, for any function $F\in C_b^\infty$, define amplified spectral multiplier $\pi(F(L_\psi))$ by $\pi(F(L_\psi)):=F(-\Delta\otimes id_{B(\ell_2(G))})$, then it is direct to see that $\pi(F(L_\psi)):V_{\rho}\rightarrow V_{\rho}$ satisfying $$\pi(F(L_\psi))\circ\rho_2=\rho_2\circ F(L_\psi)\ {\rm on}\ \mathcal{A}_G.$$
 Next, choose $E_{\rho}=\int\otimes id_{B(\ell_2(G))}$, $q_r$ be the projection $\chi_{B(r)}\otimes id_{B(\ell_2(G))}$. then the algebraic conditions (ALi)-(ALii) follows from commutativity and one can follow a similar procedure as verifying \eqref{yinyin} to verify the condition (ANi).  Observe that the conditions (Oi), (Oii) in this setting are special cases of \eqref{veroi} and \eqref{veroii}, respectively, in which we choose  $\mathcal{M}=B(\ell_2(G))$, $X=\mathbb{R}^n$ and $L=-\Delta$.

 Note that $\eta_2(\mathcal{L}(G))\nsubseteq L_2^c(\mathbb{R}^n)\botimes B(\ell_2(G))$. Therefore, to verify the topological conditions (Ti) and (Tii), by Lemma \ref{apprx} with $T$, $\mathcal{M}$, $X$ and $x$ chosen to be $F(-\Delta)$, $B(\ell_2(G))$, $\mathbb{R}^n$ and $0$, respectively, it remains to verify that for any $f=\sum_gc_g\lambda(g)\in\mathcal{A}_G$, any $\kappa\in S_2(\ell_2(G))$, and any Schwartz functions $F_j$, $F$ satisfying $F_j\rightarrow F$ pointwise with $\sup\limits_{j}\|F_j\|_\infty<+\infty$,
\begin{align}\label{linmmsss}
\lim_{s\rightarrow \infty}{\rm Tr}\Bigg(\kappa^*\int_{B(0,r)}\Big|(F(-\Delta)\otimes id_{B(\ell_2(G))})\Big(\sum_{g\in A}\sum_{h\in G}c_ge^{2\pi i\langle  a_{(gh)^{-1}}b(g),\cdot\rangle}\chi_{B(0,s)^c}(\cdot)\otimes e_{gh,h}\Big)(y)\Big|^2dy\kappa\Bigg)=0
\end{align}
and
\begin{align}\label{linmmsssss}
\lim_{s\rightarrow \infty}{\rm Tr}\Bigg(\kappa^*\int_{B(0,r)}\Big|((F_j(-\Delta)-F(-\Delta))\otimes id_{B(\ell_2(G))})\Big(\sum_{\substack{g\in A\\h\in G}}c_ge^{2\pi i\langle  a_{(gh)^{-1}}b(g),\cdot\rangle}\otimes e_{gh,h}\Big)(y)\Big|^2dy\kappa\Bigg)=0.
\end{align}

To show \eqref{linmmsss}, by domination convergence theorem, for any $\kappa=\sum_{k,\ell}c_{k,\ell}e_{k,\ell}\in \mathcal{A}_{B(\ell_2(G))}$,
\begin{align*}
&{\rm Tr}\left(\kappa^*\int_{B(0,r)}\Big|(F(-\Delta)\otimes id_{B(\ell_2(G))})\Big(\sum_{h\in G}e^{2\pi i\langle  a_{(gh)^{-1}}b(g),\cdot\rangle}\chi_{B(0,s)^c}(\cdot)\otimes e_{gh,h}\Big)(y)\Big|^2dy\kappa\right)\\
&={\rm Tr}\left(\kappa^*\int_{B(0,r)}\sum_{h\in G}\Big|F(-\Delta)(e^{2\pi i\langle  a_{(gh)^{-1}}b(g),\cdot\rangle}\chi_{B(0,s)^c}(\cdot)) (y)\Big|^2e_{h,h}dy\kappa\right)\\
&=\int_{B(0,r)}\sum_{k,\ell}|c_{k,\ell}|^2\Bigg(\int_{B(0,s)^c}\mathcal{F}^{-1}(F(|\cdot|^2))(y-z)e^{2\pi i\langle  a_{(g\ell)^{-1}}b(g),z\rangle} dz\Bigg)^2dy\rightarrow 0,\ {\rm as}\ s\rightarrow \infty.
\end{align*}
  This, together with the density of $\mathcal{A}_{B(\ell_2(G))}$ on $S_2(\ell_2(G))$, implies \eqref{linmmsss}.

To show \eqref{linmmsssss}, a direct calculation yields
\begin{align*}
&{\rm Tr}\Bigg(\kappa^*\int_{B(0,r)}\Big|((F_j(-\Delta)-F(-\Delta))\otimes id_{B(\ell_2(G))})\Big(\sum_{h\in G}e^{2\pi i\langle  a_{(gh)^{-1}}b(g),\cdot\rangle}\otimes e_{gh,h}\Big)(y)\Big|^2dy\kappa\Bigg)\\
&=r^n\sum_{k,\ell}|c_{k,\ell}|^2\big|(F_j-F)(|\psi(g)|^2)\big|^2\rightarrow 0,\ {\rm as}\ j\rightarrow \infty.
\end{align*}

Therefore, all the conditions have been verified.
To obtain the $L_p$ boundedness assertion, it suffices to note that the semigroup $\mathcal{S}_\psi$ is regular (see \cite{MR3283931}). This ends the proof of Theorem \ref{aser}.
\end{proof}
\bigskip
\bigskip
\bigskip
\bigskip
 \noindent
 {\bf Acknowledgments:}

The authors are supported by National Natural Science Foundation of China (No. 12071355).

%

\end{document}